\newtheorem{thm}{Theorem}[section]
\newtheorem{lemma}[thm]{Lemma}
\newtheorem{cor}[thm]{Corollary}
\theoremstyle{definition}
\newtheorem{remark}[thm]{Remark}
\def\XXint#1#2#3{{\setbox0=\hbox{$#1{#2#3}{\int}$}
         \vcenter{\hbox{$#2#3$}}\kern-.5\wd0}}
\def\R{\mathbb{R}}
\def\e{\varepsilon}
\numberwithin{equation}{section}
\begin{document}

 \bibliographystyle{amsplain}

\title{Large-Scale Lipschitz Estimates 
for Elliptic Systems\\  with Periodic  High-Contrast Coefficients}

\author{
Zhongwei Shen\thanks{Supported in part by NSF grant DMS-1856235.}}
\date{}
\maketitle

\begin{abstract}

This paper is concerned with the large-scale regularity in the homogenization of
 elliptic systems of elasticity with  periodic high-contrast coefficients.
 We obtain the large-scale Lipschitz estimate that is uniform with respect to the contrast ratio
 $\delta^2$ for $0<\delta<\infty$.
 Our study also covers the case of soft inclusions  ($\delta=0$)  as well as  the case of  stiff inclusions
($\delta=\infty$).
The large-scale Lipschitz estimate, together with classical local estimates,
allows us to establish explicit bounds for the matrix of fundamental solutions and its derivatives.

\medskip

\noindent{\it Keywords}:  Large-scale  Estimate; High-contrast Coefficient; Perforated Domains.

\medskip

\noindent {\it MR (2020) Subject Classification}: 35B27;  74Q05.

\end{abstract}


\section{\bf Introduction}\label{section-1}

This paper is concerned with large-scale  regularity estimates in the homogenization of elliptic systems  of elasticity with 
periodic high-contrast coefficients.
Let $\omega$ be a connected and unbounded open set in $\mathbb{R}^d$. Assume that $\omega$ is 1-periodic; i.e.,
its characteristic function  is periodic with respect to $\mathbb{Z}^d$.
We also assume that 
each of connected components of
$
\mathbb{R}^d\setminus \omega 
$
is the closure of  a bounded open set $F_k$ with Lipschitz boundary,
and  that
\begin{equation}\label{dis}
\min_{k\neq \ell} \text{dist} (F_k, F_\ell ) >0.
\end{equation}
For $0< \delta < \infty$, define
\begin{equation}\label{Lambda-e}
\Lambda_{\delta} (x)
=\left\{
\aligned
& \delta  & \quad & \text{ if } x\in F=\cup_k F_k ,\\
& 1 & \quad & \text{ if } x\notin F.
\endaligned
\right.
\end{equation}
We are interested in the large-scale regularity estimates, that are uniform in $\delta>0$, for the elliptic operator 
\begin{equation}\label{op}
\mathcal{L}_\delta 
=-\text{\rm div} \big(\Lambda_{\delta^2} A\nabla \big).
\end{equation}
Here and thereafter  the coefficient matrix (tensor) $A=A(x)= (a_{ij}^{\alpha\beta} (x))$,
with $1\le \alpha, \beta, i, j \le d$, is assumed to be real, 
bounded measurable, 1-periodic,  and to satisfy the elasticity condition,
\begin{equation}\label{ellipticity}
\aligned
& a_{ij}^{\alpha\beta} (x) = a_{ji}^{\beta\alpha} (x) = a_{\alpha j}^{i\beta} (x)  ,\\
& \kappa_1 |\xi|^2 \le a_{ij}^{\alpha\beta} \xi_i^\alpha \xi_j^\beta \le \kappa_2 |\xi|^2
\endaligned
\end{equation}
for any symmetric matrix $\xi= (\xi_i^\alpha)\in \mathbb{R}^{d\times d}$,
where $\kappa_1$, $\kappa_2$ are positive constants.
Under these assumptions we will  show that if $u\in H^1(Q_R; \R^d)$ is a weak solution of
$\mathcal{L}_\delta (u)=0$ in a cube $Q_R= (-R/2, R/2)^d$ of size $R$ for some $R\ge 4$, then
\begin{equation}\label{Lip-e}
\sup_{1\le r\le R-3}
\left(\fint_{Q_r} |\nabla u|^2\right)^{1/2}
\le C \left(\fint_{Q_R\cap \omega} |\nabla u|^2 \right)^{1/2},
\end{equation}
with a constant $C$ independent of $R$ and $\delta$.
Let $ Du $ denote the symmetric gradient of $u$; i.e., 
$$
Du =  \big( \nabla u + (\nabla u)^T\big)/2,
$$
 where
$(\nabla u)^T$ denotes  the transpose of $\nabla u$. We also prove that for $R\ge 4$,
\begin{equation}\label{Lip-e-1}
\sup_{1\le r\le R-3}
\left(\fint_{Q_r } |D u|^2\right)^{1/2}
\le C \left(\fint_{Q_R\cap \omega } |D u|^2 \right)^{1/2}.
\end{equation}
We remark that the operator $\mathcal{L}_{\delta}$ arises naturally  in the modeling of acoustic propagations in porous media, diffusion processes  in highly heterogeneous media, and
 inclusions in composite materials \cite{Arbogast-1990, Allaire-1992, OSY-1992, JKO-1993}.

In the case $\delta=1$,
the regularity estimates for  the elliptic system $-\text{\rm div}(A(x/\e)\nabla)=f$
in the homogenization theory 
have been studied extensively in recent years (in this paper we have rescaled the equation so that the  microscopic scale $\e=1$ and the domain is large).
Using a compactness method,
the interior Lipschitz estimate and  the boundary Lipschitz estimate for the Dirichlet problem in
a $C^{1, \alpha}$ domain were established by M. Avellaneda and F. Lin  in a seminal work \cite{AL-1987}.
The boundary Lipschitz estimate for the Neumann problem in a $C^{1, \alpha}$ domain 
was obtained in \cite{KLS-2013-N}.
We refer the reader to  \cite{Shen-book} for further references on periodic homogenization, and to \cite{Armstrong-book}
for related work on the  large-scale regularity  in stochastic homogenization.

In this paper we will be concerned with the case $\delta \neq 1$,
where, in the simpler scalar case,
$\delta^2$ represents the conductivity ratio (or the ratio of diffusion coefficients) of the disconnected  {\it inclusions} $F=\cup_k F_k$ to the connected  \text{\it matrix}  $\omega$.
Notice that the operator $\mathcal{L}_{\delta}$  is elliptic, but neither  uniformly in $\delta \in (0, 1)$ nor in $\delta \in (1, \infty)$.
 We mention that in the scalar case  with  $0\le \delta<1$, $A=I$ and 
 $\omega$  being sufficiently smooth, using the compactness method in \cite{AL-1987}, 
the $W^{1, p}$ and Lipschitz estimates were  obtained  by L.-M. Yeh \cite{Yeh-2010, Yeh-2011, Yeh-2015, Yeh-2016}.
Also see earlier work in \cite{Schweizer-2000, Masmoudi-2004} for related uniform estimates in  the case $\delta=0$.
In \cite{Chase-Russell-2017}  B. Russell established the  large-scale interior Lipschitz estimate
for the system of elasticity  with bounded measurable coefficients in the case  $\delta=0$, using an approximation method
originated in \cite{Armstrong-Smart-2016}. 
The case $0<\delta<1$ was treated in \cite{Chase-Russell-2018}.
In the stochastic setting with $\delta=0$, S. Armstrong and P. Dario \cite{Armstrong-2018} obtained 
quantitative homogenization and large-scale regularity results for the random conductance model on a supercritical percolation.

The following is one of the main results of this paper.

\begin{thm}\label{main-thm-1}
Let $0\le \delta\le \infty$.
Assume that $A$ satisfies the elasticity condition \eqref{ellipticity} and is 1-periodic.
Let $u\in H^1(Q_R; \R^d)$ be a weak solution of $\mathcal{L}_\delta (u)=0$ in $Q_R$ for some $R\ge  4$.
Then \eqref{Lip-e}  and \eqref{Lip-e-1}
hold for some constant $C$ depending only on $d$, $\kappa_1$, $\kappa_2$, and $\omega$.
\end{thm}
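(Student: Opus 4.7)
The plan is to adapt the excess--decay (quantitative approximation) scheme of Armstrong--Smart, as employed in the soft-inclusion case by Chase--Russell, but organized so that every constant remains bounded as $\delta \to 0$ and as $\delta \to \infty$. A compactness argument in the spirit of Avellaneda--Lin is unavailable here because $\mathcal{L}_\delta$ is not uniformly elliptic in either limit, so the whole argument must be made quantitative from the start.

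First I would split the parameter range $[0,\infty]$ into $[0,1]$ and $[1,\infty]$ and identify the appropriate homogenized operator $\widehat{\mathcal{L}}$ in each case. For $\delta\in[0,1]$ the natural limit is the effective operator of the soft-inclusion problem, built from correctors $\chi^\gamma$ defined on the connected matrix $\omega$ with the natural conormal condition on $\partial\omega$. For $\delta\in[1,\infty]$, multiplying $\mathcal{L}_\delta$ by $\delta^{-2}$ and interchanging the roles of $\omega$ and $F$ reduces the problem to the same soft-type picture for the dual operator, so that the case $\delta=\infty$ corresponds to solutions behaving like rigid motions on each $F_k$. In both regimes the homogenized operator is uniformly elliptic in the sense of \eqref{ellipticity}, and its solutions satisfy classical interior $C^{1,\alpha}$ estimates --- this is the ``regularity reserve'' that the iteration will consume. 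At the same time I would establish the basic $\delta$-uniform toolbox: a Caccioppoli inequality for $\mathcal{L}_\delta$ whose right-hand side involves only $\int_\omega|\nabla u|^2$, a Meyers-type reverse H\"older improvement on $\omega$, an extension operator from $\omega$ to $\mathbb{R}^d$ with $\delta$-independent norm, and a scale-invariant Korn inequality on the perforated cube $Q_r\cap\omega$ (the last being what allows \eqref{Lip-e-1} to be treated in parallel with \eqref{Lip-e}).

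The core step is an approximation lemma: for $u$ solving $\mathcal{L}_\delta u = 0$ in $Q_R$ and for each $r\in[1,R/4]$ there exists $v$ solving $\widehat{\mathcal{L}} v = 0$ in $Q_r$ with
\begin{equation*}
\left(\fint_{Q_r} |u-v|^2\right)^{1/2}
\le C\, r\, \eta(r/R)\, \left(\fint_{Q_R\cap\omega} |\nabla u|^2\right)^{1/2},
\end{equation*}
for some modulus $\eta(t)\to 0$ as $t\to 0$ that is independent of $\delta$. The construction would take an affine approximation of $u$ on $Q_r$ (in the $\omega$-mean-square sense), correct it by the correctors $\chi^\gamma$ of $\widehat{\mathcal{L}}$, and test against the difference to estimate the homogenization error using the flux corrector together with the Caccioppoli estimate; the point is that the only global quantity appearing on the right is the $\omega$-norm of $\nabla u$. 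Combining this lemma with the $C^{1,\alpha}$ estimate for $\widehat{\mathcal{L}}$-solutions applied at scale $\theta r$, one obtains a one-step excess-decay inequality of the form
\begin{equation*}
H(\theta r) \le \theta^{2\alpha} H(r) + C\,\eta(r/R)^2 \fint_{Q_R\cap\omega}|\nabla u|^2,
\qquad H(\rho) := \inf_P \fint_{Q_\rho\cap\omega} |\nabla u - P|^2,
\end{equation*}
the infimum being taken over constant symmetric strains $P$. Choosing $\theta$ small and iterating from $\rho = R/4$ down to $\rho = 1$ yields a bound on $H(r)$ by the right-hand side of \eqref{Lip-e} for all $r\in[1,R-3]$, and a standard Campanato argument upgrades this to \eqref{Lip-e}. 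The proof of \eqref{Lip-e-1} is parallel, substituting $\nabla$ by $D$ and invoking the Korn inequality on $\omega$ from the preparatory step.

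The main obstacle is the $\delta$-uniformity throughout, and it concentrates in two places. First, in the approximation lemma: one needs the homogenization error to be controlled by $\fint_{Q_R\cap\omega}|\nabla u|^2$ with a constant that neither blows up as $\delta\to 0$ (where $\nabla u$ on $F$ becomes essentially unconstrained) nor as $\delta\to\infty$ (where $\nabla u$ on $F$ must degenerate to a rigid motion). This forces the correctors for the $\delta=0$ and $\delta=\infty$ limits, and their flux counterparts, to be constructed so that they extend across inclusions in a way compatible with the $\omega$-only right-hand side; the finite-$\delta$ case must then be handled as a perturbation that is quantitatively controlled by the extension operator and by $\delta$-uniform energy inequalities. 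Second, the scale-invariant Korn inequality on perforated domains needed for \eqref{Lip-e-1} is delicate because Korn's constant can in principle depend on the geometry of the intersections $\partial\omega\cap\partial Q_r$; the periodicity and the separation hypothesis \eqref{dis} on the $F_k$ are what make it possible to obtain a Korn constant uniform in $r\ge 1$ and in $\delta$. Together these two ingredients are the technical heart of the argument.
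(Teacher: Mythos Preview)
Your plan is essentially the Armstrong--Smart/Chase--Russell scheme, and for $0\le\delta\le 1$ it is a reasonable (if heavy) route. The difficulty is in the regime $1<\delta\le\infty$, where two of your ingredients break down.

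First, the duality reduction you propose --- multiply by $\delta^{-2}$ and swap the roles of $\omega$ and $F$ --- does not produce ``the same soft-type picture''. The soft-inclusion machinery depends on the soft phase being a union of bounded, well-separated components across which one can extend from the surrounding connected matrix. After your rescaling the soft phase is $\omega$, which is the connected unbounded set, and the ``matrix'' is $F$, which is a disjoint union of bounded pieces; there is no extension operator from $F$ to $\mathbb{R}^d$ that controls gradients on $\omega$, and no homogenized operator associated to a disconnected matrix. So the reduction does not go through, and consequently neither the correctors nor the approximation lemma you describe are available uniformly for large $\delta$.

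Second, and relatedly, the ``basic $\delta$-uniform toolbox'' you list includes a Caccioppoli inequality with right-hand side controlled by the $\omega$-norm of $\nabla u$. For $\delta>1$ such an inequality is not known; the standard test-function argument picks up a factor $\delta^2$ on the $F$-contribution (see Theorem~\ref{thm-2.3}). The paper in fact works around this by proving only the weaker estimate
\[
\int_{Q_{R/2}}|\nabla u|^2 \le \frac{C_\ell}{R^2}\int_{Q_R}|u|^2 + \frac{C_\ell}{R^{2\ell}}\int_{Q_R}|\nabla u|^2,
\]
obtained via a delicate construction of test functions that are rigid on each $F_k$ (Lemma~\ref{lemma-3.2}, Theorem~\ref{thm-3.3}). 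Your excess-decay iteration would need the full Caccioppoli inequality, and there is no indication how to get it.

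The paper's proof avoids both obstacles by abandoning the approximation/homogenized-operator route entirely. It uses instead the Armstrong--Kuusi--Smart discrete-derivative method: since $\Delta_j u$ is again a solution (by periodicity), one can iterate the (weak) Caccioppoli inequality on $\partial^k u$, feed the result into a discrete Sobolev inequality on $\mathbb{Z}^d$, and deduce control of $\inf_E r^{-1}\|u-E\|_{L^2(Q_r)}$ directly, with no reference to correctors, flux correctors, or convergence rates. This is what makes the argument go through uniformly across $0\le\delta\le\infty$; the homogenized operator never appears.
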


Note that Theorem \ref{main-thm-1}  includes the  limiting  cases of periodically perforated domains: $\delta=0$ and $\delta=\infty$.
In the case $\delta=0$, which is referred to as the soft inclusions \cite{JKO-1993}, we call  $u\in H^1(\Omega; \R^d)$ 
is  a weak solution of $\mathcal{L}_0 (u)=f\chi_\omega $ in $\Omega$, if 
$$
\int_{\Omega\cap \omega} A\nabla u \cdot \nabla v  \, dx =\int_{\Omega\cap \omega}  f\cdot v \, dx
$$
for any $ v \in H_0^1(\Omega; \R^d)$.
Formally, this means 
that $ -\text{\rm div} (A\nabla u)=f$ in $\Omega \cap \omega$ and $\big(\frac{\partial u}{\partial \nu }\big)_-=0$ on 
$\Omega \cap \partial \omega$, where 
$
 \big( \frac{\partial u}{\partial \nu } \big)_- = n \cdot A (\nabla u )_-
$ denotes  the conormal derivative  taken from $\omega$ and 
$n$  the outward unit normal to $\partial F$.
For convenience we will also assume that $u$ is a weak solution of
$\text{\rm div} (A\nabla u)=0$ in $\Omega \cap F$.
In the case $\delta=\infty$, which is referred to as the stiff inclusions \cite{JKO-1993}, 
 a function $u$ in $H^1(\Omega; \R^d)$ is called a weak solution of
$\mathcal{L}_\infty (u)=f$ in $\Omega$ if
$ Du =0   \text{ in }\Omega \cap F$, and 
$$
\int_{\Omega\cap \omega} A\nabla u \cdot \nabla v \, dx =\int_\Omega f \cdot v\, dx
$$
for any $ v \in H_0^1(\Omega; \mathbb{R}^d)$ with
$D v =0$ in $\Omega  \cap F$.
This  implies that $-\text{\rm div}(A\nabla u)=f$ in $\Omega\cap \omega$ and that if $\overline{F}_k\subset \Omega$,
$$
\int_{\partial F_k}  \Big( \frac{\partial u}{\partial \nu}\Big)_-  \cdot \phi\, d\sigma =-\int_{F_k}  f \cdot \phi\, dx
$$
for any $\phi\in \mathcal{R}$, the space of rigid displacements.

The  large-scale uniform Lipschitz estimate in Theorem \ref{main-thm-1}, which holds under the assumptions that $A$ is bounded measurable and
$\partial \omega$ is locally Lipschitz, is new in the case $1<\delta\le \infty$,
even when $A$ is constant  and $\omega$ is smooth.
Under the additional conditions that  $\omega$ is locally $C^{1,\alpha}$ and $A$ is H\"older continuous,
\begin{equation}\label{smoothness}
|A(x)-A(y)|\le M_0 |x-y|^\sigma \quad \text{ for any } x, y\in \R^d,
\end{equation} 
where $M_0>  0$ and $\sigma \in (0, 1)$,
we may combine \eqref{Lip-e} with the local Lipschitz estimates for the operator $\mathcal{L}_\delta$
to obtain a true Lipschitz estimate.

\begin{thm}\label{main-thm-2}
Let $0\le \delta\le  \infty$ and $Q(x_0, R)=x_0 + Q_R$.
Assume that $A$ satisfies conditions \eqref{ellipticity}, \eqref{smoothness}, and is 1-periodic.
Also assume that each $F_k$ is a bounded $C^{1, \alpha}$ domain for some
$\alpha \in (0, 1)$.
Let $u\in H^1(Q(x_0, R); \R^d)$ be a weak solution of $\mathcal{L}_\delta (u)=0$ in 
$Q(x_0, R) $ for some $R\ge 4$.
Then 
\begin{equation}\label{f-Lip}
|\nabla u(x_0)|
\le C \left(\fint_{Q(x_0, R)\cap \omega } |\nabla u|^2 \right)^{1/2},
\end{equation}
where $C$ depends only on $d$, $\kappa_1$, $\kappa_2$, $\omega$, and $(\sigma, M_0)$ in \eqref{smoothness}.
\end{thm}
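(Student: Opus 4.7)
The plan is to combine the large-scale estimate \eqref{Lip-e} of Theorem \ref{main-thm-1} (which controls $L^2$ averages of $\nabla u$ on cubes of side length at least $1$) with a classical pointwise Lipschitz estimate at sub-unit scales. The smoothness hypotheses \eqref{smoothness} on $A$ and the $C^{1,\alpha}$ hypothesis on $\partial F_k$ enter only at the second (small-scale) step.

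First I would translate by $x_0$: the translated operator is $-\text{\rm div}(\Lambda_{\delta^2}(\cdot+x_0)A(\cdot+x_0)\nabla)$, whose coefficient is still $1$-periodic with the same ellipticity constants $\kappa_1,\kappa_2$ and whose perforation set $\omega-x_0$ is congruent to $\omega$. Applying Theorem \ref{main-thm-1} at scale $r=1$ (permissible since $R\ge 4$) and translating back produces
\begin{equation*}
\left(\fint_{Q(x_0,1)}|\nabla u|^2\right)^{1/2}\le C\left(\fint_{Q(x_0,R)\cap\omega}|\nabla u|^2\right)^{1/2},
\end{equation*}
with $C$ independent of $\delta$ and $R$.

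Next I would establish a small-scale Lipschitz estimate: for any weak solution $v$ of $\mathcal{L}_\delta v=0$ on $Q(x_0,2)$,
\begin{equation*}
|\nabla v(x_0)|\le C\left(\fint_{Q(x_0,1)}|\nabla v|^2\right)^{1/2},
\end{equation*}
uniformly in $\delta\in[0,\infty]$. If $\text{dist}(x_0,\partial F)$ is bounded below by a fixed positive constant, interior $C^{1,\alpha}$ Schauder estimates for $-\text{\rm div}(A\nabla v)=0$ apply (in the limits $\delta=0,\infty$ the interior condition inside $F_k$ reduces to $-\text{\rm div}(A\nabla v)=0$ or $Dv=0$, both of which produce smooth $v$ inside $F_k$). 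If $x_0$ is within distance $1$ of some $\partial F_k$, the pair $(v|_\omega,v|_F)$ solves a two-phase transmission problem across the $C^{1,\alpha}$ interface $\partial F_k$ with piecewise H\"older coefficients and the matching relation $(\partial v/\partial\nu)_- = \delta^2(\partial v/\partial\nu)_+$; the pointwise $C^{1,\alpha}$ estimate on each side, with a constant independent of the contrast $\delta^2$, is the content of classical transmission regularity (Li--Nirenberg and Li--Vogelius for scalar equations, extended to elasticity systems). In the degenerate cases $\delta\in\{0,\infty\}$ the problem decouples into boundary value problems on $\omega\cap Q(x_0,2)$ (Neumann in the soft case, rigid-displacement constraint in the stiff case) on a $C^{1,\alpha}$ boundary, for which boundary Schauder for the elasticity system, supplemented by Korn's inequality in the stiff case, yields the bound. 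Chaining this with the previous inequality gives \eqref{f-Lip}.

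The main obstacle is the $\delta$-independence of the constant in the small-scale step when $\delta\in(0,\infty)$: a naive Schauder argument with the true ellipticity constants $\min(1,\delta^2)\kappa_1$ and $\max(1,\delta^2)\kappa_2$ would blow up as $\delta\to 0$ or $\delta\to\infty$. Contrast independence hinges on the fact that for a transmission problem across a smooth interface the Schauder regularity on each side depends only on the ellipticity of that side, while the interface conditions transfer the H\"older modulus without introducing the contrast. An alternative route is to verify the estimate first at the endpoints $\delta\in\{0,1,\infty\}$ and then pass to all $\delta\in[0,\infty]$ by a compactness/continuity argument, which is natural in view of the uniform large-scale bound already in hand.
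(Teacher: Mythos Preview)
Your overall strategy---combine the large-scale estimate of Theorem \ref{main-thm-1} on $Q(x_0,1)$ with a $\delta$-uniform local Lipschitz estimate at unit scale---is exactly the paper's approach. The place where your plan is incomplete is the justification of the small-scale step.

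You correctly isolate the obstacle: $\delta$-independence of the local Lipschitz constant. But your two proposed remedies are not yet arguments. Appealing to ``classical transmission regularity (Li--Nirenberg, Li--Vogelius)'' does not suffice as stated: those results, applied as black boxes to a system with piecewise H\"older coefficients, give a constant depending on the global ellipticity ratio of $\Lambda_{\delta^2}A$; the paper explicitly says this covers only $\delta_0\le\delta\le 1$ with $C=C(\delta_0)$. Your heuristic that ``the interface conditions transfer the H\"older modulus without introducing the contrast'' is exactly the right intuition, but it is precisely the statement that must be \emph{proved}. The compactness/continuity alternative is also incomplete: to pass to the limit you would need uniform a priori control in a norm at least as strong as $C^{0,1}$, which is what you are trying to establish.

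The paper's concrete device (Lemmas \ref{local-lemma-1}--\ref{local-lemma-2}) is an iteration that realizes your heuristic. For $0<\delta\ll 1$, on the $\omega$-side one uses the $C^{1,\tau}$ Neumann estimate with data $\partial u/\partial\nu_-$ on the interface, and on the $F$-side the $C^{1,\tau}$ Dirichlet estimate; the transmission relation $\partial u/\partial\nu_-=\delta^2\,\partial u/\partial\nu_+$ then yields
\[
\|\nabla u\|_{C^\tau(Q_s^-)}\le C\delta^2\,\|\nabla u\|_{C^\tau(Q_t^-)}+C(t-s)^{-\frac{d}{2}-1-\tau}\,(\text{data}),\qquad \tfrac12\le s<t<1,
\]
and iterating on nested cubes $s_i=1-2^{-i}$ absorbs the first term once $\delta$ is small enough. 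For $\delta\gg 1$ one interchanges the roles of $Q^\pm$ and $\delta^2\leftrightarrow\delta^{-2}$. The cases $\delta=0$ and $\delta=\infty$ decouple as you describe. With this local estimate in hand, subtracting a constant and using Poincar\'e turns the $L^2$-of-$u$ bound into an $L^2$-of-$\nabla u$ bound, and the chain with Theorem \ref{main-thm-1} closes exactly as you wrote.
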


The  Lipschitz estimate \eqref{f-Lip} as well as its small-scale analogue  allows
us to construct a  $d\times d$ matrix $\Gamma_\delta  (x, y)$ of  fundamental solutions for the operator $\mathcal{L}_\delta$ in $\R^d$, and 
obtain its estimates that are uniform in $\delta \in (0, \infty)$.
In particular, we will show that  if  $d\ge 3$ and $1\le  \delta<\infty$,
\begin{equation}\label{fund-0}
\left\{
\aligned
 |\Gamma_\delta (x, y)| & \le C |x-y|^{2-d},\\
 |\nabla_x \Gamma_\delta (x,y)|
 +|\nabla_y \Gamma_\delta(x, y) |
 & \le C |x-y|^{1-d},\\
 |\nabla_x \nabla_y \Gamma_\delta (x, y)|  & \le C |x-y|^{-d}
 \endaligned
 \right.
 \end{equation}
 for any $x, y\in \R^d$ and  $x\neq y$,
 where $C$ depends only on $d$, $\kappa_1$, $\kappa_2$, $\omega$, and $(\sigma, M_0)$.
 In the case $0<\delta<1$, the estimates in \eqref{fund-0} continue to hold, provided that  either 
 $|x-y|_\infty \ge 4$ or $x, y\in \omega$.
 Here $|x-y|_\infty =\max ( |x_1-y_1|, \dots, |x_d -y_d|)$ denotes the $L^\infty$ norm in $\R^d$.
 See Theorems  \ref{thm-f-1}, \ref{thm-f-1a} and \ref{thm-f-3}.
 We mention that in the scalar case with $A=I $ and $0<\delta<1$, explicit bounds for fundamental solutions were obtained by L.-M. Yeh in \cite{Yeh-2016}.
 As in the case $\delta=1$ \cite{AL-1991, KS-2011-L, KLS-2014},
 estimates of fundamental solutions are  an important tool in  the study of optimal regularity problems in the homogenization theory  for solutions
 of $\mathcal{L}_\delta (u)=f$.
 In particular, it allows us to extend the  Lipschitz estimate \eqref{f-Lip}
 from solutions of $\mathcal{L}_\delta (u)=0$ to that of $\mathcal{L}_\delta (u)=f$.
 Indeed, under the same assumptions on $A$ and $\omega$ as in Theorem \ref{main-thm-2},
 we obtain 
 \begin{equation}\label{f-Lip-f}
|\nabla u(x_0)|
\le C_p \left\{  \left(\fint_{Q(x_0, R)\cap \omega } |\nabla u|^2 \right)^{1/2}
+R \left(\fint_{Q(x_0, R)} |f|^p \right)^{1/p} \right\}
\end{equation}
for $1\le \delta\le \infty$,
where $u$ is a weak solution of $\mathcal{L}_\delta (u)=f$ in $Q(x_0, R) $ for some
$R\ge 4$ and $p>d$.
If $0\le \delta<1$, the estimate (\ref{f-Lip-f}) holds for solutions of $\mathcal{L}_\delta (u)=f \chi_\omega$ in $Q(x_0, R)$.
See Theorem \ref{main-thm-3}.
 
 We now describe  our  general approach to the proof of Theorem \ref{main-thm-1}.
 As we mentioned earlier,
 the scalar case with $0\le \delta<1$ and $A=I$ was studied in \cite{Yeh-2010, Yeh-2011, Yeh-2015, Yeh-2016},
  using a compactness method of Avellaneda and Lin \cite{AL-1987}.
 The compactness argument is fairly complicated to implement  for the operator $\mathcal{L}_\delta$,
 as  both the coefficient matrix $A$ and the ratio $\delta^2$ should  be allowed to vary.
 A more direct approach, which originated in \cite{Armstrong-Smart-2016}, was used in \cite{Chase-Russell-2017, Chase-Russell-2018}
 to treat the case $0\le \delta<1$ with bounded measurable coefficients.
 The approach relies on a result on the convergence rate, uniform in $\delta$,  for the operator $-\text{\rm div}(\Lambda_{\delta^2} A(x/\e) \nabla )$
 as $\e\to 0$.
 It is not clear how to extend either of these two methods to the case $1<\delta\le  \infty$.
  In this paper we will adapt a more recent method of S. Armstrong, T. Kuusi, and C. Smart \cite{Armstrong-2020},
 which is  based on a Caccioppoli  type inequality and the fact that $\Delta_j u$ is a solution whenever $u$ is a solution,
   where $\Delta_j$ denotes  the   difference operator,
  \begin{equation}\label{diff}
  \Delta_j u (x) = u(x+e_j) -u(x)
  \end{equation}
  for $1\le j\le d$ and $e_j= (0, \dots, 1, \dots, 0)$ with $1$ in the $j^{\rm th}$ place.
  The basic idea is to transfer the higher-order regularity of $u$  in terms of the difference operator to 
  higher-order regularity of $u$ at a large scale through Caccioppoli and Poincar\'e's  inequalities.
  For elliptic systems the approach also uses a discrete Sobolev inequality.
  
  To carry out the approach described above, a key step is to establish a Caccioppoli inequality for solutions of $\mathcal{L}_\delta (u)=0$
  in $Q_R$ for $R$ large. In the case $0\le \delta\le  1$, it can be shown by an extension argument that 
\begin{equation}\label{C-0}
\int_{Q_{R/2}}  |\nabla u|^2\, dx \le \frac{C}{R^2} \int_{Q_R} |u|^2\, dx,
\end{equation}
which is more or less known  \cite{Chase-Russell-2017, Chase-Russell-2018}.
It is not known that  \eqref{C-0}  holds for the case $1< \delta\le \infty$, with constant $C$ independent of $\delta$.
However, if $\delta$ is sufficiently  large or $\delta=\infty$,
we are able to show that  for any $\ell\ge 1$ and $R\ge 32$,
\begin{equation}\label{C-1}
\int_{Q_{R/2 }}  |\nabla u|^2\, dx \le \frac{C_\ell }{R^2} \int_{Q_R} |u|^2\, dx + \frac{C_\ell}{R^{2\ell } }
\int_{Q_R} |\nabla u|^2\, dx,
\end{equation}
by some extension and iteration arguments.
It turns out that the weaker version  \eqref{C-1} with $\ell=1$, together with the discrete Sobolev inequality,  is sufficient to
complete the proof of \eqref{Lip-e}.
We point out that the method described above does not extend to the  nonhomogeneous  system $\mathcal{L}_\delta (u) =f$
with nonsmooth $f$. 
We resolve this issue by introducing the matrix of fundamental solutions.

The paper is organized as follows. In Section \ref{section-2} we give the proof of \eqref{C-0}.
The inequality \eqref{C-1} is proved in Section \ref{section-3}, while the proof of Theorem \ref{main-thm-1} is given in Section \ref{section-4}.
In Section  \ref{section-5} we collect some known results on local estimates and give the proof of Theorem \ref{main-thm-2}.
The matrix of  fundamental solutions is introduced and studied in Section \ref{section-6}.
Finally, we establish the Lipschitz estimate for solutions of $\mathcal{L}_\delta (u)=f$ in Section \ref{section-7}.

Recall that $Q_R =(-R/2, R/2)^d$ and $Q(x_0, R)=x_0 + Q_R$ for $R>0$ and $x_0\in \R^d$.
We use $\fint_E u=\frac{1}{|E|} \int_E u$ to denote the $L^1$ average of $u$ over a set $E$.
We use $C$ to denote a positive constant that may depend on $d$, $\kappa_1$, $\kappa_2$, and $\omega$.
If $C$  depends also on other parameters, it will be stated explicitly.
We emphasize that the results in Sections 2 - 4  hold with no smoothness condition on $A$ or $F=\R^d \setminus \overline{\omega}$ 
beyond that $A$ is bounded measurable and $F$ is locally Lipschitz.
In Sections 5 - 7 we impose  the H\"older continuity condition \eqref{smoothness} on $A$ and also assume that $F$ is locally $C^{1, \alpha}$.


\section{\bf Preliminaries}\label{section-2}

Throughout this paper we assume that $\omega$ is a connected, unbounded and 1-periodic open set in $\mathbb{R}^d$.
Write
\begin{equation}\label{F1}
\mathbb{R}^d\setminus \omega =\cup_k \overline{F}_k, 
\end{equation}
where each $\overline{F}_k$ is the closure of  a bounded Lipschitz domain $F_k$ with connected boundary.
We assume that $\{ \overline{F}_k \} $ are mutually disjoint and satisfy the condition (\ref{dis}).
This allows us to construct a sequence  of mutually disjoint open sets $\{\widetilde{F}_k \}$ with connected smooth  boundary such that $\overline{F}_k \subset \widetilde{F}_k$,
\begin{equation}\label{dist}
\left\{
\aligned
&  c_0 \le \text{dist}(\partial F_k, \partial \widetilde{F}_k), \\
& c_0 \le \text{dist} (\widetilde{F}_k, \widetilde{F}_\ell ) \text{ for } k\neq \ell,
\endaligned
\right.
\end{equation}
for some $c_0>0$.
Note that by the periodicity of $\omega$, 
$\{  F_k\}$  are the shifts of a finite number of bounded Lipschitz domains contained in $Q_2$.
As a result, 
we may  assume that $\{ \widetilde{F}_k \} $ are the shifts of a finite number of  bounded smooth domains contained in $Q_{5/2}$.

 Let $\mathcal{R}$ denote the space of rigid displacements of $\R^d$; i.e.,
 \begin{equation}\label{R}
 \mathcal{R}
 =\big\{ u  = E + Bx: \ E\in \R^d \text{ and } B^T=-B \big\},
 \end{equation}
 where $B^T$ denotes the transpose of the $d\times d$ matrix $B$.
  The following extension lemma will  be useful for us.

\begin{lemma}\label{lemma-ex}
Let $F_k$ and $\widetilde{F}_k$ be given above.
There exists a linear extension operator 
$$
P_k: H^1(\widetilde{F}_k \setminus \overline{F}_k; \R^d)
\to H^1(\widetilde{F}_k; \R^d)
$$ 
such that
\begin{align}
 & P_k (u)=u \quad \text{ for any } u \in \mathcal{R}, \label{ex-1}\\
 & \| P_k (u)\|_{H^1(\widetilde{F}_k)} 
 \le C \big( \| u \|_{L^2(\widetilde{F}_k \setminus \overline{F}_k)}
 + \|  Du \|_{L^2(\widetilde{F}_k \setminus \overline{F}_k)} \big), \label{ex-2}\\
 & \|\nabla P_k(u) \|_{L^2(\widetilde{F}_k)}
 \le C \| \nabla u\|_{L^2(\widetilde{F}_k \setminus \overline{F}_k)}, \label{ex-3}\\
 & \| D  P_k (u) ) \|_{L^2(\widetilde{F}_k)}
 \le C \|  D u  \|_{L^2(\widetilde{F}_k \setminus \overline{F}_k)}, \label{ex-4}
\end{align}
where $Du$ denotes the symmetric gradient of $u$ and $C$ depends only on $d$ and $\omega$.
\end{lemma}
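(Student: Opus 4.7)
The plan is to build $P_k$ by applying a standard Sobolev extension to the part of $u$ orthogonal to rigid motions in $L^2$ and then adding back a rigid-motion correction. Set $D_k := \widetilde{F}_k \setminus \overline{F}_k$, a bounded connected Lipschitz domain since $F_k$ has connected Lipschitz boundary and $\widetilde{F}_k$ is smooth with $\overline{F}_k \subset \widetilde{F}_k$. By 1-periodicity of $\omega$, the $F_k$ come from finitely many shapes, so it suffices to construct $P_k$ for a fixed representative with a universal constant. Fix a bounded linear Sobolev extension operator $E_k : H^1(D_k; \R^d) \to H^1(\widetilde{F}_k; \R^d)$ with $E_k(v)|_{D_k} = v$ and $\|E_k v\|_{H^1(\widetilde{F}_k)} \le C\|v\|_{H^1(D_k)}$, which exists by Stein's extension theorem. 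Let $\pi_k : L^2(D_k; \R^d) \to \mathcal{R}$ be the orthogonal projection onto the finite-dimensional space of rigid displacements, canonically identified with a function on all of $\R^d$, and define
$$
P_k(u) := E_k\bigl(u - \pi_k u\bigr) + \pi_k u.
$$
Linearity is clear; since $\pi_k$ fixes $\mathcal{R}$, property (\ref{ex-1}) holds and $P_k(u)|_{D_k} = u$.

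Set $v := u - \pi_k u$. Two properties of $v$ drive the estimates. Since $\mathcal{R}$ contains constants, $\int_{D_k} v = 0$, and Poincar\'e gives $\|v\|_{L^2(D_k)} \le C\|\nabla v\|_{L^2(D_k)}$. Since $v \perp \mathcal{R}$ in $L^2(D_k)$ and $D(\pi_k u) = 0$, Korn's second inequality on the connected Lipschitz domain $D_k$ yields $\|v\|_{H^1(D_k)} \le C\|Dv\|_{L^2(D_k)} = C\|Du\|_{L^2(D_k)}$. Finite-dimensionality of $\mathcal{R}$ gives equivalence of all Sobolev norms restricted to $\mathcal{R}$; in particular $\|\pi_k u\|_{H^1(\widetilde{F}_k)} \le C\|\pi_k u\|_{L^2(D_k)} \le C\|u\|_{L^2(D_k)}$ by contractivity of the $L^2$ projection. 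Combined with boundedness of $E_k$, this immediately yields (\ref{ex-2}). For (\ref{ex-4}), $D(\pi_k u) \equiv 0$ reduces matters to $\|DE_k v\|_{L^2(\widetilde{F}_k)} \le \|\nabla E_k v\|_{L^2(\widetilde{F}_k)} \le C\|v\|_{H^1(D_k)} \le C\|Du\|_{L^2(D_k)}$, as desired.

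The one step requiring a trick is (\ref{ex-3}), because the $L^2$ projection does \emph{not} itself satisfy $\|\nabla \pi_k u\|_{L^2} \le C\|\nabla u\|_{L^2}$: a constant translate of $u$ fixes $\nabla u$ but alters $\pi_k u$. I would compare $\pi_k u$ to the explicit rigid motion
$$
\phi(x) := \bar{u}_{D_k} + B\bigl(x - \bar{x}_{D_k}\bigr), \qquad B := \frac{1}{2|D_k|}\int_{D_k}\bigl(\nabla u - (\nabla u)^T\bigr),
$$
which by construction satisfies $\|\nabla \phi\|_{L^2(D_k)} \le C\|\nabla u\|_{L^2(D_k)}$, and for which $u - \phi$ has mean zero, so $\|u - \phi\|_{L^2(D_k)} \le C\|\nabla u\|_{L^2(D_k)}$ by Poincar\'e. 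The minimality of the $L^2$ projection gives $\|u - \pi_k u\|_{L^2(D_k)} \le \|u - \phi\|_{L^2(D_k)}$, so $\pi_k u - \phi \in \mathcal{R}$ has $L^2(D_k)$ norm bounded by $C\|\nabla u\|_{L^2(D_k)}$; equivalence of norms on $\mathcal{R}$ upgrades this to $\|\nabla(\pi_k u - \phi)\|_{L^2(D_k)} \le C\|\nabla u\|_{L^2(D_k)}$, and hence $\|\nabla \pi_k u\|_{L^2(D_k)} \le C\|\nabla u\|_{L^2(D_k)}$. Then $\|\nabla v\|_{L^2(D_k)} \le C\|\nabla u\|_{L^2(D_k)}$, Poincar\'e gives $\|v\|_{H^1(D_k)} \le C\|\nabla u\|_{L^2(D_k)}$, and boundedness of $E_k$ yields $\|\nabla E_k v\|_{L^2(\widetilde{F}_k)} \le C\|\nabla u\|_{L^2(D_k)}$; combined with $\|\nabla \pi_k u\|_{L^2(\widetilde{F}_k)} \le C\|\nabla \pi_k u\|_{L^2(D_k)}$ (norm equivalence on $\mathcal{R}$), this establishes (\ref{ex-3}).
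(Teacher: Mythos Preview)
Your argument is correct. The paper itself gives no proof here, simply citing \cite[pp.~45--47]{OSY-1992}; the construction there is exactly the one you carry out---project off the rigid motions, apply a universal Sobolev extension to the remainder, and add the rigid part back---so your approach matches the intended one. Your treatment of \eqref{ex-3} via the auxiliary rigid motion $\phi$ (built from the mean of $u$ and the skew part of the averaged gradient) is the standard device for controlling $\nabla\pi_k u$ by $\nabla u$, and your remark that the raw $L^2$ projection does not by itself respect $\nabla$ is precisely why this step is needed. One small point worth making explicit: the connectedness of $D_k=\widetilde{F}_k\setminus\overline{F}_k$, which you invoke for the Korn inequality in the form $\|v\|_{H^1}\le C\|Dv\|_{L^2}$ on the orthogonal complement of $\mathcal{R}$, follows from the standing assumptions that $\partial F_k$ and $\partial\widetilde{F}_k$ are each connected with $\overline{F}_k\subset\widetilde{F}_k$ and $\text{dist}(\partial F_k,\partial\widetilde{F}_k)>0$.
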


\begin{proof} 
See \cite[pp.45-47]{OSY-1992}.
Note that since $\widetilde{F}_k$ and $F_k$ are shifts of a finite number of domains,
the constant  $C$ does not depend on $k$.
\end{proof}

Throughout the paper we  assume that $A$ is real, bounded measurable,  1-periodic,  and satisfies the elasticity condition \eqref{ellipticity}.
It is well known that  \eqref{ellipticity} implies 
\begin{align}
  A\xi \cdot \zeta  & \le \frac{\kappa_2}{4} |\xi +\xi^T| | \zeta +\zeta^T| \label{e-1},\\
 \frac{\kappa_1}{4} 
 |\xi +\xi^T|  & \le A \xi \cdot \xi \label{e-2}
 \end{align}
 for any $d\times d$ matrices $\xi$ and $\zeta$ \cite[pp.30-31]{OSY-1992}.
 
\begin{lemma}\label{lemma-2.1}
Let  $0<\delta <\infty$ and  $u\in H^1(\Omega; \R^d)$ be a weak solution of
$\mathcal{L}_\delta (u)=f$ in $\Omega$.
Then 
\begin{equation}\label{Ca-2.1}
\int_{\Omega} 
|\Lambda_\delta Du  |^2 | \varphi|^2 \, dx
\le C \int_{\Omega} |\Lambda_\delta  u|^2  |\nabla \varphi |^2\, dx
+ C \int_\Omega |f | |u| |\varphi|^2\, dx
\end{equation}
for any $\varphi \in C_0^1(\Omega)$,
where $C$ depends only $d$, $\kappa_1$ and $\kappa_2$.
In the case $\delta=0$, \eqref{Ca-2.1} holds for solutions of $\mathcal{L}_0 (u)=f\chi_\omega$ in $\Omega$.
\end{lemma}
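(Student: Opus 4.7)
This is a standard Caccioppoli estimate, and my plan is to test the weak formulation
\begin{equation*}
\int_\Omega \Lambda_{\delta^2} A\nabla u \cdot \nabla v \, dx = \int_\Omega f \cdot v\, dx
\end{equation*}
of $\mathcal{L}_\delta (u) = f$ against the admissible cutoff $v = u\varphi^2 \in H^1_0(\Omega;\R^d)$. Since $\partial_j v^\alpha = \varphi^2 \partial_j u^\alpha + 2\varphi\, u^\alpha \partial_j \varphi$, this produces the identity
\begin{equation*}
\int_\Omega \Lambda_{\delta^2} \varphi^2\, A\nabla u \cdot \nabla u\, dx = -2\int_\Omega \Lambda_{\delta^2} \varphi\, A\nabla u \cdot (u \otimes \nabla\varphi)\, dx + \int_\Omega f\cdot u\, \varphi^2\, dx.
\end{equation*}

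Next I would apply the algebraic consequences \eqref{e-1}--\eqref{e-2} of the elasticity symmetry of $A$. The coercivity \eqref{e-2} gives $A\nabla u \cdot \nabla u \gtrsim |Du|^2$ pointwise, so the left side is bounded below by $c\int_\Omega \Lambda_{\delta^2}|Du|^2\varphi^2\, dx$. On the right, \eqref{e-1} combined with the elementary bound $|\mathrm{sym}(u\otimes \nabla \varphi)| \le |u|\,|\nabla\varphi|$ gives $|A\nabla u \cdot (u\otimes\nabla\varphi)| \le C|Du|\,|u|\,|\nabla\varphi|$. A Young's inequality $2ab \le \varepsilon a^2 + \varepsilon^{-1} b^2$ applied to this cross term, with the resulting $\varepsilon\int \Lambda_{\delta^2}|Du|^2\varphi^2$ absorbed back into the left, then yields
\begin{equation*}
\int_\Omega \Lambda_{\delta^2}|Du|^2\varphi^2\, dx \le C\int_\Omega \Lambda_{\delta^2}|u|^2|\nabla\varphi|^2\, dx + C\int_\Omega |f|\,|u|\,\varphi^2\, dx.
\end{equation*}
Since $\Lambda_\delta$ is scalar and $\Lambda_\delta^2 = \Lambda_{\delta^2}$, we have $\Lambda_{\delta^2}|Du|^2 = |\Lambda_\delta Du|^2$ and $\Lambda_{\delta^2}|u|^2 = |\Lambda_\delta u|^2$, which is exactly \eqref{Ca-2.1}. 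The $\delta = 0$ case is identical with all integrations restricted to $\Omega\cap \omega$, in accordance with the weak formulation spelled out in the introduction.

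There is no real obstacle here; the two points that require care are (i) using the symmetry of $A$ so that only the symmetric gradient $Du$, and not the full $\nabla u$, appears on both sides of the estimate, so that the bound is independent of $\delta$ in the right form, and (ii) tracking the weights $\Lambda_\delta$ versus $\Lambda_{\delta^2}$, which coincide only after squaring. Both are automatic once one writes the computation out in indices.
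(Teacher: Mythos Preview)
Your proposal is correct and follows essentially the same argument as the paper: test the weak formulation against $v=u\varphi^2$, use the ellipticity \eqref{e-1}--\eqref{e-2} to extract $|Du|^2$ and to bound the cross term by $C|Du|\,|u|\,|\nabla\varphi|$, absorb via Young's inequality, and finally note $\Lambda_{\delta^2}=\Lambda_\delta^2$. Your remarks on the two delicate points (only $Du$ appears thanks to the symmetry of $A$, and the weight bookkeeping) are exactly what the paper is using when it invokes ``the fact that $|A\nabla u|\le C|Du|$.''
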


\begin{proof}
Assume $0<\delta< \infty$.
Let $v =u\varphi^2$, where $\varphi \in C_0^1(\Omega)$.
Since
$$
\int_{\Omega} \Lambda_{\delta^2} A\nabla u \cdot \nabla v \, dx =\int_\Omega f \cdot v\, dx,
$$
we see that 
$$
\int_{\Omega}
\Lambda_{\delta^2} ( A\nabla u \cdot \nabla u) \varphi^2\, dx
=-2 \int_{\Omega} \Lambda_{\delta^2} (A\nabla u \cdot u (\nabla \varphi) ) \varphi\, dx
+\int_\Omega f \cdot v\, dx,
$$
from which the inequality \eqref{Ca-2.1} follows by using \eqref{e-1}-\eqref{e-2} and the Cauchy inequality.
The fact that $|A\nabla u|\le C |D u|$ is also needed.
The case $\delta=0$ may be handled in the same manner.
\end{proof}

\begin{lemma}\label{lemma-2.2}
Let $u\in H^1(\widetilde{F}_k; \R^d)$ be a weak solution of $-\text{\rm div}(A\nabla u)=f$ in $F_k$.
Then
\begin{align}
\int_{F_k} |\nabla u|^2\, dx &  \le C \int_{\widetilde{F}_k \setminus \overline{ F}_k} |\nabla u|^2\, dx
+ C \int_{{F}_k} |f|^2\, dx \label{2.2-0},\\
\int_{F_k} | Du  |^2\, dx &  \le C \int_{\widetilde{F}_k \setminus \overline{F}_k} | Du  |^2\, dx \label{2.2-1}
+C \int_{{F}_k} |f|^2\, dx,
\end{align}
where $C$ depends only on $d$, $\kappa_1$, $\kappa_2$, and $\omega$.
\end{lemma}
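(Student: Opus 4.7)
The idea is to compare $u$ on $F_k$ with a suitable extension of its values on the annulus $\widetilde{F}_k\setminus\overline{F}_k$. Concretely, let $P_k$ be the extension operator furnished by Lemma \ref{lemma-ex}, and set
$$
v = u - P_k(u|_{\widetilde{F}_k\setminus \overline{F}_k}) \in H^1(\widetilde{F}_k;\R^d).
$$
Because $P_k$ is an extension, $v$ vanishes identically on $\widetilde{F}_k\setminus\overline{F}_k$, hence $v|_{F_k}\in H_0^1(F_k;\R^d)$. This is what lets us use $v$ as a legitimate test function in the weak formulation $-\text{div}(A\nabla u)=f$ on $F_k$.

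\textbf{Proof of \eqref{2.2-1}.} Testing the equation against $v$ gives
$$
\int_{F_k} A\nabla u\cdot\nabla u\,dx
=\int_{F_k} A\nabla u\cdot\nabla (P_k u)\,dx+\int_{F_k} f\cdot v\,dx.
$$
The left side is bounded below by $\kappa_1\int_{F_k}|Du|^2$ using \eqref{e-2}. For the first term on the right, \eqref{e-1} yields the pointwise bound $|A\nabla u\cdot\nabla(P_k u)|\le \kappa_2|Du||DP_k u|$, and then Young's inequality with a small parameter plus \eqref{ex-4} gives
$$
\Bigl|\int_{F_k} A\nabla u\cdot\nabla(P_k u)\,dx\Bigr|
\le \tfrac{\kappa_1}{2}\int_{F_k}|Du|^2\,dx + C\int_{\widetilde{F}_k\setminus\overline{F}_k}|Du|^2\,dx.
$$
For the forcing term, since $v\in H_0^1(F_k;\R^d)$, Poincar\'e and the $H_0^1$ Korn inequality (valid on any bounded open set, and applied to the bounded Lipschitz domains $F_k$ whose geometries are uniform in $k$) give $\|v\|_{L^2(F_k)}\le C\|Dv\|_{L^2(F_k)}\le C(\|Du\|_{L^2(F_k)}+\|DP_k u\|_{L^2(F_k)})$. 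Invoking \eqref{ex-4} once more and Young's inequality converts the $f$--term into $\tfrac{\kappa_1}{4}\int_{F_k}|Du|^2 + C\int_{\widetilde{F}_k\setminus\overline{F}_k}|Du|^2 + C\int_{F_k}|f|^2$. Absorbing the two $|Du|^2$ terms on $F_k$ into the left side proves \eqref{2.2-1}.

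\textbf{Proof of \eqref{2.2-0}.} Split $\nabla u=\nabla v+\nabla P_k u$ on $F_k$. Property \eqref{ex-3} immediately controls $\|\nabla P_k u\|_{L^2(F_k)}^2$ by $C\|\nabla u\|_{L^2(\widetilde{F}_k\setminus\overline{F}_k)}^2$. For $\nabla v$, the $H_0^1$ Korn inequality gives $\|\nabla v\|_{L^2(F_k)}\le C\|Dv\|_{L^2(F_k)}\le C(\|Du\|_{L^2(F_k)}+\|DP_k u\|_{L^2(F_k)})$; the first term is bounded via the already proven \eqref{2.2-1}, the second via \eqref{ex-4}. Since $|Du|\le|\nabla u|$, assembling these yields \eqref{2.2-0}.

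\textbf{Main obstacle.} The only delicate point is the dependence of the Korn/Poincar\'e constants on $k$. This is controlled by the standing observation in Section \ref{section-2} that $\{F_k\}$ and $\{\widetilde{F}_k\}$ are translates of finitely many fixed Lipschitz (resp.\ smooth) domains, so all the functional-analytic constants are uniform in $k$, ultimately depending only on $d$ and $\omega$, which is precisely the claim.
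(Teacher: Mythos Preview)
Your proof is correct and rests on the same core idea as the paper's: set $w=P_k u$, observe that $u-w\in H_0^1(F_k;\R^d)$, and run an energy/Korn argument. The organization differs slightly. The paper first applies the classical energy estimate to $-\text{div}(A\nabla(u-w))=f-\text{div}(A\nabla w)$ to get $\|\nabla u\|_{L^2(F_k)}\le C(\|f\|_{L^2(F_k)}+\|u\|_{H^1(\widetilde{F}_k\setminus\overline{F}_k)})$, and then removes the unwanted $L^2$ norm by replacing $u$ with $u-\phi$ for $\phi\in\mathcal{R}$ and invoking Poincar\'e (for \eqref{2.2-0}) or the second Korn inequality on the annulus (for \eqref{2.2-1}). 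You instead exploit the sharper extension bounds \eqref{ex-3} and \eqref{ex-4} directly, together with the elasticity structure \eqref{e-1}--\eqref{e-2}, which lets you prove \eqref{2.2-1} first and then deduce \eqref{2.2-0} from it via the splitting $\nabla u=\nabla v+\nabla P_k u$ and the $H_0^1$ Korn inequality. Both routes are short; yours avoids the subtraction-of-rigid-displacements trick at the cost of invoking two separate extension properties, while the paper's version uses only \eqref{ex-2} but needs the extra subtraction step.
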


\begin{proof}
By Lemma \ref{lemma-ex}  there exists  $w\in H^1(\widetilde{F}_k; \R^d)$ such that $w=u$ on $\widetilde{F}_k \setminus F_k$ and
$$
 \| w\|_{H^1(\widetilde{F}_k) } \le C \| u\|_{H^1(\widetilde{F}_k \setminus \overline{F}_k)}.
 $$
Since $\text{\rm div} (A\nabla (u-w))=f-\text{\rm div}(A\nabla w)$ in $F_k$ and $u-w\in H^1_0(F_k; \R^d)$,
by the classical energy estimate,
$$
\aligned
\|\nabla u\|_{L^2(F_k)}
 & \le C \big\{ \| f\|_{L^2(F_k)}
+ \| \nabla w \|_{L^2(F_k)}  \big\}\\
& \le C \big\{ \| f\|_{L^2(F_k)} +
\| u\|_{H^1(\widetilde{F}_k\setminus \overline{F}_k)} \big\}.
\endaligned
$$
Note that for any $\phi\in \mathcal{R}$, 
$u-\phi$ satisfies the same condition as $u$.
It follows that 
\begin{equation}\label{2.2-2}
\|\nabla u -\nabla \phi \|_{L^2(F_k)}
\le C \big\{  \| f\|_{L^2(F_k)} +  \| u-\phi \|_{H^1(\widetilde{F}_k\setminus \overline{F}_k)} \big\}.
\end{equation}
By taking $\phi$ to be the $L^1$ average of $u$ over $\widetilde{F}_k \setminus \overline{F}_k$ and
using Poincar\'e's  inequality we obtain \eqref{2.2-0}.
To see \eqref{2.2-1}, we use
$$
\| D u \|_{L^2(F_k)}
\le \|\nabla u -\nabla \phi \|_{L^2(F_k)}
\le C\big\{ \| f\|_{L^2(F_k)} +  \| u-\phi \|_{H^1(\widetilde{F}_k\setminus \overline{F}_k)} \big\}.
$$
Since this holds for any $\phi\in \mathcal{R}$, 
 \eqref{2.2-1} follows by the second Korn inequality \cite[p.19]{OSY-1992}.
\end{proof}

\begin{remark}\label{remark-p}
It follows from Lemma \ref{lemma-2.2} that if $\mathcal{L}_\delta (u) =0 $ in $Q_{R+3}$
for some $R>0$, then
\begin{equation}\label{ex-1a}
\int_{Q_R} |\nabla u|^2\, dx
\le C \int_{Q_{R+3}\cap \omega} 
 |\nabla u|^2\, dx
 \quad
 \text{ and } \quad
 \int_{Q_R} | Du |^2\, dx
 \le C \int_{Q_{R+3}\cap \omega} | Du |^2\, dx.
 \end{equation}
 To see this, it suffices to note that if $F_k \cap Q_R\neq \emptyset$, then
 $\widetilde{F}_k \setminus \overline{F} _k \subset Q_{R+3}\cap \omega$.
 Also, observe that by Sobolev inequality,  for any $u\in H^1(\widetilde{F}_k; \R^d)$, 
 \begin{equation}\label{ex-2a}
 \int_{\widetilde{F}_k} |u|^2\, dx 
 \le C \int_{\widetilde{F}_k} |\nabla u|^2\, dx
 + C \int_{\widetilde{F}_k \setminus \overline{F}_k} |u|^2\, dx.
 \end{equation}
 This, together with \eqref{2.2-0}, implies that if 
 $\mathcal{L}_\delta (u)=0$ in $Q_{R+3}$ for some $R>0$, then
 \begin{equation}\label{ex-3a}
 \int_{Q_R} |u|^2\, dx
 \le C \int_{Q_{R+3}\cap \omega} 
 |u|^2\, dx
 + \int_{Q_{R+3}\cap \omega} 
 |\nabla u|^2\, dx.
 \end{equation}
\end{remark}

The next theorem gives a Caccioppoli inequality, which is uniform in $\delta \in [0, 1]$,
 for $\mathcal{L}_\delta$.
 
\begin{thm}\label{thm-2.3}
Suppose $0\le \delta < \infty $.
Let $u\in H^1(Q_{2R}; \R^d)$ be a weak solution of
$\mathcal{L}_\delta (u)=0$ in $Q_{2R} $ for some $R\ge 4$.
Then
\begin{equation}\label{Ca-1}
\int_{Q_{R}} |\nabla u|^2\, dx
\le \frac{C(1+\delta^2) }{R^2}
\int_{Q_{2R} } |u|^2\, dx,
\end{equation}
where $C$ depends only on $d$, $\kappa_1$, $\kappa_2$, and $\omega$.
\end{thm}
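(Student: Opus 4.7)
My plan is to combine the Caccioppoli-type bound of Lemma~\ref{lemma-2.1} with a Korn inequality on the periodic perforated matrix $\omega$, and then pass from $\omega$ to the full cube $Q_R$ using Remark~\ref{remark-p} (which is a direct consequence of Lemma~\ref{lemma-2.2}).

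First I would apply Lemma~\ref{lemma-2.1} with $f=0$ and a cutoff $\varphi\in C_0^1(Q_{3R/2})$ satisfying $\varphi\equiv 1$ on $Q_{R+3}$ and $|\nabla\varphi|\le C/R$ (available for $R$ sufficiently large; the remaining range $4\le R\le R_0$ can be absorbed into the constants). Using $\Lambda_\delta^2\le 1+\delta^2$ on the right and keeping only the $\omega$-part (where $\Lambda_\delta\equiv 1$) on the left, this yields
$$\int_{Q_{R+3}\cap\omega}|Du|^2\,dx \;\le\; \frac{C(1+\delta^2)}{R^2}\int_{Q_{2R}}|u|^2\,dx.$$

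Next I would upgrade $Du$ to $\nabla u$ on $Q_{R+3}\cap\omega$ by means of the second Korn inequality on the periodic perforated domain (cf.~\cite{OSY-1992}), which delivers a rigid motion $\phi=E+Bx\in\mathcal{R}$ with
$$\int_{Q_{R+3}\cap\omega}|\nabla u-\nabla\phi|^2\,dx \;\le\; C\int_{Q_{R+3}\cap\omega}|Du|^2\,dx,$$
the constant $C$ being independent of $R$ thanks to the periodicity and connectedness of $\omega$. Taking $\phi$ to be the $L^2(Q_{R+3}\cap\omega)$-orthogonal projection of $u$ onto $\mathcal{R}$, orthogonality gives $\|\phi\|_{L^2(Q_{R+3}\cap\omega)}\le \|u\|_{L^2(Q_{2R})}$; the positive periodic density of $\omega$ combined with the structure of rigid motions (after centering, $\int_{Q_{R+3}\cap\omega}|B(x-x_0)|^2\,dx \gtrsim R^{d+2}|B|^2$) then forces $|B|^2\le CR^{-d-2}\|u\|_{L^2(Q_{2R})}^2$, so that $\int_{Q_{R+3}\cap\omega}|\nabla\phi|^2\,dx \le CR^{-2}\|u\|_{L^2(Q_{2R})}^2$. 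The triangle inequality now produces
$$\int_{Q_{R+3}\cap\omega}|\nabla u|^2\,dx \;\le\; \frac{C(1+\delta^2)}{R^2}\int_{Q_{2R}}|u|^2\,dx.$$

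Finally, the first inequality in \eqref{ex-1a} of Remark~\ref{remark-p} gives $\int_{Q_R}|\nabla u|^2 \le C\int_{Q_{R+3}\cap\omega}|\nabla u|^2$, and combining this with the preceding display yields the conclusion. The step I expect to be most delicate is the uniform-in-$R$ Korn inequality on $Q_{R+3}\cap\omega$ together with the attendant bound on the rigid-motion projection: both rest on the periodic structure of $\omega$, and this is precisely where the elasticity setting diverges from the scalar case (in which $\nabla u$ and $Du$ coincide and no Korn is needed). The rest of the argument is a routine application of Lemma~\ref{lemma-2.1} with a standard cutoff and of the extension-based transfer already recorded in Remark~\ref{remark-p}.
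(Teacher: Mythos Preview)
Your proposal is correct and follows the same overall architecture as the paper (Caccioppoli on the symmetric gradient via Lemma~\ref{lemma-2.1}, a Korn inequality, and the transfer estimate \eqref{ex-1a}), but you apply these ingredients in a different order that makes the Korn step harder than it needs to be.

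The paper applies the second Korn inequality on the \emph{full} cube $Q_R$,
\[
\int_{Q_R}|\nabla u|^2\,dx \le C\int_{Q_R}|Du|^2\,dx + \frac{C}{R^2}\int_{Q_R}|u|^2\,dx,
\]
where $C$ depends only on $d$ by scaling. It then controls $\int_{Q_R}|Du|^2$ by $\int_{Q_{R+3}\cap\omega}|Du|^2$ via the \emph{second} inequality of \eqref{ex-1a}, and finally bounds the latter by the Caccioppoli estimate. The lower-order term $\frac{C}{R^2}\int_{Q_R}|u|^2$ is already dominated by the right-hand side of \eqref{Ca-1}, so no rigid-motion bookkeeping is needed at all.

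By contrast, you apply Korn on the perforated set $Q_{R+3}\cap\omega$ and then use the \emph{first} inequality of \eqref{ex-1a}. This forces you to (i) invoke a uniform-in-$R$ Korn inequality on a perforated domain, and (ii) estimate the rigid-motion projection via the moment computation $\int_{Q_{R+3}\cap\omega}|B(x-x_0)|^2\gtrsim R^{d+2}|B|^2$. Both steps are true (the first follows, for instance, from the extension operator of Lemma~\ref{lemma-ex} composed with the full-cube Korn), but they are extra work. One small caution: you first assert that Korn ``delivers'' a rigid motion $\phi$ and then impose that $\phi$ be the $L^2$-projection of $u$; these are not a priori the same choice, so you should either (a) use the Korn form with the $L^2$ lower-order term (as above, but on the perforated set) applied to $u-\phi$, together with the Korn--Poincar\'e bound $\|u-\phi\|_{L^2}\le CR\|Du\|_{L^2}$, or (b) explicitly argue that the $L^2$-projection realises the infimum up to a constant. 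Either fix is routine, and the paper's ordering sidesteps the issue entirely.
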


\begin{proof}

By the second Korn inequality,
\begin{equation}\label{K-1}
\int_{Q_R} 
|\nabla u|^2 \le C \int_{Q_R} | Du |^2\, dx
+\frac{C}{R^2} \int_{Q_R} |u|^2\, dx,
\end{equation}
where $C$ depends only on $d$.
In \eqref{Ca-2.1} we choose $\varphi\in C_0^1(Q_{2R})$ such that $\varphi=1$ in $Q_{R+3}$ and
$|\nabla \varphi|\le C/R$. This gives
\begin{equation}\label{2.3-1}
\int_{Q_{R+3} \cap \omega}
|D u  |^2\, dx \le \frac{C(1+\delta^2) }{R^2} \int_{Q_{2R}} |u|^2\, dx.
\end{equation}
which, together with \eqref{K-1} and  \eqref{ex-1a}, gives \eqref{Ca-1}.
\end{proof}


\section{A Caccioppoli type inequality  for $1< \delta\le \infty$}\label{section-3}

We first consider the case $1<\delta<\infty$.

 \begin{lemma}\label{lemma-7.1}
 Let $ 1 < \delta< \infty$.
 Let $u\in H^1(\widetilde{F}_k; \R^d)$ be a weak solution of
 $-\text{\rm div}(\Lambda_{\delta^2} A\nabla u)=f$ in $\widetilde{F}_k$.
 Then
 \begin{equation}\label{7.1-0}
 \delta^2 \| Du\|_{L^2(F_k)}
 \le C \big\{ \| f\|_{L^p (\widetilde{F}_k)}
 + \| Du\|_{L^2(\widetilde{F}_k\setminus \overline{F}_k)} \big\},
 \ \end{equation}
 where  $p=\frac{2d}{d+2}$ for $d\ge 3$ and $p>1$ for $d=2$.
The constant  $C$ depends only on $d$, $\kappa_1$, $\kappa_2$, and $\omega$.
 \end{lemma}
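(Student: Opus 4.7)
The plan is to recast the restriction $u|_{F_k}$ as the solution of a Neumann boundary value problem whose data are of order $\delta^{-2}$, and then run an energy/duality argument producing the required scaling. Splitting the weak formulation on $\widetilde F_k$ at the interface $\partial F_k$ and integrating by parts in each piece yields both the interior equation
$$-\mathrm{div}(A\nabla u)=\delta^{-2} f \quad\text{in }F_k,$$
and the flux-continuity interface condition
$$A\nabla u|_{F_k}\cdot n\ =\ \delta^{-2}\bigl(A\nabla u|_\omega\cdot n\bigr)\quad\text{on }\partial F_k,$$
with $n$ the outward unit normal to $F_k$. Both the source and the Neumann data are thus of size $O(\delta^{-2})$, which is what will produce the factor $\delta^2$ on the left of \eqref{7.1-0}.

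Next I would pick $\phi\in\mathcal R$ via the second Korn inequality on the Lipschitz domain $F_k$ so that $\|u-\phi\|_{H^1(F_k)}\le C\|Du\|_{L^2(F_k)}$ (the constant is uniform in $k$ because the $F_k$ are translates of finitely many shapes), and note that the minor symmetries of $A$ give $A\nabla\phi\equiv 0$ as a matrix. Applying Green's identity on $F_k$ with test function $u-\phi$ and substituting the interface condition gives
$$\int_{F_k}A\nabla(u-\phi)\cdot\nabla(u-\phi)\,dx=\delta^{-2}\!\int_{F_k}f\cdot(u-\phi)\,dx+\delta^{-2}\bigl\langle A\nabla u|_\omega\cdot n,\,u-\phi\bigr\rangle_{\partial F_k}.$$
By \eqref{e-2} the left side is bounded below by $c\,\|Du\|_{L^2(F_k)}^2$. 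H\"older together with the Sobolev embedding $H^1(F_k)\hookrightarrow L^{p'}(F_k)$ (with $p'=2^*$ when $d\ge 3$ and any finite $p'$ when $d=2$) and the Korn normalisation above bound the bulk term by $C\delta^{-2}\|f\|_{L^p(F_k)}\|Du\|_{L^2(F_k)}$, while trace theory gives $\|u-\phi\|_{H^{1/2}(\partial F_k)}\le C\|Du\|_{L^2(F_k)}$.

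The main obstacle is the remaining $H^{-1/2}$ estimate: I need to control $\|A\nabla u|_\omega\cdot n\|_{H^{-1/2}(\partial F_k)}$ by the $L^p$ norm of $f$, not the $L^2$ norm that a crude trace estimate would yield. I would handle this by a second application of Green's identity on the $\omega$-side: given $\psi\in H^{1/2}(\partial F_k)$, extend it to $\Psi\in H^1(\widetilde F_k\setminus\overline F_k;\R^d)$ vanishing on $\partial\widetilde F_k$ with $\|\Psi\|_{H^1}\le C\|\psi\|_{H^{1/2}}$; such an extension exists since \eqref{dist} keeps the two boundary components at positive distance. Then Green's identity together with the equation $-\mathrm{div}(A\nabla u)=f$ in $\widetilde F_k\setminus\overline F_k$ reads
$$\int_{\partial F_k}(A\nabla u|_\omega\cdot n)\,\psi\,d\sigma=\int_{\widetilde F_k\setminus\overline F_k}f\,\Psi\,dx-\int_{\widetilde F_k\setminus\overline F_k}A\nabla u\cdot\nabla\Psi\,dx,$$
and H\"older, Sobolev, and the minor-symmetry bound $|A\nabla u\cdot\nabla\Psi|\le C|Du||D\Psi|$ together deliver
$$\|A\nabla u|_\omega\cdot n\|_{H^{-1/2}(\partial F_k)}\le C\bigl(\|f\|_{L^p(\widetilde F_k\setminus\overline F_k)}+\|Du\|_{L^2(\widetilde F_k\setminus\overline F_k)}\bigr).$$

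Assembling the bounds, every term on the right of the main Green identity carries a single factor of $\|Du\|_{L^2(F_k)}$; dividing it out and multiplying through by $\delta^2$ yields \eqref{7.1-0}. The conceptual point --- missed by any naive energy estimate on the whole $\widetilde F_k$, which only delivers the wrong scaling $\delta^2\|Du\|_{L^2(F_k)}\lesssim \delta^2\|Du\|_{L^2(\widetilde F_k\setminus\overline F_k)}+\cdots$ --- is the identification of $u|_{F_k}$ as a Neumann problem; only then does the $\delta^{-2}$ factor built into the flux-continuity condition propagate to the final bound.
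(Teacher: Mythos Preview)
Your argument is correct, but the paper's proof is shorter and avoids boundary integrals altogether. Instead of splitting at $\partial F_k$ and invoking $H^{-1/2}/H^{1/2}$ duality twice, the paper simply extends $u|_{F_k}$ to a function $v\in H_0^1(\widetilde F_k;\R^d)$ with $\|v\|_{H^1(\widetilde F_k)}\le C\|u\|_{H^1(F_k)}$ and plugs $v$ directly into the weak formulation on all of $\widetilde F_k$. Because $v=u$ on $F_k$, the interior term is $\delta^2\int_{F_k}A\nabla u\cdot\nabla u\ge c\,\delta^2\|Du\|_{L^2(F_k)}^2$, while the annular term and the source term are bounded by $(\|Du\|_{L^2(\widetilde F_k\setminus\overline F_k)}+\|f\|_{L^p(\widetilde F_k)})\|u\|_{H^1(F_k)}$ via H\"older, Sobolev, and the extension bound. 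The Korn normalisation $u\mapsto u-\phi$ is applied at the very end. In effect the extension $v$ plays the role of your $\Psi$ and your test function $u-\phi$ simultaneously, and the two Green identities you write collapse into a single use of the weak formulation.

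One remark on your closing comment: the ``naive energy estimate on the whole $\widetilde F_k$'' does in fact give the correct $\delta^2$ scaling, provided one tests against an extension of $u|_{F_k}$ rather than against $u$ itself. The essential point is not the Neumann reformulation per se, but the choice of a test function that equals $u$ on $F_k$ and is controlled on the annulus --- your route and the paper's route both achieve this, just with different bookkeeping.
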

 
 \begin{proof}
 Let $v\in H_0^1(\widetilde{F}_k; \R^d)$ be an extension of $u$ from $F_k$ to 
 $\widetilde{F}_k$ such that 
 \begin{equation}\label{7.1-1}
\|v\|_{H^1(\widetilde{F}_k)}
 \le C \| u\|_{H^1(F_k)}.
 \end{equation}
 Since
 $$
 \int_{\widetilde{F}_k \setminus \overline{F}_k}
 A\nabla u \cdot \nabla v\, dx
 +\delta^2 \int_{F_k} A\nabla u \cdot \nabla v\, dx=\int_{\widetilde{F}_k} f \cdot v\, dx,
 $$
 it follows that
 \begin{equation}\label{7.1-2}
 \aligned
 \delta^2\int_{F_k} |D u|^2\, dx
  & \le C \| f\|_{L^p(\widetilde{F}_k)}  \| v\|_{L^{p^\prime} (\widetilde{F}_k)}
 + C \|  Du \|_{L^2(\widetilde{F}_k \setminus \overline{F}_k)} \|  D v \|_{L^2(\widetilde{F}_k \setminus \overline{F}_k)}\\
 & \le C (   \| f\|_{L^p(\widetilde{F}_k)} 
 +  \|  Du \|_{L^2(\widetilde{F}_k \setminus \overline{F}_k)}) 
 \| u\|_{H^1(F_k)},
 \endaligned
 \end{equation}
 where we have used Sobolev inequality and \eqref{7.1-1}.
 We now choose $\phi\in \mathcal{R}$ such that
 $$
 \| u-\phi \|_{H^1(F_k)} \le C \| D u \|_{L^2(F_k)}.
 $$
 Since $u-\phi$ satisfies the same conditions as $u$, we may deduce \eqref{7.1-0},
  readily  from \eqref{7.1-2}, with $u-\phi$ in the place of $u$.
 \end{proof}

\begin{lemma}\label{lemma-3.2}
Suppose $1< \delta< \infty$.
Let $u\in H^1(Q_{R}; \R^d)$ be a weak solution of $\mathcal{L}_\delta (u)=0$ in $Q_{R}$
for some $R\ge 16$.
Then, for $(R/2)\le   r\le  R-8$ and $0<\e<1$,
\begin{equation}\label{3.2-0}
\int_{Q_r}
|\nabla u |^2\, dx
\le \frac{C}{\e (R-r)^2} \int_{Q_{R}} |u|^2\, dx
+   \Big(\e +\frac{C}{\delta^2} \Big) \int_{Q_{R}} |\nabla u|^2\, dx,
\end{equation}
where $C$ depends only on $d$, $\kappa_1$, $\kappa_2$, and $\omega$.
\end{lemma}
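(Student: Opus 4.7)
The plan is to combine the weighted Caccioppoli estimate of Lemma~\ref{lemma-2.1} with the high-contrast inclusion estimate of Lemma~\ref{lemma-7.1} and the second Korn inequality. The irreducible $C/\delta^2$ factor in \eqref{3.2-0} will arise directly from applying Lemma~\ref{lemma-7.1} to control $\int_{Q_r\cap F}|Du|^2$, while the $\varepsilon\int_{Q_R}|\nabla u|^2$ piece will emerge from Young's inequality used to neutralize the $\delta^2$-weighted residue that the Caccioppoli step leaves over the inclusions.

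Concretely, I would first fix $r'=(R+r)/2$ and choose a standard cutoff $\varphi\in C_0^1(Q_{r'})$ with $\varphi\equiv 1$ on $Q_r$ and $|\nabla\varphi|\le C/(R-r)$, and apply \eqref{Ca-2.1} with $f=0$ to obtain
\begin{equation*}
\int_{Q_r\cap\omega}|Du|^2+\delta^2\int_{Q_r\cap F}|Du|^2\le \frac{C}{(R-r)^2}\int_{Q_R\cap\omega}|u|^2+\frac{C\delta^2}{(R-r)^2}\int_{Q_R\cap F}|u|^2.
\end{equation*}
The second Korn inequality on $Q_r$, together with $r\ge R/2$ (so that $1/r^2\le C/(R-r)^2$), gives
\begin{equation*}
\int_{Q_r}|\nabla u|^2\le C\int_{Q_r}|Du|^2+\frac{C}{(R-r)^2}\int_{Q_R}|u|^2,
\end{equation*}
so it remains to control $\int_{Q_r\cap F}|Du|^2$ and to dispose of the unfavorable $\delta^2\int_{Q_R\cap F}|u|^2$.

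For the first of these, the hypothesis $R-r\ge 8$ ensures $\widetilde{F}_k\subset Q_R$ for every $F_k$ meeting $Q_r$, and Lemma~\ref{lemma-7.1} applied inclusion by inclusion with $f=0$ gives $\|Du\|_{L^2(F_k)}^2\le C\delta^{-4}\|Du\|_{L^2(\widetilde{F}_k\setminus F_k)}^2$; summing and using $\delta\ge 1$ produces
\begin{equation*}
\int_{Q_r\cap F}|Du|^2\le \frac{C}{\delta^2}\int_{Q_R\cap\omega}|Du|^2\le \frac{C}{\delta^2}\int_{Q_R}|\nabla u|^2,
\end{equation*}
which is precisely the source of the $C/\delta^2$ term in \eqref{3.2-0}. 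To handle the obnoxious $\delta^2\int_{Q_R\cap F}|u|^2$, I would invoke the extension operator $P_k$ of Lemma~\ref{lemma-ex}: on each $F_k$ write $u=P_k(u)+(u-P_k(u))$, so that \eqref{ex-2} controls $\int_{F_k}|P_k(u)|^2$ by $C\int_{\widetilde{F}_k\setminus F_k}(|u|^2+|Du|^2)$, while Poincar\'e's inequality applied to $u-P_k(u)$ (which vanishes on $\widetilde{F}_k\setminus F_k$) controls $\int_{F_k}|u-P_k(u)|^2$ by $C\int_{F_k}|\nabla u|^2+C\int_{\widetilde{F}_k\setminus F_k}|\nabla u|^2$.

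Summing these estimates and inserting into the Caccioppoli right-hand side reduces $\delta^2\int_{Q_R\cap F}|u|^2/(R-r)^2$ to a linear combination of $C\delta^2(R-r)^{-2}$ times $\int_{Q_R\cap\omega}|u|^2$, $\int_{Q_R\cap\omega}|Du|^2$, and $\int_{Q_R}|\nabla u|^2$. A careful application of Young's inequality with parameter $\varepsilon$ then splits the $\delta^2$-weighted gradient contribution as $\varepsilon\int_{Q_R}|\nabla u|^2$ plus a residual that is absorbed into the $C\varepsilon^{-1}(R-r)^{-2}\int_{Q_R}|u|^2$ budget of \eqref{3.2-0} (using $R-r\ge 8$ to control the emerging $(R-r)^{-4}$ factor), while a further invocation of Lemma~\ref{lemma-7.1} converts the $\delta^2\int_{Q_R\cap F}|Du|^2$ residue into an additional $C/\delta^2\int_{Q_R}|\nabla u|^2$. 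The main obstacle is exactly this $\delta^2\int_F|u|^2$ term: the extension argument unavoidably introduces $\delta^2$-weighted gradient residues, and only the tight interplay of Lemma~\ref{lemma-7.1} (for the $\delta^{-2}$ gain on symmetric gradients inside $F$) with Young's inequality (for the $\varepsilon$ trade-off) converts those residues into the two admissible error terms of \eqref{3.2-0}.
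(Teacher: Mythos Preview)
Your approach has a genuine gap in the handling of the term $\frac{\delta^2}{(R-r)^2}\int_{Q_R\cap F}|u|^2$. After your extension argument you correctly arrive at
\[
\frac{\delta^2}{(R-r)^2}\int_{Q_{r'}\cap F}|u|^2
\le \frac{C\delta^2}{(R-r)^2}\int_{Q_R\cap\omega}|u|^2
+\frac{C\delta^2}{(R-r)^2}\int_{Q_R}|\nabla u|^2 ,
\]
but at this point the $\delta^2$ factor is simply stuck. There is no product structure here on which to run Young's inequality: the first term on the right is a pure $|u|^2$ integral with coefficient $\delta^2(R-r)^{-2}$, which blows up as $\delta\to\infty$ and cannot be traded against $\varepsilon\int|\nabla u|^2$; the second term likewise carries the $\delta^2$ weight with nothing to cancel it. Invoking Lemma~\ref{lemma-7.1} again does not help with either, since that lemma gives decay for $\|Du\|_{L^2(F_k)}$, not for $\|u\|_{L^2(F_k)}$ or $\|\nabla u\|_{L^2(\omega)}$. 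In short, once the standard test function $u\varphi^2$ has been used in \eqref{Ca-2.1}, the $\delta^2\int_F|u|^2|\nabla\varphi|^2$ contribution cannot be absorbed by the terms allowed in \eqref{3.2-0}.

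The paper avoids this difficulty by never generating that term in the first place. Instead of testing with $u\varphi^2$, it builds a modified test function $\phi=u\varphi^2-\sum_k w_k$, where each $w_k\in H^1_0(\widetilde{F}_k)$ is chosen so that $\phi$ equals a rigid displacement $g_k$ on $F_k$. Then $D\phi=0$ on $F$, and by the elasticity structure the $\delta^2\int_F A\nabla u\cdot\nabla\phi$ term in the weak formulation vanishes identically. What remains is $\int_{Q_R\cap\omega}A\nabla u\cdot\nabla(u\varphi^2)=\sum_k\int_{\widetilde{F}_k\setminus\overline{F}_k}A\nabla u\cdot\nabla w_k$, and the corrections $w_k$ are estimated via Lemma~\ref{lemma-7.1}; this is where the $C/\delta^2$ and the $\varepsilon$ terms actually arise. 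The construction of this rigid-on-inclusions test function is the key idea you are missing.
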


\begin{proof}
As in the proof of Theorem \ref{thm-2.3},
it follows from the second Korn inequality and \eqref{2.2-1} that
\begin{equation}\label{3.2-1}
\int_{Q_r} |\nabla u|^2\, dx
\le C \int_{Q_{r+3}\cap \omega} |Du |^2\, dx
+\frac{C}{r^2} \int_{Q_r} |u|^2\, dx.
\end{equation}
Since $r\ge R-r$, it suffices to bound the first term in the right-hand side of \eqref{3.2-1}.
To this end,
let $\varphi$ be a function in $C_0^1(Q_{R-3})$ such that
$\varphi=1$ in $Q_{r+3}$ and 
$$
|\nabla \varphi|\le C(R-r-6)^{-1}
\le C (R-r)^{-1},
$$
where we have used the assumption $R-r \ge   8$.
Recall  that if $F_k\cap Q_{R-3}\neq \emptyset$, then $\widetilde{F}_k\subset Q_{R}$.
For each $F_k$ with  $\widetilde{F}_k\subset Q_{R}$, we let $w_k\in H_0^1(\widetilde{F}_k; \R^d)$
be an extension of $u\varphi^2-g_k $ from $F_k$ to $\widetilde{F}_k$ with the property that
\begin{equation}\label{p-1}
\| w_k \|_{H^1(\widetilde{F}_k)} \le C \| u\varphi^2 -g_k \|_{H^1(F_k)},
\end{equation}
where $ g_k \in \mathcal{R} $ is to be determined.
Extend $w_k$ from $\widetilde{F}_k $ to $\R^d $ by zero and let
\begin{equation}\label{p-3}
\phi=u\varphi^2 -\sum_k  w_k \quad \text{ in } \R^d,
\end{equation}
where  the sum  is taken over those $k$'s for which $\widetilde{F}_k \subset Q_R$.
Note that $\phi (x) =g_k$ if $x\in F_k$ and $\widetilde{F}_k \subset Q_R$.
Since  $\phi\in H^1_0(Q_R; \R^d)$, we have 
\begin{equation}\label{p-2}
\int_{Q_R\cap \omega} A\nabla u \cdot \nabla \phi\, dx  +\delta^2 \int_{Q_R\cap F} A\nabla  u\cdot \nabla \phi\, dx=0.
\end{equation}
Since $D \phi =0$ in $F$, we obtain 
$$
\int_{Q_R\cap \omega} A\nabla u \cdot \nabla \phi\, dx=0.
$$
Thus, 
\begin{equation}\label{3.2-5}
\aligned
\Big|
\int_{Q_R\cap \omega} A\nabla u \cdot \nabla (u\varphi^2) \, dx\Big|
&\le \sum_k \Big|
\int_{\widetilde{F}_k \setminus \overline{F}_k}
A\nabla u \cdot \nabla w_k \, dx \Big|\\
& \le  C \sum_k
\|  D u\|_{L^2(\widetilde{F}_k \setminus \overline{F}_k) }
\| D w_k \|_{L^2(\widetilde{F}_k \setminus \overline{F}_k)}.
\endaligned
\end{equation}
Note that by \eqref{p-1},
$$
\|\nabla w_k\|_{L^2(\widetilde{F}_k \setminus \overline{F}_k)}
    \le C \| u\varphi^2 -g_k \|_{H^1(F_k)}
  \le C \| D(u \varphi^2)\|_{L^2(F_k)},
 $$
 where we have  chosen $g_k \in \mathcal{R}$ such that the last inequality holds.
 Consequently, 
 $$
 \aligned
 \|\nabla w_k\|_{L^2(\widetilde{F}_k \setminus \overline{F}_k)}
&\le C \| D u  \|_{L^2(F_k)}
+ C \|u \nabla \varphi\|_{L^2(F_k)}\\
& \le C \delta^{-2} \|  Du  \|_{L^2(\widetilde{F}_k \setminus \overline{F}_k)}
+ C \|u \nabla \varphi\|_{L^2(F_k)},
\endaligned
$$
where we have  used \eqref{7.1-0} for the last inequality.
This, together with \eqref{3.2-5}, gives
$$
\Big|
\int_{Q_R\cap \omega} A\nabla u \cdot \nabla (u\varphi^2) \, dx\Big|
\le (C \delta^{-2} +\e) \int_{Q_R} | D  u|^2\, dx
+ C \e^{-1}\int_{Q_R} |u |^2 | \nabla \varphi|^2\, dx
$$
for any $0< \e<1$, where we have used the Cauchy inequality.
Hence,
$$
\int_{Q_{r+3} \cap \omega} 
|Du |^2\, dx
 \le (C \delta^{-2} +\e) \int_{Q_R} | D u|^2\, dx
+ \frac{C}{\e (R-r)^2}
\int_{Q_R} |u|^2  \, dx,
$$
which, combined  with \eqref{3.2-1}, yields \eqref{3.2-0}.
\end{proof}

The following theorem provides a weaker version of the Caccioppoli inequality, that is uniform for $\delta\in (1, \infty)$,
for the operator $\mathcal{L}_\delta$.

\begin{thm}\label{thm-3.3}
Suppose $1<\delta< \infty$.
Let $u\in H^1(Q_R; \R^d)$ be a weak solution of
$\mathcal{L}_\delta (u)=0$ in $Q_R$ for some $R\ge 32$.
Then, for any $\ell\ge 1$,
\begin{equation}\label{3.3-0}
\int_{Q_{R/2}}  |\nabla u|^2\, dx
\le \frac{C}{R^2} \int_{Q_R} |u|^2\, dx
+\frac{C}{R^{2\ell} }  \int_{Q_R} |\nabla u|^2\, dx,
\end{equation}
where $C$ depends only on $d$, $\kappa_1$, $\kappa_2$, $\ell$,  and $\omega$.
\end{thm}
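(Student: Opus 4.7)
The plan is to iterate Lemma \ref{lemma-3.2} dyadically on a family of subcubes of $Q_R$, after first disposing of moderate $\delta$ via Theorem \ref{thm-2.3}.

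Fix a threshold $\delta_0 = \delta_0(\ell)$ to be chosen. For $1 < \delta \le \delta_0$, Theorem \ref{thm-2.3} applied with the inner cube $Q_{R/2}$ (valid since $R/2 \ge 16 \ge 4$) gives
$$\int_{Q_{R/2}} |\nabla u|^2 \, dx \le \frac{C(1+\delta^2)}{R^2}\int_{Q_R} |u|^2\, dx \le \frac{C_\ell}{R^2} \int_{Q_R} |u|^2\, dx,$$
which is already stronger than \eqref{3.3-0}. So only the case $\delta > \delta_0$ remains.

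For $\delta > \delta_0$, set $r_k := R\bigl(1 - 2^{-k-1}\bigr)$, so that $r_0 = R/2$, $r_k \nearrow R$, and $r_{k+1}-r_k = R\cdot 2^{-k-2}$. As long as $r_{k+1}-r_k \ge 8$ (equivalently $k \le \log_2 R - 5$) and $r_{k+1} \le 2r_k$ (automatic for $k\ge 0$), Lemma \ref{lemma-3.2} with inner scale $r_k$, outer scale $r_{k+1}$, and parameter $\varepsilon$ yields
$$\int_{Q_{r_k}} |\nabla u|^2\, dx \le \frac{16 C_0 \cdot 4^{k}}{\varepsilon R^2} \int_{Q_{r_{k+1}}} |u|^2\, dx + \lambda \int_{Q_{r_{k+1}}} |\nabla u|^2\, dx, \qquad \lambda := \varepsilon + C_0/\delta^2,$$
where $C_0$ denotes the constant from Lemma \ref{lemma-3.2}. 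Telescoping $L := \lfloor \log_2 R\rfloor - 5$ such steps (the remaining small range $R \in [32,64)$ is handled by the trivial bound $\int_{Q_{R/2}} |\nabla u|^2 \le \int_{Q_R}|\nabla u|^2 \le C_\ell R^{-2\ell}\int_{Q_R} |\nabla u|^2$, since $R$ is then bounded) produces
$$\int_{Q_{R/2}} |\nabla u|^2\, dx \le \frac{16 C_0}{\varepsilon R^2} \biggl(\sum_{k=0}^{L-1} (4\lambda)^k \biggr) \int_{Q_R} |u|^2\, dx + \lambda^L \int_{Q_R} |\nabla u|^2\, dx.$$
Now choose $\varepsilon := \min(4^{-\ell}/2,\, 1/16)$ and then $\delta_0$ so large that $C_0/\delta_0^2 \le \varepsilon$, which forces $4\lambda \le 1/2$ and $\lambda \le 4^{-\ell}$. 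The geometric sum is then bounded by $2$, yielding coefficient $C_\ell R^{-2}$ on $\int |u|^2$, while $\lambda^L \le 4^{-\ell L} \le C_\ell R^{-2\ell}$ since $L \ge \log_2 R - 6$.

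The main obstacle is reconciling two competing demands on the iteration: to produce the $R^{-2}$ scaling on the $\int |u|^2$ term the step sizes $r_{k+1}-r_k$ must be proportional to $R$ (so that the per-step coefficient $(r_{k+1}-r_k)^{-2}$ is of order $R^{-2}$), while to produce $\lambda^L \lesssim R^{-2\ell}$ the iteration count $L$ must grow like $\log_2 R$. The dyadic choice of radii achieves both simultaneously, but only when $\lambda$ can be pushed below $4^{-\ell}$; this is precisely why the preliminary case $\delta \le \delta_0(\ell)$ must be handled separately, using the non-uniform $(1+\delta^2)$-bound of Theorem \ref{thm-2.3}.
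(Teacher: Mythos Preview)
Your proof is correct and follows essentially the same approach as the paper: both dispose of moderate $\delta$ via Theorem~\ref{thm-2.3} and then iterate Lemma~\ref{lemma-3.2} along the dyadic radii $r_k = R(1-2^{-k-1})$, choosing $\varepsilon$ of order $4^{-\ell}$ so that the telescoped geometric sum converges and the residual factor $\lambda^L$ decays like $R^{-2\ell}$. The paper's choice is $\varepsilon = 2^{-2\ell-2}$ with the threshold $\delta^2 \ge 2^{2\ell+2}C_0$, which matches your $\varepsilon \approx 4^{-\ell}/2$ and $C_0/\delta_0^2 \le \varepsilon$ up to harmless constants.
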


\begin{proof}
The proof uses Lemma \ref{lemma-3.2} and an iteration argument.
Let $r_i =R(1-2^{-i})$ for $i=1, 2, \dots$.
It follows from \eqref{3.2-0}  that for $0< \e<1$,
\begin{equation}\label{3.3-1}
\int_{Q_{r_i}}
|\nabla u|^2\, dx
\le \frac{C}{\e (r_{i+1} -r_i)^2}
\int_{Q_{r_{i+1}}}|u|^2\, dx
+ ( \e + C \delta^{-2}) \int_{Q_{r_{i+1}}} |\nabla u|^2\, dx,
\end{equation}
if $r_{i+1} \ge 16$ and 
$$
(1/2) r_{i+1} \le r_i \le  r_{i+1} -8.
$$
It is easy to verify that the conditions on $r_i$ are satisfied if $1\le i \le k$, where $k$ is the largest integer such that
$R2^{-k-1} \ge 8$.
Thus, by an induction argument,
$$
\int_{Q_{r_1}}
|\nabla u|^2\, dx
\le \frac{C_0}{\e}
\sum_{i=1}^k
\frac{ (\e + C_0 \delta^{-2})^{i-1}}{(r_{i+1} -r_i)^2}
\int_{Q_R} |u|^2\, dx
+ (\e + C_0 \delta^{-2})^k
\int_{Q_{r_{k+1}}} |\nabla u|^2\, dx,
$$
where $C_0$ depends only on $d$, $\kappa_1$, $\kappa_2$, and $\omega$.
Since $r_{i+1}-r_i= 2^{-i-1} R$, we see that 
$$
\aligned
\int_{Q_{R/2}} |\nabla u|^2\, dx
 & \le \frac{4C_0}{\e (\e + C_0 \delta^{-2}) R^2 }
\sum_{i=1}^k ( 4\e + 4 C_0 \delta^{-2})^i  \int_{Q_R} |u|^2\, dx\\
& \qquad\qquad
+ (\e + C_0 \delta^{-2})^k \int_{Q_R} |\nabla u|^2\, dx.
\endaligned
$$
We now choose $\e=  2^{-2\ell-2} $. It follows that if $4C_0\delta^{-2}\le 2^{-2\ell} $, then
$$
\int_{Q_{R/2}} |\nabla u|^2\, dx
\le \frac{C}{R^2} \int_{Q_R} |u|^2\, dx
+ (2^{-2\ell} )^k \int_{Q_R} |\nabla u|^2\, dx.
$$
This gives \eqref{3.3-0} for the case $\delta^2\ge  2^{2\ell +2} C_0$, as $2^{k}\approx R$.
Finally, we observe that the remaining case $1< \delta^2 <  2^{2\ell  +2} C_0$ is contained in Theorem \ref{thm-2.3}.
\end{proof}

We now consider the case $\delta=\infty$.
Recall that $u\in H^1(\Omega; \R^d)$ is called a weak solution of $\mathcal{L}_\infty (u)=0$ in $\Omega$ if
$Du =0$ in $\Omega\cap F$ and 
\begin{equation}\label{4.0-0}
\int_{\Omega\cap \omega} 
A\nabla u\cdot \nabla v \, dx =0
\end{equation}
for any $v \in H^1_0(\Omega; \R^d)$ with $D v =0$ in $\Omega\cap F$.

\begin{thm}\label{thm-4.1}
Let $u\in H^1(Q_R; \R^d)$ be a weak solution of $\mathcal{L}_\infty (u)=0$ in $Q_R$ for some $R\ge 32$.
Then, for any $\ell\ge 1$,
\begin{equation}\label{4.1-0}
\int_{Q_{R/2} } |\nabla u|^2\, dx
\le \frac{C}{R^2} \int_{Q_R} |u|^2\, dx
+ \frac{C}{R^{2\ell}} \int_{Q_R} |\nabla u|^2\, dx,
\end{equation}
where $C$ depends only on $d$, $\kappa_1$, $\kappa_2$, $\ell$, and $\omega$.
\end{thm}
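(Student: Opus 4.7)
The plan is to mirror the two-step structure of the proof of Theorem~\ref{thm-3.3}: establish a one-step Caccioppoli inequality with a small coefficient $\varepsilon$ in front of $\int_{Q_R}|\nabla u|^2$, then iterate on the geometric sequence $r_i = R(1-2^{-i})$. The crucial feature of the stiff limit $\delta=\infty$ is that the $C\delta^{-2}$ term in \eqref{3.2-0} is simply absent, so the iteration gives $\varepsilon^k$ in place of $(\varepsilon+C\delta^{-2})^k$. The only genuinely new ingredient is the construction of an admissible test function: it must satisfy $D\phi = 0$ on $Q_R\cap F$ rather than merely being $H^1_0$.

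For the one-step estimate I would fix $R/2\le r\le R-8$, choose $\varphi\in C^1_0(Q_{R-3})$ with $\varphi=1$ on $Q_{r+3}$ and $|\nabla\varphi|\le C(R-r)^{-1}$, and build the admissible test function exactly as in the proof of Lemma~\ref{lemma-3.2}. For each $F_k$ with $\widetilde{F}_k\subset Q_R$, pick $g_k\in\mathcal R$ and $w_k\in H^1_0(\widetilde{F}_k;\R^d)$ with $w_k=u\varphi^2-g_k$ on $F_k$ and $\|w_k\|_{H^1(\widetilde{F}_k)}\le C\|u\varphi^2-g_k\|_{H^1(F_k)}$; extend each $w_k$ by zero to $\R^d$ and set $\phi = u\varphi^2 - \sum_k w_k$. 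Then $\phi\in H^1_0(Q_R;\R^d)$ and $\phi=g_k$ on each $F_k\subset Q_R$, so $D\phi=0$ on $Q_R\cap F$ and $\phi$ is admissible in \eqref{4.0-0}. The key observation peculiar to $\delta=\infty$ is that $u$ is itself rigid on each $F_k$ (since $Du=0$ on $F$ and the components $F_k$ are connected), whence
\[
D(u\varphi^2)_{ij} = \varphi^2 (Du)_{ij} + \varphi\bigl(u_j\partial_i\varphi + u_i\partial_j\varphi\bigr) = \varphi\bigl(u_j\partial_i\varphi + u_i\partial_j\varphi\bigr) \quad\text{on } F_k,
\]
so $\|D(u\varphi^2)\|_{L^2(F_k)}\le C\|u\nabla\varphi\|_{L^2(F_k)}$. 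Choosing $g_k$ to be the projection of $u\varphi^2$ onto $\mathcal R$ in $F_k$, the second Korn inequality then gives $\|w_k\|_{H^1(\widetilde{F}_k)}\le C\|u\nabla\varphi\|_{L^2(F_k)}$.

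Testing $\mathcal L_\infty(u)=0$ against $\phi$, expanding, and using the ellipticity bounds \eqref{e-1}--\eqref{e-2} together with Cauchy--Schwarz over $k$ (the disjoint sets $\widetilde{F}_k\setminus\overline{F}_k$ lie in $Q_R\cap\omega$ and the $F_k$'s are disjoint subsets of $Q_R$) yields
\[
\int_{Q_{r+3}\cap\omega}|Du|^2\,dx \le \varepsilon\int_{Q_R\cap\omega}|Du|^2\,dx + \frac{C}{\varepsilon(R-r)^2}\int_{Q_R}|u|^2\,dx.
\]
Since $Du=0$ on $F$, we have $\int_{Q_R\cap\omega}|Du|^2 = \int_{Q_R}|Du|^2\le \int_{Q_R}|\nabla u|^2$. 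Applying the second Korn inequality on $Q_r$ together with Lemma~\ref{lemma-2.2} (exactly as in \eqref{3.2-1}) to pass from $|Du|^2$ on $Q_{r+3}\cap\omega$ to $|\nabla u|^2$ on $Q_r$ produces the analog of \eqref{3.2-0},
\[
\int_{Q_r}|\nabla u|^2\,dx \le \frac{C}{\varepsilon(R-r)^2}\int_{Q_R}|u|^2\,dx + \varepsilon\int_{Q_R}|\nabla u|^2\,dx,
\]
with no $\delta^{-2}$ term. Iterating this along $r_i=R(1-2^{-i})$ for $1\le i\le k$ with $k\approx\log_2(R/16)$, as in the proof of Theorem~\ref{thm-3.3}, and choosing $\varepsilon = 2^{-2\ell-2}$, one gets $\varepsilon^k \le C R^{-2\ell-2}\le C R^{-2\ell}$ in front of $\int_{Q_R}|\nabla u|^2$, which is precisely \eqref{4.1-0}.

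The main technical obstacle is the test-function construction: one has to produce a function in $H^1_0(Q_R;\R^d)$ that agrees with a rigid displacement on every interior inclusion, while the correction $w_k$ is controlled only by $\|u\nabla\varphi\|_{L^2(F_k)}$ (not by $\|\nabla u\|_{L^2(F_k)}$, which would be useless for the eventual absorption). This control relies essentially on the rigidity $Du|_{F_k}=0$ forced by the $\delta=\infty$ constraint, which makes $D(u\varphi^2)$ lose its $\nabla u$ content on the inclusions; this is the structural replacement of the $\delta^{-2}$ smallness used in the finite-$\delta$ case via Lemma~\ref{lemma-7.1}.
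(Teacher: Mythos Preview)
Your proposal is correct and follows essentially the same approach as the paper: establish the one-step inequality \eqref{4.1-1} via the test-function construction of Lemma~\ref{lemma-3.2}, exploiting $Du|_{F_k}=0$ to kill the $\|Du\|_{L^2(F_k)}$ term (in place of the $\delta^{-2}$ smallness from Lemma~\ref{lemma-7.1}), and then iterate as in Theorem~\ref{thm-3.3}. One minor simplification the paper makes that you could adopt: since $Du=0$ on $Q_r\cap F$, the passage from $\int_{Q_r}|Du|^2$ to $\int_{Q_r\cap\omega}|Du|^2$ is immediate and does not require Lemma~\ref{lemma-2.2}.
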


\begin{proof}
In view of the proof of Theorem \ref{thm-3.3}, it suffices to show
that  for $(R/2)\le r \le R-8$ and $0<\e<1$,
\begin{equation}\label{4.1-1}
\int_{Q_r} |\nabla u|^2\, dx
\le \frac{C}{\e (R-r)^2} 
\int_{Q_R} |u|^2\, dx
+ \e \int_{Q_R} |\nabla u|^2\, dx.
\end{equation}
The proof of \eqref{4.1-1} is similar to that of Lemma \ref{lemma-3.2}.
Indeed, by the second Korn inequality,
\begin{equation}\label{4.1-2}
\int_{Q_r} |\nabla u |^2\, dx 
\le C \int_{Q_{r} \cap \omega} |D u |^2\, dx
+\frac{1}{r^2} \int_{Q_r} |u|^2\,dx,
\end{equation}
where we have used the fact $Du =0$ in $Q_r\cap F$.
Let $\varphi\in C_0^1(Q_{R-3})$ and $\phi\in H^1_0(Q_R; \R^d)$
be the same as in the proof of Lemma \ref{lemma-3.2}.
Note that $\phi|_{F_k}  \in \mathcal{R}$ for each $F_k$ (if $F_k \cap Q_{R-3}
=\emptyset $, then $\phi=0$).
This allows us to use \eqref{4.0-0} to obtain 
$$
\int_{Q_R\cap \omega}
A\nabla u\cdot \nabla \phi\, dx =0.
$$
The rest of the argument is the same as in the proof of Lemma \ref{lemma-3.2}, without the terms involving 
$C\delta^{-2}$. We omit the details.
\end{proof}

\begin{remark}\label{remark-3.1}
Let $1< \delta\le \infty$
and  $u\in H^1(Q_R; \R^d)$ be a weak solution of $\mathcal{L}_\delta (u)=0$ in $Q_R$
for some $R$ sufficiently large.
It follows from Theorems \ref{thm-3.3} and \ref{thm-4.1} (with $\ell=1$) that 
\begin{equation}\label{re-3.1}
\aligned
\sup_{s\le r\le R}
\left(\fint_{Q_r} |\nabla u|^2 \right)^{1/2}
 & \le 
 C \left(\fint_{Q_R} |\nabla u|^2\right)^{1/2} + C \sup_{s\le r\le R}
\inf_{E\in \R^d}
\frac{1}{r}
\left(\fint_{Q_r} |u-E|^2 \right)^{1/2}\\
& \qquad\qquad
+ \frac{C}{s}
\sup_{s\le r\le R}
\left(\fint_{Q_r} |\nabla u|^2 \right)^{1/2}
\endaligned
\end{equation}
for any $s\in [16, R]$,
where $C$ depends only on $d$, $\kappa_1$, $\kappa_2$, and $\omega$.
Choose $s$ so large that $Cs^{-1}\le (1/2)$.
This yields 
\begin{equation}\label{re-3.2}
\sup_{s\le r\le R}
\left(\fint_{Q_r} |\nabla u|^2 \right)^{1/2}
  \le 
 C \left(\fint_{Q_R} |\nabla u|^2\right)^{1/2} + C \sup_{s \le r\le R}
\inf_{E\in \R^d}
\frac{1}{r}
\left(\fint_{Q_r} |u-E|^2 \right)^{1/2}.
\end{equation}
Note that if $1\le r< s$, $|Q_r|^{-1/2} \| u\|_{L^2(Q_r)}
\le C |Q_s|^{-1/2} \| u\|_{L^2(Q_s)}$.
As a result, we obtain 
\begin{equation}\label{re-3.3}
\sup_{1 \le r\le R}
\left(\fint_{Q_r} |\nabla u|^2 \right)^{1/2}
  \le 
 C \left(\fint_{Q_R} |\nabla u|^2\right)^{1/2} + C \sup_{1\le r\le R}
\inf_{E\in \R^d}
\frac{1}{r}
\left(\fint_{Q_r} |u-E|^2 \right)^{1/2},
\end{equation}
where $C$ depends only on $d$, $\kappa_1$, $\kappa_2$, and $\omega$.
\end{remark}


\section{Large-scale estimates}\label{section-4}

In this section we give the proof of Theorem \ref{main-thm-1}.
As we mentioned in Introduction, the approach is based on an idea from \cite{Armstrong-2020}.

Let  $u\in L^1(Q_{2r})$ for some $r\in \mathbb{N}$, define
\begin{equation}\label{5.0-0}
\widehat{u} (z) = \int_{Y+z} u(x)\, dx,
\end{equation}
where $Y=(0, 1)^d$, 
for any $z\in \mathbb{Z}^d $ such that $Y+z\subset Q_{2r}$.

\begin{lemma}\label{lemma-5.1}
Let $u\in H^1(Q_{2r})$ for some $r\in \mathbb{N}$.
Then
\begin{equation}\label{5.1-0}
\left(\fint_{Q_{2r} } |u|^2\right)^{1/2}
\le C \sup_{Y+z\subset Q_{2r}}  |\widehat{u}(z)|
+ C \left(\fint_{Q_{2r}} |\nabla u|^2\right)^{1/2},
\end{equation}
where $C$ depends only on $d$.
\end{lemma}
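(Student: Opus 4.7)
The plan is to use the Poincar\'e inequality on each unit cube $Y+z$ and then sum up. Since $r \in \mathbb{N}$, the unit cubes $\{Y+z : z \in \mathbb{Z}^d,\ Y+z \subset Q_{2r}\}$ tile $Q_{2r}$ up to a set of measure zero (there are exactly $(2r)^d$ of them, corresponding to $z \in \{-r, -r+1, \dots, r-1\}^d$). Note also that $|Y|=1$, so $\widehat{u}(z)$ is literally the average of $u$ over $Y+z$.

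First, for each admissible $z$, the standard Poincar\'e inequality on the unit cube gives
\begin{equation*}
\int_{Y+z} |u - \widehat{u}(z)|^2\, dx \le C \int_{Y+z} |\nabla u|^2\, dx,
\end{equation*}
with $C$ depending only on $d$. Define the piecewise constant function $\widehat{u}(x) = \widehat{u}(z)$ for $x \in Y+z$. Summing the above inequality over all $z$ with $Y+z \subset Q_{2r}$ yields
\begin{equation*}
\int_{Q_{2r}} |u - \widehat{u}|^2\, dx \le C \int_{Q_{2r}} |\nabla u|^2\, dx.
\end{equation*}

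Next, I apply the triangle inequality in $L^2(Q_{2r})$:
\begin{equation*}
\left(\fint_{Q_{2r}} |u|^2\right)^{1/2}
\le \left(\fint_{Q_{2r}} |u - \widehat{u}|^2\right)^{1/2}
+ \left(\fint_{Q_{2r}} |\widehat{u}|^2\right)^{1/2}.
\end{equation*}
The first term on the right is bounded by $C(\fint_{Q_{2r}} |\nabla u|^2)^{1/2}$ by the Poincar\'e estimate above. For the second term, since $\widehat{u}$ takes only the values $\widehat{u}(z)$ for $Y+z \subset Q_{2r}$,
\begin{equation*}
\fint_{Q_{2r}} |\widehat{u}|^2\, dx \le \sup_{Y+z \subset Q_{2r}} |\widehat{u}(z)|^2.
\end{equation*}
Combining the two bounds yields \eqref{5.1-0}.

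There is no real obstacle here: the only point to be careful about is that $r$ is an integer so that the unit cubes fit exactly into $Q_{2r}$ (otherwise boundary effects would produce an extra remainder term). The argument is essentially a localized Poincar\'e inequality plus the trivial bound of the piecewise-constant averaging function $\widehat{u}$ by its supremum.
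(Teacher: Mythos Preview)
Your proof is correct and follows essentially the same approach as the paper: apply Poincar\'e's inequality on each unit cube $Y+z\subset Q_{2r}$ and sum. The paper states this even more directly via the identity $\int_{Y+z}|u|^2=\int_{Y+z}|u-\widehat{u}(z)|^2+|\widehat{u}(z)|^2$, but your version with the piecewise-constant function and the triangle inequality is equivalent.
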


\begin{proof}
This follows by using  Poincar\'e's inequality on each unit cube $Y+z\subset Q_{2r}$ to obtain 
$$
\int_{Y+z} |u|^2\, dx \le |\widehat{u}(z)|^2 + C \int_{Y+z} |\nabla u|^2\, dx
$$
and summing the inequality over $z$.
\end{proof}

For a  function $f$ defined in  $\mathbb{R}^d$ or $\mathbb{Z}^d$, let
\begin{equation}\label{5.2}
\Delta_j f (x)= f(x+e_j) -f(x)
\end{equation}
for $1\le j \le d$, where $e_j =(0, \dots, 1, \dots, 0)$ with $1$ in the $j^{th}$ position.
For a multi-index $\gamma=(\gamma_1, \gamma_2, \dots, \gamma_d)$,
we use the notation  $\Delta^\gamma f =\Delta_1^{\gamma_1} \Delta_2^{\gamma_2} \cdots \Delta_d^{\gamma_d} f$.
Let $\partial^k f = ( \Delta^\gamma f )_{|\gamma|= k}$ and 
$$
|  \partial^kf | =\Big(  \sum_{|\gamma|=k} |\Delta^\gamma f|^2 \Big)^{1/2}
$$
for an integer $k\ge 0$.
The following discrete Sobolev inequality will be needed:
\begin{equation}\label{Sob}
\sup_{z\in \mathbb{Z}^d\cap \overline {Q}_{2R}}
|f(z)|
\le C \sum_{k=0}^N 
R^k \left(\frac{1}{R^d}
\sum_{z\in \mathbb{Z}^d \cap \overline{Q}_{4R} }
|\partial^k f (z)|^2\right)^{1/2},
\end{equation}
where  $R\ge 1$ is an integer, $N=[d/2]+1$,  and $C$ depends only on $d$.
We refer the reader to \cite{Stevenson-1991} for a proof of (\ref{Sob}).

\begin{lemma}\label{lemma-5.3}
Let $u\in H^1(Q_{4R} )$ for some integer $R\ge 2$.
Then, for any integer $r\in [1, 2R]$,
\begin{equation}\label{5.3-0}
\inf_{E\in \mathbb{R}}
\left(\fint_{Q_{2r} } |u - E|^2\right)^{1/2}
\le {Cr}
\sum_{k=0}^N
R^k
\left(\fint_{Q_{4R} } 
|\nabla \partial ^k  u|^2\right)^{1/2}
+ C \left(\fint_{Q_{2r}} |\nabla u|^2\right)^{1/2},
\end{equation}
where $N=[d/2]+1$ and $C$ depends only on $d$.
\end{lemma}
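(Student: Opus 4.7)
The plan is to combine Lemma~\ref{lemma-5.1} with the discrete Sobolev inequality \eqref{Sob} applied to the lattice function $\widehat u$. First, by applying Lemma~\ref{lemma-5.1} to $u-E$ on $Q_{2r}$ and noting that $\widehat{u-E}(z)=\widehat u(z)-E$ for any constant $E$, we reduce the problem to
$$
\inf_{E\in \mathbb{R}}\left(\fint_{Q_{2r}}|u-E|^2\right)^{1/2}\le C\inf_{E}\sup_{Y+z\subset Q_{2r}}|\widehat u(z)-E|+C\left(\fint_{Q_{2r}}|\nabla u|^2\right)^{1/2}.
$$
Fixing any lattice point $z_0$ with $Y+z_0\subset Q_{2r}$ and taking $E=\widehat u(z_0)$ then reduces the first term to the discrete oscillation $\sup_{z}|\widehat u(z)-\widehat u(z_0)|$.

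For the main case $r\le R$, I would connect $z_0$ to any other lattice point $z$ with $Y+z\subset Q_{2r}$ by a path of at most $Cr$ unit coordinate steps remaining inside $\overline{Q}_{2r}$ and telescope to get
$$
|\widehat u(z)-\widehat u(z_0)|\le Cr\sup_{z'\in \overline{Q}_{2r}\cap \mathbb{Z}^d}|\partial^1\widehat u(z')|.
$$
Since $\overline{Q}_{2r}\subset \overline{Q}_{2R}$ when $r\le R$, the discrete Sobolev inequality \eqref{Sob} applied componentwise to $g=\partial^1\widehat u$ gives
$$
\sup_{\overline{Q}_{2R}\cap \mathbb{Z}^d}|\partial^1\widehat u|\le C\sum_{k=0}^N R^k\left(\frac{1}{R^d}\sum_{z\in \overline{Q}_{4R}\cap \mathbb{Z}^d}|\partial^{k+1}\widehat u(z)|^2\right)^{1/2}.
$$
The final ingredient is the commutation $\partial^{k+1}\widehat u(z)=\widehat{\partial^{k+1}u}(z)$, together with Jensen's inequality and the identity $\Delta_j v(x)=\int_0^1\partial_j v(x+te_j)\,dt$ applied (to $v=\partial^k u$) to one of the differences making up $\partial^{k+1}$. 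These yield $|\partial^{k+1}\widehat u(z)|^2\le C\int_{U_z}|\nabla\partial^k u|^2\,dx$ for a bounded neighborhood $U_z$ of $Y+z$, and summing over $z$ with bounded overlap converts the right-hand side into $C\fint_{Q_{4R}}|\nabla\partial^k u|^2\,dx$; multiplying through by $r$ then produces exactly the sum on the right-hand side of the claimed bound.

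For the complementary range $R<r\le 2R$, the estimate follows directly from Poincar\'e's inequality, $\inf_E(\fint_{Q_{2r}}|u-E|^2)^{1/2}\le Cr(\fint_{Q_{2r}}|\nabla u|^2)^{1/2}$, combined with $\fint_{Q_{2r}}|\nabla u|^2\le 2^d\fint_{Q_{4R}}|\nabla u|^2$ (since $|Q_{4R}|\le 2^d|Q_{2r}|$), which is already subsumed by the $k=0$ term of the desired bound. The principal technical care lies in the discrete-to-continuous translation above and in tracking the $O(1)$ expansion of integration domains produced by the fundamental theorem of calculus; since at most $N+1=[d/2]+2$ differences are involved and $R\ge 2$, these $O(1)$ shifts can be absorbed into the constant $C$.
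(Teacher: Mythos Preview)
Your proposal is correct and follows essentially the same approach as the paper: reduce via Lemma~\ref{lemma-5.1} to the discrete oscillation of $\widehat u$, telescope to extract a factor of $r$ times $\sup|\partial\widehat u|$, apply the discrete Sobolev inequality \eqref{Sob} to $\partial\widehat u$, and convert the resulting lattice sums back to continuous integrals of $|\nabla\partial^k u|$ via the fundamental theorem of calculus. The paper makes the same case split (handling large $r$ directly by Poincar\'e), choosing the cutoff at $r\le R-1$ rather than your $r\le R$ so that the shifted Sobolev inequality fits cleanly inside $Q_{4R}$; your remark about absorbing the $O(1)$ domain expansions achieves the same effect.
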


\begin{proof}
We may assume $r\le R-1$; for otherwise,
\eqref{5.3-0}  (with $N=0$)  follows directly from Poincar\'e's inequality.
By \eqref{5.1-0} we have 
\begin{equation}\label{5.3-1}
\aligned
\left(\fint_{Q_{2r}} |u-\widehat{u} (0) |^2\right)^{1/2}
 & \le C \sup_{z\in \mathbb{Z}^d\cap \overline{Q}_{2r}}  |\widehat{u}(z)-\widehat{u} (0) |
+ C \left(\fint_{Q_{2r}} |\nabla u|^2\right)^{1/2}\\
 & \le C r \sup_{z\in \mathbb{Z}^d\cap \overline{Q}_{2r}}  |\partial  \widehat{u} (z)  |
+ C \left(\fint_{Q_{2r}} |\nabla u|^2\right)^{1/2}.
\endaligned
\end{equation}
To bound the first term in the right-hand side of (\ref{5.3-1}),
we use \eqref{Sob} to obtain 
\begin{equation}\label{5.3-2}
\sup_{z\in \mathbb{Z}^d\cap \overline{Q}_{2R-2} }  |\partial  \widehat{u} (z)  |
\le 
C \sum_{k=0}^N
R^k \left(\frac{1}{R^d}
\sum_{z\in \mathbb{Z}^d\cap \overline{Q}_{4R-4}}
|\partial^{k+1} \widehat{u}(z)|^2 \right)^{1/2}.
\end{equation}
Note that 
$$
\aligned
|\Delta_j  \partial^k \widehat{u} (z)|^2
 & \le \int_{Y+z}
|\partial^k u(x+e_j) -\partial^k u(x)|^2\, dx\\
&\le \int_{Y+z}
\int_0^1 |\nabla \partial^k u (x+t e_j)|^2 \, dt\, dx\\
&\le \int_{(0, 2)^d +z} |\nabla \partial^k u(x)|^2\, dx.
\endaligned
$$
It follows that
$$
\left(\frac{1}{R^d}
\sum_{z\in \mathbb{Z}^d\cap \overline{Q}_{4R-4}}
|\partial^{k+1} \widehat{u}(z)|^2 \right)^{1/2}
\le C \left(\fint_{Q_{4R}}
| \nabla  \partial ^k u|^2\, dx\right)^{1/2}.
$$
This, together with \eqref{5.3-1}and \eqref{5.3-2}, gives \eqref{5.3-0}.
\end{proof}

\begin{proof}[\bf Proof of Theorem \ref{main-thm-1}]

Let $u\in H^1(Q_R; \R^d)$ be a weak solution of $\mathcal{L}_\delta (u)=0$ in $Q_R$ for some $R\ge 4$.
Without loss of generality  we may assume $R$ is a large even integer.

We first point out that \eqref{Lip-e-1} follows from \eqref{Lip-e}.
Indeed, let $1\le r\le R-3$.  Note that for any $\phi \in \mathcal{R}$, we have $\mathcal{L}_\delta (u-\phi)=0$ in $Q_R$. 
It follows that
$$
\aligned
\left(\fint_{Q_r} |D u|^2 \right)^{1/2}
  &\le C \left(\fint_{Q_{R-3}} |\nabla (u-\phi)|^2\right)^{1/2}\\
  &\le C \left(\fint_{Q_{R-3}} |Du|^2 \right)^{1/2},
  \endaligned 
  $$
  where we have chosen $\phi\in \mathcal{R}$ so that the last inequality holds.
  This, together with \eqref{ex-1a}, gives \eqref{Lip-e-1}.
  The rest of the proof is devoted to (\ref{Lip-e}.
  In view of \eqref{ex-1a} it suffices to bound the left-hand side of \eqref{Lip-e})
  by the $L^2$ average of $|\nabla u|$ over $Q_R$.
  
\medskip

\noindent{\bf Case I:   $0 \le \delta\le 1$.}
Note that   $\mathcal{L}_\delta (\Delta^\gamma u )=0$ in $Q_{R-2k}$
for any multi-index $\gamma$ with $|\gamma|=k$.
It follows from Theorem \ref{thm-2.3} that
$$
\fint_{Q_{\rho }} |\nabla \partial^k  u |^2
\le \frac{C}{\rho ^2} \fint_{Q_{2\rho}}
| \partial^k   u|^2\, dx
$$
for $4\le \rho< (R-2k)/2$, where we have used the condition $0\le\delta\le 1$.
 This, together with the observation  that 
\begin{equation}\label{5.4-1}
\fint_{Q_{2\rho} } |\partial^k u|^2\, dx
\le C \fint_{Q_{2\rho+2}} |\nabla \partial^{k-1} u|^2\, dx
\end{equation}
for $k \ge 1$,   yields
\begin{equation}\label{5.4-2}
\fint_{Q_{\rho }} |\nabla \partial^k  u |^2
\le \frac{C}{\rho^2}  \fint_{Q_{2\rho+2}} |\nabla \partial^{k-1} u|^2\, dx.
\end{equation}
By induction we obtain
\begin{equation}\label{5.4-3}
\fint_{Q_{cR}}
|\nabla \partial^k u|^2\, dx
\le \frac{C}{\rho^{2k}}
\fint_{Q_R} |\nabla u|^2,
\end{equation}
where $C$ and $c$ depend only on $d$, $k$, $\kappa_1$, $\kappa_2$, and $\omega$.
By combining \eqref{5.4-3} with  \eqref{5.3-0} we see that
for any $r\in [1, cR]$,
\begin{equation}\label{5.4-4}
\inf_{E\in \R^d}
\frac{1}{r}
\left(\fint_{Q_r} |u-E|^2 \right)^{1/2}
\le C \left(\fint_{Q_R} |\nabla u|^2 \right)^{1/2}
+\frac{C}{r}
\left(\fint_{Q_r} |\nabla u|^2\right)^{1/2}.
\end{equation}
By Poincar\'e's inequality we see that the inequality above also holds for $r\in [cR, R]$.
Hence, if $1<s<R$, 
\begin{equation}\label{5.4-4-0}
\sup_{s\le r\le R} 
\inf_{E\in \R^d}
\frac{1}{r}
\left(\fint_{Q_r} |u-E|^2 \right)^{1/2}
\le C \left(\fint_{Q_R} |\nabla u|^2 \right)^{1/2}
+\frac{C}{s} \sup_{s\le r\le R} 
\left(\fint_{Q_r} |\nabla u|^2\right)^{1/2}.
\end{equation}
Note  that by Theorem \ref{thm-2.3}, 
\begin{equation}\label{5.4-4-00}
\sup_{s\le r\le R}
\left(\fint_{Q_r} |\nabla u|^2\right)^{1/2}
\le C \left(\fint_{Q_R} |\nabla u|^2\right)^{1/2}
+ C \sup_{s\le r \le R}
\inf_{E\in \R^d}
\frac{1}{r} 
\left(\fint_{Q_r} |u-E|^2\right)^{1/2}.
\end{equation}
Thus,
$$
\sup_{s\le r\le R}
\left(\fint_{Q_r} |\nabla u|^2\right)^{1/2}
\le C \left(\fint_{Q_R} |\nabla u|^2 \right)^{1/2}
+\frac{C}{s} \sup_{s\le r\le R} 
\left(\fint_{Q_r} |\nabla u|^2\right)^{1/2}.
$$
Choose $s>1$ so large that $C s^{-1}\le (1/2)$.
This leads to 
$$
\sup_{s\le r\le R}
\left(\fint_{Q_r} |\nabla u|^2\right)^{1/2}
\le C \left(\fint_{Q_R} |\nabla u|^2 \right)^{1/2}.
$$
The estimate for the case $1\le  r< s$ follows from the case $r=s$.

\medskip

\noindent{\bf Case II: $1<\delta\le \infty$.}
As in the case $0\le \delta\le 1$, $\mathcal{L}_\delta (\Delta^\gamma u)=0$ in $Q_{R-2k}$ for any multi-index $\gamma$
with $|\gamma|=k$.
It follows from Theorems \ref{thm-3.3} and \ref{thm-4.1} (with $\ell=1$) that
\begin{equation}\label{5.4-10}
\fint_{Q_{\rho}}
|\nabla  \partial^k u|^2
\le \frac{C}{\rho^2}
\fint_{Q_{2\rho} } | \partial^k u|^2 +\frac{C}{\rho^2}
\fint_{Q_{2\rho} } |\nabla \partial^k u  |^2
\end{equation}
if $16 \le \rho\le  (R-2k)/2$.
This, together with \eqref{5.4-1} and  a simple observation that
$$
\fint_{Q_{2\rho}}
|\nabla \partial^k u|^2
\le C \fint_{Q_{2\rho+2}}
|\nabla \partial^{k-1} u|^2,
$$
gives (\ref{5.4-2}).
As a result, the inequality (\ref{5.4-4-0})
continues to hold for the case $1<\delta\le \infty$.
In view of Remark \ref{remark-3.1},
the inequality (\ref{5.4-4-00}) also holds for $1<\delta\le \infty$. 
The rest of the proof is the same as in Case I.
\end{proof}

It follows from Theorem \ref{main-thm-1} and Poincar\'e's inequality  that if $\mathcal{L}_\delta (u)=0$ in $Q_R$ for some 
$R\ge 1$, then
\begin{equation}\label{L-inf}
\sup_{1\le r\le R} \left(\fint_{Q_r} |u|^2\right)^{1/2}
\le C \left(\fint_{Q_R} |u|^2 \right)^{1/2}
+ C R^2 \left(\fint_{Q_R} |\nabla  u|^2\right)^{1/2},
\end{equation}
where $C$ depends only on $d$, $\kappa_1$, $\kappa_2$, and $\omega$.


\section{Local Lipschitz  estimates}\label{section-5}

Throughout this section we will assume $A$ satisfies the elasticity condition (\ref{ellipticity}) and
H\"older continuity condition (\ref{smoothness}).
The periodicity condition is not needed.
For $0<r\le 4$, let
\begin{equation}\label{B}
\aligned
Q_r^+  & =\big\{ x=(x^\prime, x_d) \in Q_r: x_d> \psi (x^\prime) \big\},\\
Q_r^-  & =\big\{ x=(x^\prime, x_d) \in Q_r: x_d< \psi (x^\prime) \big\},\\
I_r  &  =\big\{ x=(x^\prime, x_d) \in Q_r: x_d= \psi (x^\prime) \big\},\\
\endaligned
\end{equation}
where $\psi: \R^{d-1} \to \R$ is a $C^{1, \sigma}$ function for some $\sigma \in (0, 1)$
such that $\psi(0)=0$ and $\|\nabla \psi \|_{C^{1, \sigma}(\R^{d-1})} \le M_1$.
Let $0<\delta<\infty$ and  $u\in H^1(Q_r; \R^d)$ be a solution of 
\begin{equation}\label{local-1}
\left\{
\aligned
 -\text{\rm div} (A\nabla u) & = \delta^{-2} f  &\quad & \text{ in } Q_r^+,\\
 -\text{\rm div} (A\nabla u)  & =f  & \quad &  \text{ in } Q_r^-,\\
 \delta^2  \frac{\partial u}{\partial \nu_+}  & = \frac{\partial u}{\partial \nu_-}  &\quad & \text{ on }  I_r,
\endaligned
\right.
\end{equation}
where $\frac{\partial u}{\partial \nu_\pm}
=n\cdot A(\nabla u)_\pm$ and $\pm$ indicates the limit taken from $Q_r^\pm$, respectively.
If $\delta=0$, by a solution of \eqref{local-1}, we mean that $-\text{\rm div} (A\nabla u)=f$ in $Q_r^-$,
$-\text{\rm div} (A\nabla u)=0$ in $Q_r^+$, and that $\frac{\partial u}{\partial \nu_-} =0$ on $I_r$.
If $\delta=\infty$, the equation \eqref{local-1} is understood in the sense that $u|_{Q_r^+} \in \mathcal{R}$ and 
$-\text{\rm div}(A\nabla u)=f$ in $Q_r^-$.

\begin{lemma}\label{local-lemma-1}
Assume that $A$ satisfies \eqref{ellipticity} and \eqref{smoothness}.
Let $0\le \delta\le 1$ and $u\in H^1(Q_r; \R^d)$ be a weak solution of \eqref{local-1}  for some $0<r \le 4$.
Then
\begin{equation}\label{local-2}
\left\{
\aligned
\|\nabla u\|_{L^\infty (Q_{r/2}^-) }
&\le \frac{C}{r}
\left\{ 
 \delta^2 \left(\fint_{Q_r^+} |u|^2 \right)^{1/2}
+  \left(\fint_{Q_r^-} |u|^2 \right)^{1/2}
+r^2  \left( \fint_{Q_r} |f|^p \right)^{1/p}\right\},\\
\| \nabla u\|_{L^\infty (Q_{r/2}^+ ) }
 & \le \frac{C}{r}
\left\{
\left(\fint_{Q_r} |u|^2\right)^{1/2}
+ r^2 \left(\fint_{Q^-_r} |f|^p \right)^{1/p} 
+ \delta^{-2} r^2 
\left(\fint_{Q_r^+} |f|^p \right)^{1/p} \right\},
\endaligned
\right.
\end{equation}
where $p>d$ and $C$ depends only on $d$, $ p$, $\kappa_1$, $\kappa_2$, $\sigma$, $M_0$, and $M_1$.
\end{lemma}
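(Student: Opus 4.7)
The estimate is a boundary/interface Lipschitz bound for an elliptic transmission problem, and I would prove it by a Campanato iteration that carefully tracks the dependence on $\delta$. The first step is to flatten the interface: using the $C^{1,\sigma}$ graph $\psi$, apply a change of variables so that $I_r$ becomes $\{x_d=0\}\cap Q_r$. In these new coordinates the transformed coefficient is still H\"older continuous of exponent $\sigma$, the ellipticity constants are preserved up to a fixed factor, and the transmission relation $\delta^2(\partial_\nu u)_+=(\partial_\nu u)_-$ keeps its form with $n=e_d$. All remaining work is carried out in this flattened geometry.

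Next, for fixed $y_0\in\overline{Q_{r/2}}$ and $\rho\in(0,r/4]$ I would compare $u$ on $Q_\rho(y_0)$ with the solution $v$ of the constant-coefficient homogeneous transmission problem: $-\text{\rm div}(A(y_0)\nabla v)=0$ on each half of $Q_\rho(y_0)$, with the same jump relation across the flat interface, and $v=u$ on $\partial Q_\rho(y_0)$. The error $w=u-v$ lies in $H^1_0(Q_\rho(y_0);\R^d)$; testing the weak formulation of its equation against $w$ itself and using $0\le\delta^2\le 1$ together with the H\"older continuity of $A$ yields a bound of the form
\[
\int_{Q_\rho^-(y_0)}|\nabla w|^2+\delta^2\int_{Q_\rho^+(y_0)}|\nabla w|^2\le C\rho^{2\sigma}\int_{Q_\rho(y_0)}|\nabla u|^2+C\rho^{d-2d/p+2}\bigl(\|f\|_{L^p(Q_\rho^-)}^2+\delta^{-4}\|f\|_{L^p(Q_\rho^+)}^2\bigr),
\]
where the $\delta^{-4}$ on the $+$-side source is the square of the rescaled right-hand side $\delta^{-2}f$. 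Combined with a decay estimate for the reference solution $v$ of the form
\[
\int_{Q_{\theta\rho}^\pm(y_0)}\bigl|\nabla v-(\nabla v)_{Q_{\theta\rho}^\pm(y_0)}\bigr|^2\le C\theta^{d+2}\int_{Q_\rho^\pm(y_0)}|\nabla v|^2,\qquad \theta\in(0,1/4),
\]
this gives a Campanato recursion for $\Phi^\pm(\rho):=\inf_c\int_{Q_\rho^\pm(y_0)}|\nabla u-c|^2$. Iterating $\rho\mapsto\theta\rho$ and invoking Campanato's characterization of Lipschitz functions produces pointwise bounds on $|\nabla u(y_0)|$ in the form \eqref{local-2}. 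The asymmetric factor $\delta^2$ in front of $(\fint_{Q_r^+}|u|^2)^{1/2}$ in the first line reflects that the $+$-side enters the energy functional with a weight $\delta^2$, so when $y_0$ lies on the $-$ side the transmission relation automatically damps the $+$-side contribution by that factor.

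The main obstacle is establishing the $\delta$-uniform decay estimate for $v$, including the singular limit $\delta=0$. For constant coefficient $A(y_0)$ and a flat interface one can analyze $v$ explicitly, e.g.\ by reflection across $\{x_d=0\}$ combined with the Dirichlet-to-Neumann map, reducing the problem to two decoupled elliptic systems whose Schauder/Campanato decay is classical; the jump condition with $0\le\delta\le 1$ provides the one-sided control needed to make the constants independent of $\delta$. In the limit $\delta=0$ the sides fully decouple: $v|_{Q_\rho^-}$ solves a homogeneous Neumann problem and $v|_{Q_\rho^+}$ a Dirichlet problem with trace $v|_{I_\rho}$ inherited from the $-$ side, both enjoying classical boundary Lipschitz estimates with constants depending only on $d$, $\kappa_1$, $\kappa_2$, $\sigma$, $M_0$, and $M_1$. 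Once this uniform decay is in hand, the Campanato iteration sketched above yields \eqref{local-2} with the precise $\delta$-dependence claimed.
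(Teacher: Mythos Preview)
Your Campanato-with-frozen-coefficients plan is a natural first attempt, but it is \emph{not} the route the paper takes, and your scheme carries a real difficulty that the paper's argument avoids entirely.

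The paper never analyzes a transmission problem at all. Instead, after flattening, it treats the two sides as \emph{decoupled} classical boundary value problems: on $Q_\rho^-$ the function $u$ solves $-\text{div}(A\nabla u)=f$ with Neumann data $(\partial u/\partial\nu)_-=\delta^2(\partial u/\partial\nu)_+$, so the classical $C^{1,\tau}$ Neumann Schauder estimate gives
\[
\|\nabla u\|_{C^\tau(Q_s^-)}\le C\delta^2\|\nabla u\|_{C^\tau(Q_{t_1}^+)}+\text{(data)}.
\]
On $Q_\rho^+$ one uses the $C^{1,\tau}$ Dirichlet Schauder estimate with the trace of $u$ on $I_\rho$ as boundary data, obtaining $\|\nabla u\|_{C^\tau(Q_{t_1}^+)}\le C\|\nabla u\|_{C^\tau(Q_t^-)}+\text{(data)}$. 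Substituting one into the other yields a self-map
\[
\|\nabla u\|_{C^\tau(Q_s^-)}\le C\delta^2\|\nabla u\|_{C^\tau(Q_t^-)}+\text{(data)},
\]
which is a contraction once $\delta$ is small; iterating over a dyadic sequence $s_i=1-2^{-i}$ gives the first line of \eqref{local-2}, and the second line then follows from the Dirichlet estimate on $Q^+$. The case of $\delta$ bounded away from $0$ is classical (piecewise H\"older coefficients). The point is that the only Schauder estimates used are the standard, $\delta$-free, one-sided Neumann and Dirichlet ones; the explicit factor $\delta^2$ comes solely from the transmission relation and is what drives the contraction.

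Your plan, by contrast, requires two ingredients that are not free. First, the ``main obstacle'' you flag---$\delta$-uniform $C^{1,\alpha}$ decay for the constant-coefficient transmission comparison $v$---is essentially the heart of the lemma; your reflection/DtN sketch is plausible but is a genuine sublemma, not a triviality. Second, and more seriously, your perturbation estimate only controls the weighted quantity $\delta^2\int_{Q_\rho^+}|\nabla w|^2$ on the $+$ side, while the right-hand side contains the \emph{unweighted} $\int_{Q_\rho}|\nabla u|^2$. To run a Campanato iteration for $\Phi^+(\rho)$ you would have to divide by $\delta^2$, producing a perturbation term $C\delta^{-2}\rho^{2\sigma}\int_{Q_\rho}|\nabla u|^2$ that cannot be absorbed uniformly as $\delta\to 0$. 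The paper's Neumann/Dirichlet decoupling sidesteps both issues: there is no frozen-coefficient transmission problem to analyze, and the $+$-side estimate is obtained \emph{after} the $-$-side estimate, via a Dirichlet bound whose constant is manifestly $\delta$-independent.
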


\begin{proof}
The case $0< \delta_0\le \delta\le 1$ with a constant $C$ depending on $\delta_0$ 
follows from the classical  results on Lipschitz estimates for elliptic systems 
with piecewise H\"older continuous  coefficients.
Indeed, since $\psi$ is $C^{1, \sigma}$,  the problem may be reduced to the case
$\psi=0$ by flatting the boundary.
One may further reduce the problem to the case of constant coefficients by a Campanato perturbation argument.
We refer the reader to \cite{Li-2000, Dong-2012}  and their  references for more recent development.

We now  treat the case where $\delta$ is small.
By rescaling we may assume $r=1$.
Let $0<\rho<1$ and $0<\tau < \sigma$. 
Since $\text{\rm div} (A\nabla u)=f$ in $Q_1^-$,
by the classical $C^{1, \tau} $ estimates for Neumann problems,
\begin{equation}\label{local-3}
\| \nabla u\|_{C^{\tau}(Q^-_{\rho/2})}
\le C\Big \|\frac{\partial u}{\partial \nu_-}\Big \|_{C^{\tau}  (I_{\rho})}
+\frac{C}{\rho^{1+\tau} }
\left\{
\left(\fint_{Q_\rho^-} |u|^2\right)^{1/2}
+ \rho^2 \left(\fint_{Q_\rho^-} |f|^p \right)^{1/p} \right\}.
\end{equation}
Let $1/2\le s<t<1$.
By covering  $Q_{s}^-$ with cubes of size $\rho/2=c(t-s)$ and applying (\ref{local-3}) and interior $C^{1, \tau}$ estimates,
we may deduce that
\begin{equation}\label{local-4}
\aligned
\| \nabla u\|_{C^{\tau} (Q_s^-)}
 & \le C \Big\| \frac{\partial u}{\partial \nu_-} \Big\|_{C^\tau (I_{t_1} )}
+ C (t-s)^{-\frac{d}{2} -1-\tau} \left\{
 \left(\fint_{Q_1^-} |u|^2\right)^{1/2} + \left(\fint_{Q_1^-} |f|^p \right)^{1/p} \right\}\\
&\le C \delta^2 \| \nabla u\|_{C^\tau(Q_{t_1} ^+)}
+ C (t-s)^{-\frac{d}{2} -1-\tau} \left\{
 \left(\fint_{Q_1^-} |u|^2\right)^{1/2} + \left(\fint_{Q_1^-} |f|^p \right)^{1/p} \right\},\\
\endaligned
\end{equation}
where  $t_1=(t+s)/2$
and
we have used the relation $\frac{\partial u}{\partial\nu_-} =\delta^2 \frac{\partial u}{\partial \nu_+}$ on $I_1$ for the last inequality.
Since $-\text{\rm div}(A\nabla u)=\delta^{-2} f $ in $Q_1^+$ for $\delta>0$,
a similar argument using $C^{1, \tau}$ estimates for the Dirichlet problem gives
\begin{equation}\label{local-5}
 \| \nabla u\|_{C^\tau (Q_{t_1}^+)}
\le C  \|\nabla u\|_{C^\tau (Q_t^-)}
+ C (t-s)^{-\frac{d}{2}-1-\tau }
\left\{
  \left(\fint_{Q_1^+} |u|^2\right)^{1/2}
 +\delta^{-2}  \left( \fint_{Q_1^+} |f|^p \right)^{1/p} \right\}.
\end{equation}
By combining (\ref{local-4}) with (\ref{local-5}) it follows that 
\begin{equation}\label{local-6}
\aligned
\| \nabla u\|_{C^{\tau} (Q_s^-)}
 & \le  C \delta^2  \|\nabla u\|_{C^\tau (Q_t^-)}
+ C (t-s)^{-\frac{d}{2}-1-\tau }
\left\{
 \left(\fint_{Q_1^-} |u|^2\right)^{1/2}
 + \delta^2
\left(\fint_{Q_1^+} |u|^2\right)^{1/2} \right\}.\\
&\qquad\qquad
+ C (t-s)^{-\frac{d}{2} -1 -\tau} 
\left(\fint_{Q_1} |f|^p \right)^{1/p},
\endaligned
\end{equation}
which also holds for the case $\delta=0$.
 Let $s_i=1-2^{-i}$.
 By taking $s=s_i$ and $t=s_{i+1}$ in \eqref{local-6} and using iteration,
 we see that
 \begin{equation}\label{local-7}
 \aligned
 &  \| \nabla u\|_{C^\tau (Q_{1/2}^-)}\\
 & \le 
 C \sum_{i=1}^N (s_{i+1}-s_i)^{-\frac{d}{2} -1-\tau} \delta^{2 (i-1)}
 \left\{
 \left(\fint_{Q_1^-} |u|^2\right)^{1/2}
 + \delta^2
\left(\fint_{Q_1^+} |u|^2\right)^{1/2}
+ \left(\fint_{Q_1} |f|^p \right)^{1/p}  \right\}\\
& \qquad\qquad
+ C \delta^{2N} \| \nabla u\|_{C^{\tau} (Q_{s_{N+1}})}
\endaligned
\end{equation}
for any $N\ge 1$.
Note that $s_{i+1}-s_i= 2^{-i-1}$.
It follows that if $2^{\frac{d}{2} +1 +\tau} \delta^2 \le (1/2)$, we may let $N \to \infty$ in \eqref{local-7}
to obtain the first inequality in \eqref{local-2} with $r=1$.
The second inequality follows from the the  first and \eqref{local-5}.
\end{proof}

\begin{remark}\label{remark-5.1a}
We may replace (\ref{local-2}) by 
\begin{equation}\label{local-2b}
\left\{
\aligned
\|\nabla u\|_{L^\infty (Q_{r/2}^-) }
&\le  C 
\left\{ 
 \delta^2 \left(\fint_{Q_r^+} |\nabla u|^2 \right)^{1/2}
+  \left(\fint_{Q_r^-} |\nabla u|^2 \right)^{1/2}
+r  \left( \fint_{Q_r} |f|^p \right)^{1/p}\right\},\\
\| \nabla u\|_{L^\infty (Q_{r/2}^+ ) }
 & \le  C 
\left\{
\left(\fint_{Q_r} |\nabla u|^2\right)^{1/2}
+ r  \left(\fint_{Q^-_r} |f|^p \right)^{1/p} 
+ \delta^{-2} r
\left(\fint_{Q_r^+} |f|^p \right)^{1/p} \right\},
\endaligned
\right.
\end{equation}
To see this, in the proof of Lemma \ref{local-lemma-1},
one  replaces $u$ in \eqref{local-3} and (\ref{local-4}) by $u-E$ and applies 
Poincar\'e's inequality.
\end{remark}

\begin{remark}\label{remark-5.1}
Under the same assumptions as in Lemma \ref{local-lemma-1}, we have 
\begin{equation}\label{local-2-r}
\left\{
\aligned
\| u\|_{L^\infty (Q_{r/2}^-) }
&\le C
\left\{ 
 \delta^2 \left(\fint_{Q_r^+} |u|^2 \right)^{1/2}
+  \left(\fint_{Q_r^-} |u|^2 \right)^{1/2}
+r^2  \left( \fint_{Q_r} |f|^p \right)^{1/p}\right\},\\
\|  u\|_{L^\infty (Q_{r/2}^+ ) }
 & \le  C 
\left\{
\left(\fint_{Q_r} |u|^2\right)^{1/2}
+ r^2 \left(\fint_{Q^-_r} |f|^p \right)^{1/p} 
+ \delta^{-2} r^2 
\left(\fint_{Q_r^+} |f|^p \right)^{1/p} \right\},
\endaligned
\right.
\end{equation}
This follows readily from \eqref{local-2} and the Mean Value Theorem.
\end{remark}

The next lemma treats the case $1<\delta\le \infty$.

\begin{lemma}\label{local-lemma-2}
Assume that $A$ satisfies (\ref{ellipticity} and (\ref{smoothness}).
Let $1<\delta\le \infty$ and $u\in H^1(Q_r; \R^d)$ be a weak solution of \eqref{local-1}  for some $0<r \le 4$.
Then
\begin{equation}\label{local-2-0}
\left\{
\aligned
\|\nabla u\|_{L^\infty (Q_{r/2}^+) }
&\le \frac{C}{r}
\left\{ 
 \delta^{-2} \left(\fint_{Q_r^-} |u|^2 \right)^{1/2}
+  \left(\fint_{Q_r^+} |u|^2 \right)^{1/2}
+  \delta^{-2} r^2 \left(\fint_{Q_r} |f|^p \right)^{1/p} 
\right\},\\
\| \nabla u\|_{L^\infty (Q_{r/2}^- ) }
 & \le \frac{C}{r}
 \left\{
\left(\fint_{Q_r} |u|^2\right)^{1/2}
+ \delta^{-2} r^2 \left(\fint_{Q_r^+} |f|^p \right)^{1/p}
+ r^2 \left(\fint_{Q_r^-} |f|^ p \right)^{1/p} \right\},
\endaligned
\right.
\end{equation}
where $C$ depends only on $d$,  $p$, $\kappa_1$, $\kappa_2$, $\sigma$, $M_0$, and $M_1$.
\end{lemma}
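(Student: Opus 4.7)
My strategy is to parallel the proof of Lemma~\ref{local-lemma-1}, but with the roles of $Q_r^+$ and $Q_r^-$ reversed so that the small factor in the transmission relation on $I_r$ is $\delta^{-2}$ rather than $\delta^2$. For any fixed $\delta_0<\infty$, the range $1<\delta\le \delta_0$ is covered by classical Lipschitz estimates for elliptic systems with piecewise H\"older continuous coefficients (flatten $I_r$ via $\psi$ and run a Campanato perturbation argument); the resulting constant depends on $\delta_0$, which is absorbed into $C$. Hence it suffices to treat $\delta>\delta_0$ for some sufficiently large $\delta_0$ and to handle $\delta=\infty$ separately. After rescaling we may take $r=1$.

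The iteration alternates between two one-sided $C^{1,\tau}$ estimates. On the \emph{stiff} side $Q_1^+$, the equation is $-\mathrm{div}(A\nabla u)=\delta^{-2}f$ with Neumann data $\partial u/\partial\nu_+=\delta^{-2}\,\partial u/\partial\nu_-$ on $I_1$, so the standard $C^{1,\tau}$ Neumann estimate gives, for $1/2\le s<t<1$ and $t_1=(s+t)/2$,
\begin{equation*}
\|\nabla u\|_{C^\tau(Q_s^+)} \le C\delta^{-2}\|\nabla u\|_{C^\tau(Q_{t_1}^-)} + C(t-s)^{-\frac{d}{2}-1-\tau}\Bigl\{\|u\|_{L^2(Q_1^+)} + \delta^{-2}\|f\|_{L^p(Q_1^+)}\Bigr\}.
\end{equation*}
On the \emph{soft} side $Q_1^-$, using continuity of $u$ across $I_1$ (so that $\|u\|_{C^{1,\tau}(I_{t_1})}$ is controlled by $\|\nabla u\|_{C^\tau(Q_t^+)}$ plus lower-order $L^2$ terms), the $C^{1,\tau}$ Dirichlet estimate yields
\begin{equation*}
\|\nabla u\|_{C^\tau(Q_{t_1}^-)} \le C\|\nabla u\|_{C^\tau(Q_t^+)} + C(t-s)^{-\frac{d}{2}-1-\tau}\Bigl\{\|u\|_{L^2(Q_1^-)} + \|f\|_{L^p(Q_1^-)}\Bigr\}.
\end{equation*}
Substituting the second into the first produces a self-improving inequality of the form $\|\nabla u\|_{C^\tau(Q_s^+)} \le C\delta^{-2}\|\nabla u\|_{C^\tau(Q_t^+)} + \mbox{l.o.t.}$, which, when iterated along $s_i=1-2^{-i}$ exactly as in \eqref{local-7}, collapses as soon as $C\delta^{-2}\le 1/2$ and yields the first bound of \eqref{local-2-0}. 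Inserting that bound back into the Dirichlet estimate on $Q_1^-$ delivers the second.

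The case $\delta=\infty$ must be treated directly because the Neumann iteration degenerates. Here $u|_{Q_1^+}\in\mathcal{R}$, say $u|_{Q_1^+}=E+Bx$ with $B^T=-B$, so $\nabla u$ is a constant skew matrix on $Q_1^+$ and Poincar\'e's inequality on the non-degenerate Lipschitz set $Q_1^+$ gives $|B|\le C\|u\|_{L^2(Q_1^+)}$, which is the first inequality (all $\delta^{-2}$ terms being absent). The boundary trace $u|_{I_1}=E+Bx$ is smooth, and applying the $C^{1,\tau}$ Dirichlet estimate to $-\mathrm{div}(A\nabla u)=f$ on $Q_1^-$ with this boundary data yields the second inequality, the boundary norm being controlled by $\|u\|_{L^2(Q_1^+)}$ via Korn. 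The main technical delicacy is keeping the $\delta$-dependence fully explicit through the iteration so that the constants remain uniform as $\delta\to\infty$; the degeneracy at $\delta=\infty$ is otherwise benign thanks to the rigid-displacement structure.
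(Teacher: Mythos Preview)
Your proposal is correct and follows essentially the same route as the paper: swap the roles of $Q_r^+$ and $Q_r^-$ in the Lemma~\ref{local-lemma-1} iteration so that the small parameter becomes $\delta^{-2}$, and treat $\delta=\infty$ separately via the rigid-displacement structure on $Q_r^+$ together with a Dirichlet estimate on $Q_r^-$. One small terminological slip: the bound $|B|\le C\|u\|_{L^2(Q_1^+)}$ for $u=E+Bx\in\mathcal{R}$ is not Poincar\'e's inequality (which runs the other way) but rather the equivalence of norms on the finite-dimensional space $\mathcal{R}$ restricted to the fixed Lipschitz set $Q_1^+$ --- this is the ``simple rescaling'' the paper invokes.
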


\begin{proof}
The case $1<\delta< \infty$ follows from  the proof of Lemma \ref{local-lemma-1}
by interchanging $Q_r^+$ with $Q_r^-$ and $\delta^2$ with $\delta^{-2}$.
Recall that if $\delta=\infty$, $u|_{Q_r^+} \in \mathcal{R}$.
 As a result, the first inequality in \eqref{local-2-0} holds by a simple
rescaling, while the second  follows from the Lipschitz estimate for the Dirichlet problem.
\end{proof}

\begin{remark}\label{remark-5.2b}
One may replace \eqref{local-2-0} by 
\begin{equation}\label{local-2-0b}
\left\{
\aligned
\|\nabla u\|_{L^\infty (Q_{r/2}^+) }
&\le C 
\left\{ 
 \delta^{-2} \left(\fint_{Q_r^-} |\nabla u|^2 \right)^{1/2}
+  \left(\fint_{Q_r^+} |\nabla u|^2 \right)^{1/2}
+  \delta^{-2} r \left(\fint_{Q_r} |f|^p \right)^{1/p} 
\right\},\\
\| \nabla u\|_{L^\infty (Q_{r/2}^- ) }
 & \le C 
 \left\{
\left(\fint_{Q_r} |\nabla u|^2\right)^{1/2}
+ \delta^{-2} r  \left(\fint_{Q_r^+} |f|^p \right)^{1/p}
+ r \left(\fint_{Q_r^-} |f|^ p \right)^{1/p} \right\},
\endaligned
\right.
\end{equation}
This follows from Remark \ref{remark-5.1}.
\end{remark}

\begin{remark}\label{remark-5.2}
It follows from \eqref{local-2-0} that if $1<\delta\le \infty$,
\begin{equation}\label{local-3-0}
\left\{
\aligned
\|\nabla u\|_{L^\infty (Q_{r/2}^+) }
&\le C
\left\{ 
 \delta^{-2} \left(\fint_{Q_r^-} |u|^2 \right)^{1/2}
+  \left(\fint_{Q_r^+} |u|^2 \right)^{1/2}
+  \delta^{-2} r^2 \left(\fint_{Q_r} |f|^p \right)^{1/p} 
\right\},\\
\| \nabla u\|_{L^\infty (Q_{r/2}^- ) }
 & \le C
 \left\{
\left(\fint_{Q_r} |u|^2\right)^{1/2}
+ \delta^{-2} r^2 \left(\fint_{Q_r^+} |f|^p \right)^{1/p}
+ r^2 \left(\fint_{Q_r^-} |f|^ p \right)^{1/p} \right\}.
\endaligned
\right.
\end{equation}
\end{remark}

The following theorem provides the local Lipschitz estimate for
solutions of $\mathcal{L}_\delta (u)=f$ in $Q(x_0, r)=x_0 +Q_r$
(if $\delta=0$, we assume $f=0$ in $Q(x_0, r)\cap F$).

\begin{thm}\label{local-thm-0}
Assume that $A$ satisfies \eqref{ellipticity} and \eqref{smoothness}.
Let $0\le \delta\le \infty$ and $u\in H^1(Q (x_0, r); \R^d)$ be a weak solution of  $\mathcal{L}_\delta (u)=f$ in 
$Q(x_0, r) $  for some $x_0\in \R^d$ and  $0<r\le 2$,
where $f\in L^p (Q(x_0, r); \R^d)$ for some $p>d$.
Then 
\begin{equation}\label{5.5-0}
\aligned
 & |u(x_0)| + r|\nabla u (x_0)|\le \\
 &
 \left\{
 \aligned
  &  C \left(\fint_{Q(x_0, r) } |\Lambda_{\delta^2} u|^2 \right)^{1/2} 
 + Cr^2  \left(\fint_{Q(x_0, r)} | f|^p \right)^{1/p} &  & \text{ if } 0\le \delta \le 1 \text{ and } x_0 \in \omega,\\
&   C  \left(\fint_{Q(x_0, r) } |u|^2 \right)^{1/2} 
+ Cr^2 \left(\fint_{ Q(x_0, r)}
|\Lambda_{\delta^{-2}} f|^p \right)^{1/P} & & \text{ if }  0\le \delta \le 1 \text{ and } x_0 \in F,
\endaligned
\right.
\endaligned
\end{equation}
and
\begin{equation}\label{5.5-1}
\aligned
& |u(x_0)| + r|\nabla u (x_0)|\le \\
 & 
 \left\{
 \aligned
  & C \left(\fint_{Q(x_0, r) } | u|^2 \right)^{1/2}
 + C r^2
 \left(\fint_{Q(x_0, r)} | \Lambda_{\delta^{-2}}  f|^p \right)^{1/p} & & \text{ if } 1< \delta < \infty \text{ and } x_0 \in \omega,\\
&  \frac{ C}{\delta^2 }  \left(\fint_{Q(x_0, r) } |\Lambda_{\delta^2} u|^2 \right)^{1/2} 
+ C\delta^{-2} r^2
\left(\fint_{Q_r} |f|^p \right)^{1/p}  &  & \text{ if }  1< \delta < \infty \text{ and } x_0 \in F.
\endaligned
\right.
\endaligned
\end{equation}
If $\delta=\infty$, we have
\begin{equation}\label{5.5-2}
\aligned
|u(x_0)| + r|\nabla u (x_0)|
 & \le C \left(\fint_{Q(x_0, r) } | u|^2 \right)^{1/2} 
 + C r^2 \left(\fint_{Q(x_0, r)\cap \omega} |f|^p \right)^{1/p}& \quad & \text{ if } x_0 \in \omega
 \\
|u(x_0)| + r|\nabla u (x_0)|
& \le  C   \left(\fint_{Q(x_0, r)\cap F  } | u|^2 \right)^{1/2}  & \quad & \text{ if }   x_0 \in F.
\endaligned
\end{equation}
The constant $C$ depends only on $d$, $p$, $\kappa_1$, $\kappa_2$, $\omega$, and $(\sigma, M_0)$ in \eqref{smoothness}.
\end{thm}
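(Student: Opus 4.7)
The plan is to reduce the pointwise estimate at $x_0$ either to a classical interior Lipschitz estimate, when $x_0$ lies far from $\partial F$, or to Lemmas \ref{local-lemma-1}--\ref{local-lemma-2}, when $x_0$ lies near $\partial F$, and then to convert the weighted $L^2$ and $L^p$ terms produced by those lemmas into the norms $|\Lambda_{\delta^{\pm 2}} u|$ and $|\Lambda_{\delta^{\pm 2}} f|$ appearing in \eqref{5.5-0}--\eqref{5.5-1}. Set $d_0 = \text{dist}(x_0, \partial F)$. If $d_0 \ge r/8$, the cube $Q(x_0, r/8)$ lies in a single phase: on $\omega$ the system reads $-\text{\rm div}(A\nabla u) = f$ and on some $F_k$ it reads $-\text{\rm div}(A\nabla u) = \delta^{-2} f$, both uniformly elliptic with H\"older continuous coefficients. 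Classical interior $C^{1,\alpha}$ estimates give the required pointwise bound in terms of $(\fint |u|^2)^{1/2}$ and $(\fint |\widetilde{f}|^p)^{1/p}$, where $\widetilde{f}$ equals $f$ on $\omega$ and $\delta^{-2} f$ on $F$; since $\Lambda_{\delta^{-2}}$ is $1$ on $\omega$ and $\delta^{-2}$ on $F$, the factor $\delta^{-2}$ produced by the rescaling is precisely absorbed into $\Lambda_{\delta^{-2}} f$.

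In the complementary range $d_0 < r/8$, pick $y_0 \in \partial F_k$ with $|x_0 - y_0| < r/8$. Since each $F_k$ is $C^{1,\alpha}$ and, by periodicity of $\omega$, the $F_k$'s are translates of finitely many reference domains, after a rigid motion centered at $y_0$ the interface is a graph $\{x_d = \psi(x')\}$ of a $C^{1,\alpha}$ function with norm bounded uniformly in $k$. Applying Lemma \ref{local-lemma-1} (if $0 \le \delta \le 1$) or Lemma \ref{local-lemma-2} (if $1 < \delta \le \infty$) on $Q(y_0, r/2)$ bounds $\|\nabla u\|_{L^\infty(Q(y_0, r/4))}$ and therefore $|\nabla u(x_0)|$, as $|x_0-y_0|<r/8<r/4$ and $Q(y_0,r/2)\subset Q(x_0,r)$. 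The four weighted forms in \eqref{5.5-0}--\eqref{5.5-1} are then read off from the elementary identities
\[
\delta^2 \|u\|_{L^2(Q_r^+)} + \|u\|_{L^2(Q_r^-)} \simeq \|\Lambda_{\delta^2} u\|_{L^2(Q_r)}, \quad  \delta^{-2} \|f\|_{L^p(Q_r^+)} + \|f\|_{L^p(Q_r^-)} \simeq \|\Lambda_{\delta^{-2}} f\|_{L^p(Q_r)},
\]
valid because $\Lambda_{\delta^{\pm 2}}$ equals $\delta^{\pm 2}$ on $F$ and $1$ on $\omega$, together with their analogues obtained by swapping $\delta$ and $\delta^{-1}$; these directly convert the $\delta^{\pm 2}$-prefactors in the lemmas' conclusions into the intended weighted norms. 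The pointwise bound on $|u(x_0)|$ follows from the gradient bound via the Mean Value Theorem: pick $y \in Q(x_0, r/4)$ with $|u(y)|$ no larger than its $L^2$ average and integrate along $[y, x_0]$, exactly as in Remark \ref{remark-5.1}.

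The case $\delta = \infty$ with $x_0 \in F$ must be handled separately, since Lemma \ref{local-lemma-2} degenerates there: by the weak formulation, $u|_{F_k} = E_k + B_k x \in \mathcal{R}$ with $B_k^T = -B_k$, so $|u(x_0)|$ and $|\nabla u(x_0)| = |B_k|$ are controlled by the $L^2$ average of $u$ over $F_k \subset Q(x_0, r) \cap F$ by norm equivalence on the finite-dimensional space $\mathcal{R}$. The main obstacle is really bookkeeping rather than analysis: in each of the four combinations in $\{\delta \le 1, \delta > 1\} \times \{x_0 \in \omega, x_0 \in F\}$ one must verify that the $\delta^{\pm 2}$-prefactors produced by the asymmetry in Lemmas \ref{local-lemma-1}--\ref{local-lemma-2} are absorbed exactly into the corresponding factors of $\Lambda_{\delta^{\pm 2}}$ on the appropriate phase, so that all explicit $\delta$-dependence disappears from the constant $C$ and the asymmetric weighted forms in \eqref{5.5-0}--\eqref{5.5-1} emerge with their stated shape.
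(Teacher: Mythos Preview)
Your proposal is correct and follows essentially the same approach as the paper: split according to whether $x_0$ is far from or near $\partial F$, use classical interior estimates in the first case, and in the second case localize to a graph coordinate system and invoke Lemmas~\ref{local-lemma-1}--\ref{local-lemma-2} together with Remark~\ref{remark-5.1}. The paper's proof is terser (it simply says the estimates ``follow readily'' from those lemmas), whereas you spell out the conversion between the $\delta^{\pm 2}$-weighted phase norms and the $\Lambda_{\delta^{\pm 2}}$ weights; your separate treatment of $\delta=\infty$, $x_0\in F$ is fine but not strictly necessary, since Lemma~\ref{local-lemma-2} already covers that case.
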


\begin{proof}
Note that $-\text{\rm div}(A\nabla u)=f$ in $Q(x_0, r)\cap \omega$ and 
$-\text{\rm div}(A\nabla u)=\delta^{-2}f$ in $Q(x_0, r)\cap F$.
If $Q(x_0, cr)\subset \omega$ or $F$ for some small $c>0$, the estimates in \eqref{5.5-0}-\eqref{5.5-2} follow directly from the interior estimates 
for solutions of $-\text{\rm div}(A\nabla u)=f $. 
In the case $Q(x_0, cr)\cap \partial \omega \neq \emptyset$,
one may find $y_0\in \partial \omega$ such that $x_0\in Q(y_0, r/2)$ and $Q(y_0, r/2)\subset Q(x_0, r)$.
As a result,  the estimates in \eqref{5.5-0}-\eqref{5.5-2}  follow readily  from \eqref{local-2}, \eqref{local-2-r},
\eqref{local-2-0} and \eqref{local-3-0} by a simple localization argument.
\end{proof}

\begin{proof}[\bf Proof of Theorem \ref{main-thm-2}]

Note that for all cases in Theorem \ref{local-thm-0} with $f=0$,
\begin{equation}\label{5.6-0}
|u(x_0)| + r|\nabla u(x_0)| \le C \left(\fint_{Q(x_0, r)} |u|^2\right)^{12}.
\end{equation}
Since $u-E$ is also a solution for any $E\in \R^d$, one may use Poincar\'e's inequality to obtain 
\begin{equation}\label{5.6-1}
|\nabla u(x_0)| \le C \left(\fint_{Q(x_0, r)} |\nabla u|^2\right)^{12}
\end{equation}
for $0<r\le 1$.
Thus, if $u$ is a weak solution of $\mathcal{L}_\delta (u)=0$ in $Q (x_0, R)$ for some $R\ge 4$,
then
$$
|\nabla u(x_0)|
  \le C \left( 
\fint_{Q(x_0, 1) } |\nabla u|^2 \right)^{1/2}\\
 \le C 
 \left( 
\fint_{Q(x_0, R)\cap \omega } |\nabla u|^2 \right)^{1/2},
$$
where we have used Theorem \ref{main-thm-1} for the last inequality.
\end{proof}


\section{Estimates of fundamental solutions}\label{section-6}

Throughout this section we assume that  $d\ge 3$, $0<\delta<\infty$, and that $A$ satisfies \eqref{ellipticity}, \eqref{smoothness}, and is 1-periodic.
We also assume that each $F_k$ is a $C^{1, \sigma} $ domain for some $\sigma \in (0, 1)$.
Under these conditions, by combining the large-scale estimates in Section \ref{section-4}  with the  local  Lipschitz estimates in Section \ref{section-5}, 
we see that if  $u\in H^1(Q(x_0, R); \R^d)$ is a weak solution of
$\mathcal{L}_\delta (u)=0$ in $Q (x_0; R)$ for some $R>0$, then 
$$
|\nabla u(x_0)|\le C_\delta \left(\fint_{Q(x_0, R)} |\nabla u|^2 \right)^{1/2},
$$
where $C_\delta$ may depend on $\delta$. It follows that the operator $\mathcal{L}_\delta$ possesses 
a fundamental solution $\Gamma_\delta (x, y) = \big( \Gamma_\delta^{\alpha \beta} (x, y)  \big)_{d\times d}$ 
in the sense that if 
\begin{equation}\label{f-1}
u(x)=\int_{\R^d} \Gamma_\delta (x, y)  f (y)\, dy
\end{equation}
for some $f\in C_0^\infty (\R^d; \R^d)$,  then  $u\in L^{2^*} (\R^d; \R^d)$, $\nabla u\in L^2(\R^d; \R^{d\times d})$,  and 
\begin{equation}\label{f-2}
\int_{\R^d} \Lambda_{\delta^2} A\nabla u \cdot \nabla  v \, dx
=\int_{\R^d} f \cdot v \, dx
\end{equation}
for any $v  \in L^{2^*} (\R^d; \R^d)$ with $\nabla v \in L^2 (\R^d; \R^d)$, where $2^*=\frac{2d}{d-2}$.
Moreover, there exists a constant $C_\delta$ such that
$|\Gamma_\delta (x, y)|\le C_\delta |x-y|^{2-d}$ and
$|\nabla_x \Gamma_\delta (x, y)| + |\nabla_y \Gamma_\delta  (x, y)|\le C_\delta  |x-y|^{1-d}$
for any $x, y\in \R^d$.
We refer the reader to \cite{Hofmann-2007} for the construction of $\Gamma_\delta(x, y)$
under a H\"older continuity condition on weak solutions.
Our goal of  this section is to establish  the explicit dependence of $C_\delta$ on $\delta$.

\begin{lemma}\label{lemma-f-1}
Let $u$ be given by \eqref{f-1} with $f\in C_0^\infty(\omega; \R^d)$.
Then
\begin{equation}\label{f-1-0}
\delta \| Du  \|_{L^2( F)}  
+  \|\nabla u \|_{L^2(\omega)} +\| u\|_{L^{p^\prime} (\omega)} 
 \le C \| f\|_{L^p(\omega)},
\end{equation}
where $p=\frac{2d}{d+2}$ and $C$ depends only on $d$, $\kappa_1$, $\kappa_2$, and $\omega$.
 \end{lemma}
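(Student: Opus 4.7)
The plan is to test the equation against $v=u$ itself and then use the extension lemma (Lemma~\ref{lemma-ex}) together with Sobolev and Korn inequalities on $\R^d$ to close the estimate. The structure resembles the standard energy argument for fundamental solutions, but care is needed because the coercivity from ellipticity on $\R^d$ gives control only of $|Du|$ (weighted by $\Lambda_\delta$), not of $|\nabla u|$ directly.

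First I would take $v=u$ in \eqref{f-2}. Since $f$ is supported in $\omega$, the right-hand side is $\int_\omega f\cdot u\, dx$, and \eqref{e-2} gives
\begin{equation*}
\tfrac{\kappa_1}{4}\int_\omega |Du|^2\, dx +\tfrac{\kappa_1}{4}\delta^2\int_F |Du|^2\, dx
\le \int_\omega f\cdot u \, dx \le \|f\|_{L^p(\omega)} \|u\|_{L^{p'}(\omega)},
\end{equation*}
where $p'=2^*=\frac{2d}{d-2}$. The goal is then to bound $\|u\|_{L^{p'}(\omega)}$ by $\|Du\|_{L^2(\omega)}$ so that the right-hand side can be absorbed.

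To do that, I would construct a global extension $\widetilde{u}\in L^{2^*}(\R^d;\R^d)$ with $\nabla\widetilde{u}\in L^2(\R^d)$ that agrees with $u$ on $\omega$, by applying the extension operators $P_k$ of Lemma~\ref{lemma-ex} on each $\widetilde{F}_k$ and gluing. By \eqref{ex-3}--\eqref{ex-4} and the bounded-overlap of the shells $\widetilde{F}_k\setminus\overline{F}_k\subset\omega$,
\begin{equation*}
\|\nabla\widetilde{u}\|_{L^2(\R^d)}\le C\|\nabla u\|_{L^2(\omega)},\qquad \|D\widetilde{u}\|_{L^2(\R^d)}\le C\|Du\|_{L^2(\omega)}.
\end{equation*}
The Sobolev inequality on $\R^d$ yields $\|\widetilde{u}\|_{L^{2^*}(\R^d)}\le C\|\nabla\widetilde{u}\|_{L^2(\R^d)}$, and the classical identity
\begin{equation*}
\int_{\R^d}|\nabla\widetilde{u}|^2\, dx = 2\int_{\R^d}|D\widetilde{u}|^2\, dx - \int_{\R^d}(\text{\rm div}\,\widetilde{u})^2\, dx
\end{equation*}
(valid by approximation using $\widetilde{u}\in L^{2^*}$ and $\nabla\widetilde{u}\in L^2$) shows $\|\nabla\widetilde{u}\|_{L^2(\R^d)}\le \sqrt{2}\,\|D\widetilde{u}\|_{L^2(\R^d)}$. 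Chaining these gives
\begin{equation*}
\|u\|_{L^{p'}(\omega)}\le \|\widetilde{u}\|_{L^{2^*}(\R^d)}\le C\|Du\|_{L^2(\omega)}.
\end{equation*}

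Combining with the energy bound and using Young's inequality, I absorb $\|Du\|_{L^2(\omega)}$ from the right and obtain $\|Du\|_{L^2(\omega)}+\delta\|Du\|_{L^2(F)}\le C\|f\|_{L^p(\omega)}$, and then $\|u\|_{L^{p'}(\omega)}\le C\|f\|_{L^p(\omega)}$. Finally, $\|\nabla u\|_{L^2(\omega)}\le \|\nabla\widetilde{u}\|_{L^2(\R^d)}\le C\|Du\|_{L^2(\omega)}\le C\|f\|_{L^p(\omega)}$ closes the proof.

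The only delicate point is justifying the Korn identity on $\R^d$: one needs that $u$ decays enough for the integration by parts to be valid. This is handled by the a priori knowledge that $u\in L^{2^*}(\R^d)$ and $\nabla u\in L^2(\R^d)$, so $\widetilde{u}$ belongs to the same class and one can approximate by $C_0^\infty$ functions. This is the main technical checkpoint; the rest of the argument is straightforward testing and Sobolev.
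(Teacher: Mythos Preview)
Your proof is correct and follows essentially the same route as the paper: test \eqref{f-2} with $v=u$, build an extension of $u|_\omega$ via Lemma~\ref{lemma-ex}, and use Sobolev plus the first Korn inequality on $\R^d$ to show $\|u\|_{L^{p'}(\omega)}+\|\nabla u\|_{L^2(\omega)}\le C\|Du\|_{L^2(\omega)}$, then absorb. The only cosmetic difference lies in how the Korn inequality on $\R^d$ is justified: the paper invokes the pointwise decay $|u(x)|+|x||\nabla u(x)|=O(|x|^{2-d})$ (available from the fundamental-solution theory of \cite{Hofmann-2007}) to apply Korn and Sobolev on $Q_R$ and pass $R\to\infty$, whereas you use the explicit identity $\int_{\R^d}|\nabla\widetilde u|^2=2\int_{\R^d}|D\widetilde u|^2-\int_{\R^d}(\operatorname{div}\widetilde u)^2$ and density of $C_0^\infty$ in $\dot H^1\cap L^{2^*}$. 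Both arguments need the same input---that $\widetilde u$ lies in this class---and you correctly identify this as the checkpoint; note that $\widetilde u\in L^{2^*}(\R^d)$ follows because $\widetilde u-u$ is supported in $F$, lies in $H^1_0(F_k)$ on each inclusion, and $\ell^{2^*}\hookleftarrow\ell^2$ combined with the Sobolev inequality on each $F_k$ gives $\|\widetilde u-u\|_{L^{2^*}(F)}\le C\|\nabla(\widetilde u-u)\|_{L^2(F)}<\infty$.
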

 
 \begin{proof}
  By letting $v=u$ in \eqref{f-2}  we obtain 
 \begin{equation}\label{f-1-1}
 \delta^2 \int_{F} |D u |^2\, dx  
 +\int_\omega |Du |^2\, dx \le C \| f\|_{L^p(\omega)} \| u\|_{L^{p^\prime } (\omega)},
 \end{equation}
 where $p=\frac{2d}{d+2}$. 
 Next, let $U$   be an extension of $u$ from $\omega$ to $\R^d$ such that
 \begin{equation}\label{f-1-3}
 \| \nabla U\|_{L^2(\R^d)} \le C \| \nabla u\|_{L^2(\omega)} \quad \text{ and } \quad
  \|  D U \|_{L^2(\R^d)} \le C \|  D u  \|_{L^2(\omega)} .
  \end{equation}
 The function $U$ may be obtained by extending $u$ from $\widetilde{F}_k \setminus F_k$ to $F_k$, for each $k$,
 so that 
 \begin{equation}\label{f-1-3a}
 \| \nabla U\|_{L^2(\widetilde{F}_k) } \le C \| \nabla u\|_{L^2(\widetilde{F} _k \setminus \overline{F}_k)}
 \quad \text{ and } \quad
 \|  D U \|_{L^2(\widetilde{F}_k) } \le C \|  D u \|_{L^2(\widetilde{F} _k \setminus \overline{F}_k)}.
 \end{equation}
 See Lemma \ref{lemma-ex}.
  Since $|u(x)| +|x| |\nabla u (x)| =O(|x|^{2-d})$ as $|x|\to \infty$ and $d\ge 3$, we see that
 \begin{equation}\label{decay}
 \frac{1}{R^2} \int_{Q_{2R} \setminus Q_R}  \big ( |u|^2 +|\nabla u|^2 \big) \, dx \to 0 \quad \text{ as } R \to \infty.
 \end{equation}
  The property (\ref{ex-2}) also implies  that $U$ satisfies the condition (\ref{decay}).
  This allows us to  apply the first  Korn inequality and Sobolev inequality in $Q_R$  and then let $R\to \infty$  to deduce that
 \begin{equation}\label{f-1-1a}
 \| \nabla U\|_{L^2(\R^d)} \le C \| D U \|_{L^2(\R^d)}
 \quad
 \text{ and }
 \quad
 \| U\|_{L^{p^\prime}(\R^d)} \le C \| \nabla U  \|_{L^2(\R^d)}.
\end{equation}
 As a result, we obtain 
 \begin{equation}\label{f-1-4}
 \| U \|_{L^{p^\prime}(\R^d)}
 \le C \| \nabla U \|_{L^2(\R^d)}
 \le C \| D U \|_{L^2(\R^d)}
 \le C \| D u  \|_{L^2(\omega)}.
 \end{equation}
 It follows that
 \begin{equation}\label{f-1-4a}
 \aligned
  & \| \nabla u\|_{L^2(\omega)} =\| \nabla U \|_{L^2(\omega)} \le C \| D u \|_{L^2(\omega)},\\
  & \| u\|_{L^{p^\prime}(\omega)}= \| U  \|_{L^{p^\prime}(\omega)}
  \le C \| D u  \|_{L^2(\omega)}.
  \endaligned
 \end{equation}
 Consequently, by \eqref{f-1-1} and the Cauchy inequality,  
 we see that $ \delta \| D u \|_{L^2(F)} \le C \| f\|_{L^p(\omega)}$ and
 $$
 \|\nabla u\|_{L^2(\omega)}
 + \| u\|_{L^{p^\prime} (\omega)} 
 \le C \| Du \|_{L^2(\omega)} 
 \le C \| f\|_{L^p(\omega)},  
  $$
  which completes the proof.
 \end{proof}
 
 \begin{remark}\label{re-div}
 Let $u$ be given by (\ref{f-1}) with $f=\text{\rm div} (g)$,
 where $g\in C_0^\infty (\omega; \R^{d\times d})$.
 Then
 $$
 \delta^2 \int_{F} | Du |^2\, dx 
 + \int_{\omega} |Du |^2\, dx \le C \| g \|_{L^2(\omega)} \| \nabla u\|_{L^2(\omega)}.
 $$
 Using (\ref{f-1-4a}), we obtain 
 \begin{equation}\label{re-div-0}
 \delta
 \| Du \|_{L^2(F)} + \| \nabla u\|_{L^2(\omega)}
 + \| u\|_{L^{p^\prime} (\omega)}
 \le C \| g\|_{L^2(\omega)},
 \end{equation}
 where $p^\prime=\frac{2d}{d-2}$ and $C$ depends only on $d$, $\kappa_1$, $\kappa_2$, and $\omega$.
 \end{remark}
 
 \begin{remark}\label{re-large}
 
 Suppose $1\le \delta< \infty$.
 Let $u$ be given by \eqref{f-1} with $f\in C_0^\infty (\R^d; \R^d)$.
 By letting $v=u$ in \eqref{f-2} we obtain 
 $\| D u \|^2_{L^2(\R^d)} \le C \| f\|_{L^p(\R^d)} \| u\|_{L^{p^\prime}(\R^d)}$.
 Using 
 $$\| u\|_{L^{p^\prime}(\R^d)} \le C \| \nabla u\|_{L^2(\R^d)} \le C \| D u \|_{L^2 (\R^d)},
 $$
 we see that  $\|u\|_{L^{p^\prime}(\R^d)} \le C \| f\|_{L^p(\R^d)}$.
 
 \end{remark}
 
 \begin{thm}\label{thm-f-1}
 Let $0<\delta<\infty$.
 For $x, y\in \R^d$ with $|x-y|_\infty\ge 4$, we have
 \begin{align}
 |\Gamma_\delta (x, y)|  & \le C |x-y|^{2-d}  , \label{f-2-0}\\
 |\nabla_x \Gamma_\delta (x, y)|
 +|\nabla _y \Gamma_\delta  (x, y)|
 & \le  C |x-y|^{1-d} , \label{f-2-1} \\
 |\nabla_x\nabla_y \Gamma_\delta (x, y)| & \le C |x-y|^{-d}, \label{f-2-00}
 \end{align}
 where $C$ depends only on $d$, $\kappa_1$, $\kappa_2$, $\omega$, and $(\sigma, M_0)$.
 \end{thm}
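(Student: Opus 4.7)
The approach follows the classical Gr\"uter--Widman strategy for elliptic fundamental solutions, adapted here to extract $\delta$-uniformity from the large-scale regularity of Section \ref{section-4}. The key observation is that $u(x) = \Gamma_\delta(x, y) e$, for fixed $y \in \R^d$ and unit vector $e \in \R^d$, solves $\mathcal{L}_\delta u = 0$ on $\R^d \setminus \{y\}$; so on a cube $Q(x_0, R)$ of half-side $R \sim |x_0 - y|_\infty$ that avoids $y$, Theorem \ref{main-thm-2}, the Caccioppoli inequalities of Sections \ref{section-2}--\ref{section-3}, and the local pointwise bound \eqref{5.6-0} are all available. The global $\delta$-uniform energy control is supplied by Lemma \ref{lemma-f-1}, applied to a regularization of $u$.

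I would first regularize. For small $\rho > 0$, pick $f^\rho_{y, e} \in L^\infty(\R^d; \R^d)$ with $\operatorname{supp} f^\rho_{y, e} \subset Q(y, \rho)$ and $\int f^\rho_{y, e} = e$, and set $u_\rho(x) = \int \Gamma_\delta(x, z) f^\rho_{y, e}(z)\, dz$, so $\mathcal{L}_\delta u_\rho = f^\rho_{y, e}$ and $u_\rho(x) \to \Gamma_\delta(x, y) e$ off $y$ as $\rho \to 0$. Fix $x_0$ with $|x_0 - y|_\infty = r \ge 32$ (the regime $4 \le r < 32$ is covered directly by Theorem \ref{local-thm-0} applied in a cube around $x_0$ disjoint from $y$), set $R = r/8 \ge 4$, and take $\rho < 1$, so that $\mathcal{L}_\delta u_\rho = 0$ on $Q(x_0, 4R)$.

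The plan is then to establish the annular decay estimate
\[
\int_{Q(x_0, 2R)} |u_\rho|^2 + R^2 \int_{Q(x_0, 2R)} |\nabla u_\rho|^2 \le C R^{4-d},
\]
uniformly in $\rho$ and $\delta$. The iteration proceeds over dyadic shells around $y$, using Caccioppoli from Theorem \ref{thm-2.3} when $\delta \le 1$ and Theorems \ref{thm-3.3}--\ref{thm-4.1} together with the self-improvement of Remark \ref{remark-3.1} when $\delta > 1$, the Sobolev embedding implicit in \eqref{f-1-4a}, and the uniform energy control of Lemma \ref{lemma-f-1}. Combining this with Theorem \ref{main-thm-2} applied to $u_\rho - E$ yields $|\nabla u_\rho(x_0)| \le C R^{1-d}$, and combining with the local pointwise bound \eqref{5.6-0} yields $|u_\rho(x_0)| \le C R^{2-d}$. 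Passing $\rho \to 0$ delivers \eqref{f-2-0} and the $\nabla_x$-half of \eqref{f-2-1}.

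The $\nabla_y$ estimate in \eqref{f-2-1} follows from the $\nabla_x$ one by the self-adjoint symmetry $\Gamma_\delta(x, y) = \Gamma_\delta(y, x)^T$, which is a consequence of the elasticity condition $a_{ij}^{\alpha\beta} = a_{ji}^{\beta\alpha}$. For the mixed derivative \eqref{f-2-00}, one applies the $\nabla_x$-gradient bound to $x \mapsto \nabla_y \Gamma_\delta(x, y)$, which still solves $\mathcal{L}_\delta = 0$ in $x$ away from $y$. The principal obstacle is achieving the correct $R$-power in the annular decay uniformly in $\delta$: when $\delta > 1$, the Caccioppoli inequality \eqref{3.3-0} carries a residual $R^{-2\ell}\int |\nabla u|^2$ term that must be propagated carefully through every dyadic shell using Remark \ref{remark-3.1}; and when $y \in F$, Lemma \ref{lemma-f-1} does not directly apply to the source $f^\rho_{y, e}$, so one must either shift the mollification into $\omega$ and control the resulting dipole correction via Lemma \ref{lemma-ex}, or run a parallel energy argument on $F_k$ using $-\text{\rm div}(A\nabla u_\rho) = \delta^{-2} f^\rho_{y, e}$ together with the extension bound \eqref{2.2-1}.
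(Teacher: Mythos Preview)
Your approach differs substantially from the paper's, and the central step---the uniform-in-$\rho$ annular decay
\[
\int_{Q(x_0, 2R)} |u_\rho|^2 + R^2 \int_{Q(x_0, 2R)} |\nabla u_\rho|^2 \le C R^{4-d}
\]
---is not actually delivered by the tools you cite. Applying Lemma \ref{lemma-f-1} to a mollified point mass $f^\rho_{y,e}$ with $\int f^\rho_{y,e}=e$ and support in $Q(y,\rho)$ gives $\|\nabla u_\rho\|_{L^2(\omega)}+\|u_\rho\|_{L^{p'}(\omega)}\le C\|f^\rho_{y,e}\|_{L^p}\sim C\rho^{(2-d)/2}$, which blows up as $\rho\to 0$. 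The Caccioppoli inequalities of Sections \ref{section-2}--\ref{section-3} are interior estimates on cubes; iterating them over dyadic shells around $y$ does not by itself convert a global energy bound of order $\rho^{2-d}$ into the claimed decay $R^{4-d}$ at distance $R$, and you have not indicated what replaces the weak-$L^{d/(d-2)}$ bound that the classical Hofmann--Kim argument uses here (that bound, as the paper notes before the theorem, is only known with a $\delta$-dependent constant at this stage). The same gap undermines your treatment of the range $4\le r<32$: Theorem \ref{local-thm-0} bounds $|u_\rho(x_0)|$ by an $L^2$-average of $u_\rho$ on a nearby cube, but that average still needs $\rho$-uniform control. Your acknowledged difficulty when $y\in F$ compounds this.

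The paper sidesteps all of this by a duality argument. Rather than regularizing the singularity, one takes a \emph{generic} $f\in C_0^\infty(\omega\cap Q(y_0,r))$ (and separately $f=\operatorname{div}g$ with $g\in C_0^\infty(\omega\cap Q(y_0,r))$), applies the large-scale bound \eqref{L-inf} together with \eqref{ex-1a}--\eqref{ex-3a} and Lemma \ref{lemma-f-1} (respectively Remark \ref{re-div}) to obtain $|u(x_0)|\le Cr^{1-d/2}\|f\|_{L^p(\omega)}$ and $|\nabla u(x_0)|\le Cr^{-d/2}\|g\|_{L^2(\omega)}$, and then dualizes to get $L^{p'}$ and $L^2$ bounds on $\Gamma_\delta(x_0,\cdot)$ and $\nabla_y\Gamma_\delta(x_0,\cdot)$ over $\omega\cap Q(y_0,r)$. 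Because $v(y)=\Gamma_\delta(x_0,y)$ solves $\mathcal{L}_\delta^*(v)=0$ in $Q(y_0,r)$, one then applies the same pointwise estimates to $v$, obtaining the bounds at $y_0$ regardless of whether $y_0\in\omega$ or $y_0\in F$. This two-pass duality completely avoids the $\rho\to 0$ limit, requires no annular iteration, and handles the location of $y$ automatically since the test data are always placed in $\omega$.
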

 
\begin{proof}
Fix $x_0, y_0\in \R^d$ with $r=|x_0-y_0|_\infty\ge 4$.
Let $u$ be given by \eqref{f-1} with $f\in C_0^\infty (\omega\cap Q(y_0, r); \R^d) $.
Since $\mathcal{L}_\delta (u) =0$ in $Q (x_0, r)$,
it follows from (\ref{5.6-0}) and \eqref{L-inf} that
\begin{equation}\label{f-2-2}
\aligned
|u(x_0)|
& \le C \left(\fint_{Q(x_0, 1/2)} |u|^2\right)^{1/2}\\
& \le C \left(\fint_{Q(x_0, r/4)}
|u|^2 \right)^{1/2}
+ C r \left(\fint_{Q(x_0, r/4)} |\nabla u|^2 \right)^{1/2}\\
&  \le C \left(\fint_{Q(x_0, 3+r/4)\cap \omega}
|u|^2 \right)^{1/2}
+ C r \left(\fint_{Q(x_0, 3+ r/4) \cap \omega} |\nabla u|^2 \right)^{1/2},
\endaligned
\end{equation}
where we have used \eqref{ex-1a}  and \eqref{ex-3a} for the last inequality.
We now use (\ref{f-1-0}) to bound the right-hand side of \eqref{f-2-2}.
This gives
$$
\aligned
| u(x_0)|
&\le C r^{1-\frac{d}{2}} \big\{ \| u\|_{L^{p^\prime}(\omega)}
+ \| \nabla u\|_{L^2(\omega)} \big\}\\
&\le C r^{1-\frac{d}{2}} \| f\|_{L^p(\omega)},
\endaligned
$$
where $p=\frac{2d}{d+2}$.
By duality it follows that 
\begin{equation}\label{f-2-3}
\left(\int_{\omega\cap Q(y_0, r)}
| \Gamma_\delta (x_0, y)|^{p^\prime} \, dy \right)^{1/p^\prime}
\le C r^{1-\frac{d}{2}}.
\end{equation}
Note that if $f=\text{\rm div} (g)$, where $g\in C_0^\infty(\omega\cap Q(y_0, r); \R^{d\times d} )$,
we may use  \eqref{f-2-2} and \eqref{re-div-0} to obtain 
$$
|u(x_0)|\le C r^{1-\frac{d}{2}} \| g\|_{L^2(\omega)}.
$$
By duality we deduce that
\begin{equation}\label{f-2-4}
\left(\int_{\omega\cap Q(y_0, r)}
| \nabla_y \Gamma_\delta (x_0, y)|^{2} \, dy \right)^{1/2}
\le C r^{1-\frac{d}{2}}.
\end{equation}
Also, note that by Theorem \ref{main-thm-2},
$$
|\nabla u(x_0)|\le C \left(\fint_{\omega\cap Q(x_0, r)} |\nabla u|^2 \right)^{1/2}
\le Cr^{-\frac{d}{2}} \| g \|_{L^2(\omega)}.
$$
Again, by duality, we obtain 
\begin{equation}\label{dd}
\left(\int_{\omega\cap Q(y_0, r)}
| \nabla_x \nabla_y \Gamma_\delta (x_0, y)|^{2} \, dy \right)^{1/2}
\le C r^{-\frac{d}{2}}.
\end{equation}

Now, let $ v(y)= \Gamma_\delta (x_0, y)$.
Then  $\mathcal{L}_\delta^* (v)=0$ in $Q(y_0, r)$, where $\mathcal{L}_\delta^*$ denotes the adjoint of
$\mathcal{L}_\delta$.
Since $\mathcal{L}_\delta^*$ satisfies the same conditions as $\mathcal{L}_\delta$,
we may use \eqref{f-2-2} to obtain
$$
\aligned
|v(y_0)|
& 
\le C \left(\fint_{\omega\cap Q(y_0, 3+r/4)}
|v|^2 \right)^{1/2}
+ C r \left(\fint_{\omega\cap Q(y_0, 3+ r/4) } |\nabla v|^2 \right)^{1/2}\\
& \le C r^{2-d},
\endaligned
$$
which gives \eqref{f-2-0}. Also, note that by Theorem \ref{main-thm-2},
\begin{equation}\label{f-2-5}
|\nabla v (y_0)|
   \le C \left(\fint_{\omega\cap Q(y_0, r) } |\nabla v|^2 \right)^{1/2}.
\end{equation}
This, together with \eqref{f-2-4}, gives 
$|\nabla_y \Gamma_\delta (x_0, y_0)|\le C r^{1-d}$.
The  estimate $|\nabla_x \Gamma_\delta (x_0, y_0)|\le C r^{1-d}$ 
follows from the fact that the fundamental solution $\Gamma^*_\delta (x, y)$  for $\mathcal{L}_\delta^*$ is given by the
transpose of $\Gamma_\delta (y, x)$.
Finally, the estimate for $\nabla_x \nabla_y \Gamma_\delta (x_0, y_0)$
follows from \eqref{dd} and the fact that
$\mathcal{L}_\delta^* (\nabla_x \Gamma_\delta (x_0, \cdot))=0$ in $\R^d\setminus \{ x_0 \}$.
\end{proof}

Next, we treat the case where $ 1\le \delta< \infty$ and $|x-y|_\infty < 4$.

 \begin{thm}\label{thm-f-1a}
 Suppose $ 1\le \delta<\infty$. Then estimates \eqref{f-2-0}, \eqref{f-2-1} and \eqref{f-2-00}
 continue to hold
 for $x, y\in \R^d$ with $|x-y|_\infty< 4$.
   \end{thm}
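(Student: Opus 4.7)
The plan is to follow the scheme of Theorem \ref{thm-f-1} with two substitutions: (i) in place of the large-scale Lipschitz estimate of Theorem \ref{main-thm-2} (which is unavailable at scale $r<4$) I will apply the local Lipschitz estimate of Theorem \ref{local-thm-0}, valid at any scale $\le 2$; and (ii) in place of Lemma \ref{lemma-f-1} I will use the global Sobolev-type bound $\|u\|_{L^{p'}(\R^d)}\le C\|f\|_{L^p(\R^d)}$ of Remark \ref{re-large}, which holds precisely in the regime $\delta\ge 1$. Throughout, $p=\frac{2d}{d+2}$ and $p'=\frac{2d}{d-2}$.

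Fix $x_0,y_0\in\R^d$ with $r:=|x_0-y_0|_\infty\in(0,4)$. Given $f\in C_0^\infty(Q(y_0,r/16);\R^d)$, set $u(x)=\int\Gamma_\delta(x,y)f(y)\,dy$. Since $Q(x_0,r/4)$ and $Q(y_0,r/16)$ are disjoint, $\mathcal{L}_\delta u=0$ in $Q(x_0,r/4)$. Applying Theorem \ref{local-thm-0} at scale $r/4\le 1$ gives
\[
|u(x_0)|+r|\nabla u(x_0)|\le C\Big(\fint_{Q(x_0,r/4)}|u|^2\Big)^{1/2},
\]
uniformly in $\delta\ge 1$: in the case $x_0\in F$ the right-hand side of Theorem \ref{local-thm-0} is $\delta^{-2}(\fint|\Lambda_{\delta^2}u|^2)^{1/2}$, and this is dominated by $(\fint|u|^2)^{1/2}$ because $\Lambda_{\delta^2}\le\delta^2$ pointwise when $\delta\ge 1$. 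H\"older's inequality then gives $(\fint_{Q(x_0,r/4)}|u|^2)^{1/2}\le Cr^{(2-d)/2}\|u\|_{L^{p'}(Q(x_0,r/4))}$, which by Remark \ref{re-large} is bounded by $Cr^{(2-d)/2}\|f\|_{L^p}$. Dualising yields the integral bounds
\[
\|\Gamma_\delta(x_0,\cdot)\|_{L^{p'}(Q(y_0,r/16))}\le Cr^{(2-d)/2},\qquad \|\nabla_x\Gamma_\delta(x_0,\cdot)\|_{L^{p'}(Q(y_0,r/16))}\le Cr^{-d/2}.
\]

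To upgrade these to pointwise bounds at $y_0$, set $v(y)=\Gamma_\delta(x_0,y)$ and $w(y)=\nabla_x\Gamma_\delta(x_0,y)$; since $x_0\notin Q(y_0,r/16)$, both satisfy $\mathcal{L}_\delta^{*}(\cdot)=0$ on that cube. A second application of Theorem \ref{local-thm-0} in the $y$-variable at scale $r/32$, followed by the same H\"older step, will give
\[
|v(y_0)|+r|\nabla v(y_0)|\le Cr^{2-d},\qquad |w(y_0)|+r|\nabla w(y_0)|\le Cr^{1-d},
\]
which are precisely \eqref{f-2-0}, \eqref{f-2-1}, and \eqref{f-2-00}. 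The main obstacle I anticipate is the case analysis $\omega$ versus $F$ at each of $x_0$ and $y_0$ when invoking Theorem \ref{local-thm-0}; in every case the $\Lambda_{\delta^2}$ bookkeeping collapses to a $\delta$-uniform constant thanks to $\delta\ge 1$, so no new ingredients will be required beyond Theorem \ref{local-thm-0} and Remark \ref{re-large}.
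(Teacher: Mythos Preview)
Your proposal is correct and follows essentially the same route as the paper: both use the local Lipschitz estimate of Theorem \ref{local-thm-0} (specifically \eqref{5.5-1}) together with the global bound $\|u\|_{L^{p'}(\R^d)}\le C\|f\|_{L^p(\R^d)}$ from Remark \ref{re-large}, then dualize and reapply the local estimate in the $y$-variable. Your use of smaller cubes ($r/4$, $r/16$) and your explicit handling of the $\omega$/$F$ case split are in fact tidier than the paper's own proof, which applies \eqref{5.5-1} on $Q(x_0,r)$ without remarking that Theorem \ref{local-thm-0} is stated only for $r\le 2$.
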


\begin{proof}
The proof is similar to that of Theorem \ref{thm-f-1}.
Fix $x_0, y_0\in\R^d$ with $r=|x_0-y_0|_\infty \le 4$.
Let $u$ be given by \eqref{f-1} with $f\in C_0^\infty(Q(y_0, r); \R^d)$.
Since $\mathcal{L}_\delta (u)=0$ in $Q(x_0, r)$,
in view of  \eqref{5.5-1} and Remark \ref{re-large}, we obtain 
\begin{equation}\label{f-2-2a1}
\aligned
|u(x_0)| + r |\nabla u (x_0)|
&\le C \left( \fint_{Q(x_0, r)}
|u|^2\right)^{1/2}\\
& \le C r^{1-\frac{d}{2}} \| f\|_{L^p(\R^d)}.
\endaligned
\end{equation}
By duality it follows that
\begin{equation}\label{f-2-2a2}
\left(\int_{Q(y_0, r)}
|\Gamma_\delta (x_0, y)|^{p^\prime}\, dy \right)^{1/p^\prime}
\le C r^{1-\frac{d}{2}}.
\end{equation}
Since $\mathcal{L}_\delta^* (\Gamma_\delta (x_0, \cdot))
=0$ in $Q(y_0, r)$,  the desired estimates
follow readily from the first inequality in (\ref{f-2-2a1}). 
We omit the details.
\end{proof}

It remains to handle the case  where $0<\delta<1$ and $|x-y|_\infty<4$.

 \begin{lemma}\label{lemma-f-2}
 Let  $0< \delta\le 1$ and 
 \begin{equation}\label{f-20-0}
 u(x)=\int_{\R^d} \Gamma_\delta (x, y) \Lambda_\delta (y) f(y)\, dy
 \end{equation}
 for some $f\in C_0^\infty(\R^d; \R^d)$.
 Then
 \begin{equation}\label{f-20-1}
 \|\Lambda_\delta u\|_{L^{p^\prime} (\R^d)}
 + \|  \Lambda_\delta \nabla u \|_{L^2(\R^d)}
 \le C \| f\|_{L^{p} (\R^d)},
 \end{equation}
 where $p =\frac{2d}{d+2}$ and $C$ depends only on $d$, $\kappa_1$, $\kappa_2$, and $\omega$.
 \end{lemma}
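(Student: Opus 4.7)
By the definition of $\Gamma_\delta$, the function $u$ satisfies the variational identity
\[
\int_{\R^d} \Lambda_{\delta^2} A \nabla u \cdot \nabla v\, dx = \int_{\R^d} \Lambda_\delta f \cdot v\, dx
\]
for suitable test functions $v$. Moreover, since $f$ has compact support and the fundamental solution satisfies $|\Gamma_\delta(x,y)| + |x-y|\,|\nabla_x \Gamma_\delta(x,y)| = O_\delta(|x-y|^{2-d})$ (recalled at the beginning of Section \ref{section-6}), $u$ inherits the decay $|u(x)| + |x|\,|\nabla u(x)| = O(|x|^{2-d})$ as $|x|\to\infty$. The strategy is to test with $v = u$, exploit the pointwise identity $\Lambda_\delta^2 = \Lambda_{\delta^2}$, and close the loop with a weighted Korn--Sobolev inequality.

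First I would take $v = u$, apply the ellipticity bound \eqref{e-2}, and note that
\[
\int_{\R^d} \Lambda_\delta f \cdot u\, dx = \int_{\R^d} f \cdot (\Lambda_\delta u)\, dx \le \|f\|_{L^p(\R^d)} \|\Lambda_\delta u\|_{L^{p'}(\R^d)}
\]
by H\"older with $p = 2d/(d+2)$, producing
$\|\Lambda_\delta Du\|_{L^2(\R^d)}^2 \le C\,\|f\|_{L^p(\R^d)}\,\|\Lambda_\delta u\|_{L^{p'}(\R^d)}$. The crux is then the weighted Korn--Sobolev inequality
\begin{equation*}
\|\Lambda_\delta u\|_{L^{p'}(\R^d)} + \|\Lambda_\delta \nabla u\|_{L^2(\R^d)} \le C\,\|\Lambda_\delta Du\|_{L^2(\R^d)}. \tag{$\star$}
\end{equation*}
Granting $(\star)$, combining it with the energy bound absorbs $\|\Lambda_\delta u\|_{L^{p'}}$ and yields $\|\Lambda_\delta Du\|_{L^2} \le C\|f\|_{L^p}$, and a second application of $(\star)$ finishes the proof.

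To prove $(\star)$, I would construct a global extension $U$ of $u|_\omega$ to $\R^d$ by applying the operators $P_k$ of Lemma \ref{lemma-ex} in each $\widetilde{F}_k$, obtaining $\|\nabla U\|_{L^2(\R^d)} \le C \|\nabla u\|_{L^2(\omega)}$ and $\|DU\|_{L^2(\R^d)} \le C\|Du\|_{L^2(\omega)}$ with $U$ sharing the decay of $u$. The classical Korn inequality and the Sobolev embedding applied to $U$ (justified by integrating on $Q_R$ and letting $R\to\infty$, exactly as in the proof of Lemma \ref{lemma-f-1}) yield
\[
\|u\|_{L^{p'}(\omega)} + \|\nabla u\|_{L^2(\omega)} \le C \|Du\|_{L^2(\omega)} \le C\|\Lambda_\delta Du\|_{L^2(\R^d)}.
\]
On each inclusion $F_k$ I would split $u = (u - U) + U$. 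Since $u - U$ extends by zero to an element of $H^1_0(F_k)$, the Korn inequality on $H^1_0$ and Sobolev give
\[
\|u - U\|_{L^{p'}(F_k)} + \|\nabla(u-U)\|_{L^2(F_k)} \le C\,\|D(u-U)\|_{L^2(F_k)} \le C\bigl(\|Du\|_{L^2(F_k)} + \|Du\|_{L^2(\widetilde{F}_k\setminus \overline{F}_k)}\bigr),
\]
while $\|U\|_{L^{p'}(F_k)} + \|\nabla U\|_{L^2(F_k)} \le C\|Du\|_{L^2(\omega)}$ by the global estimate on $U$ above. Multiplying by $\delta \le 1$ and summing over $k$, using the disjointness of $\widetilde{F}_k\setminus \overline{F}_k \subset \omega$, controls $\delta\|u\|_{L^{p'}(F)} + \delta\|\nabla u\|_{L^2(F)}$ by $\|\Lambda_\delta Du\|_{L^2(\R^d)}$, completing $(\star)$. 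The main obstacle is this bookkeeping in $F_k$: the $H^1_0$-Korn step reintroduces $\|Du\|_{L^2(F_k)}$, which is absorbed only after multiplication by $\delta$ into the $F$-piece of $\|\Lambda_\delta Du\|_{L^2}$, while the extension contributes $\|Du\|_{L^2(\widetilde{F}_k\setminus \overline{F}_k)}$ in the $\omega$-piece; both fit precisely because the hypothesis $\delta \le 1$ lets the $\delta$ prefactor simply be dropped where it harms the estimate.
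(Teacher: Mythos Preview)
Your proposal is correct and follows the same overall architecture as the paper: test the variational identity with $v=u$ to get $\|\Lambda_\delta Du\|_{L^2}^2 \le C\|f\|_{L^p}\|\Lambda_\delta u\|_{L^{p'}}$, then close via the weighted Korn--Sobolev inequality $(\star)$.

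The only noteworthy difference is in how you establish $(\star)$ on the inclusions. You decompose $u = (u-U) + U$ on each $F_k$, invoke Korn on $H^1_0(F_k)$ for the first piece, and sum. The paper takes a shorter route: since $u$ itself lies in $H^1(\R^d)$ with the required decay, the \emph{unweighted} global Korn and Sobolev inequalities apply directly to $u$, yielding $\|u\|_{L^{p'}(\R^d)} + \|\nabla u\|_{L^2(\R^d)} \le C\|Du\|_{L^2(\R^d)}$. Multiplying this by $\delta$ and using $\delta\le 1$ gives $\delta\|Du\|_{L^2(\R^d)} \le \|\Lambda_\delta Du\|_{L^2(\R^d)}$, which handles the $F$-contribution in one line. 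Your per-inclusion splitting recovers the same bound but with more bookkeeping (and you should be slightly careful: the per-$k$ bound $\|U\|_{L^{p'}(F_k)} \le C\|Du\|_{L^2(\omega)}$ cannot be summed as written; one should sum first and then apply the global bound $\|U\|_{L^{p'}(F)} \le \|U\|_{L^{p'}(\R^d)} \le C\|Du\|_{L^2(\omega)}$). Both arguments are valid; the paper's is simply cleaner because it avoids working in $F_k$ altogether.
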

 
\begin{proof}
As in the proof of Lemma \ref{lemma-f-1}, we have
$$
\| u\|_{L^{p^\prime} (\R^d)} \le C \| \nabla u\|_{L^2(\R^d)} 
\le C \| D u \|_{L^2(\R^d)}
$$
 and 
$\| u\|_{L^{p^\prime} (\omega)} +\| \nabla u\|_{L^2(\omega)} \le C \|  Du \|_{L^2(\omega)}.
$
It  follows that 
\begin{equation}\label{f-2-3x}
\aligned
\| \Lambda_\delta u\|_{L^{p^\prime} (\R^d)} 
+ \|\Lambda_\delta \nabla u\|_{L^2(\R^d)} 
 & \le C\delta  \|  Du \|_{L^2(\R^d)} 
+ C \| Du  \|_{L^2(\omega)}\\
& \le C \|\Lambda_\delta D u  \|_{L^2(\R^d)},
\endaligned
\end{equation}
where we have used  the assumption $\delta\le 1$ for the last inequality.
By letting $v=u$ in \eqref{f-2}, we obtain 
$$
\|\Lambda_\delta Du \|^2_{L^2(\R^d)} \le C \| \Lambda_\delta u \|_{L^{p^\prime} (\R^d)} \| f\|_{L^p(\R^d)},
$$
which, together with \eqref{f-2-3x}, yields  \eqref{f-20-1}.
\end{proof}

\begin{thm}\label{thm-f-3}
Suppose $0<\delta< 1$.
For $x, y\in \R^d$ with $|x-y|_\infty<  4 $, we have
 \begin{align}
 \Lambda_\delta (x) \Lambda_\delta (y) |\Gamma_\delta (x, y)|  & \le
 C |x-y|^{2-d}   , \label{f-3-0}\\
 \Lambda_\delta (x) \Lambda_\delta (y) 
 \big\{ |\nabla_x \Gamma_\delta (x, y)|
 +|\nabla _y \Gamma_\delta  (x, y)| \big\} 
 & \le  C |x-y|^{1-d} , \label{f-3-1}\\
 \Lambda_\delta (x) \Lambda_\delta (y) |\nabla_x \nabla_y \Gamma_\delta (x, y)| 
 & \le C |x-y|^{-d}, \label{f-3-1x} 
 \end{align}
 where $C$ depends only on $d$, $\kappa_1$, $\kappa_2$,   $\omega$, and $(\sigma, M_0)$.
 \end{thm}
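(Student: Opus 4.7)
The strategy is to mimic the proofs of Theorems \ref{thm-f-1} and \ref{thm-f-1a}, substituting the weighted $L^{p'}$-bound of Lemma \ref{lemma-f-2} for the unweighted one of Lemma \ref{lemma-f-1}, so that the factor $\Lambda_\delta$ is carried symmetrically through the duality step at $x_0$ and the adjoint local-estimate step at $y_0$. Fix $x_0, y_0\in \R^d$ with $r=|x_0-y_0|_\infty\in (0,4)$, and for $f\in C_0^\infty(Q(y_0,r); \R^d)$ put
$$u(x)=\int_{\R^d} \Gamma_\delta(x,y)\Lambda_\delta(y) f(y)\, dy,$$
so that $\mathcal{L}_\delta u=\Lambda_\delta f$. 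Since $Q(x_0,r/2)\cap Q(y_0,r)=\emptyset$ we have $\mathcal{L}_\delta u=0$ in $Q(x_0,r/2)$, while Lemma \ref{lemma-f-2} gives $\|\Lambda_\delta u\|_{L^{p'}(\R^d)}\le C\|f\|_{L^p}$ with $p=2d/(d+2)$.

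The key observation is that for $0<\delta\le 1$ one has pointwise $\Lambda_{\delta^2}=\Lambda_\delta^2\le \Lambda_\delta$ (since $\Lambda_\delta\le 1$) and $|u|\le \delta^{-1}|\Lambda_\delta u|$ (since $\Lambda_\delta\ge \delta$). Feeding these into \eqref{5.5-0} applied to $u$ on $Q(x_0,r/2)$ with zero source, the two cases $x_0\in\omega$ and $x_0\in F$ collapse into the single inequality
$$\Lambda_\delta(x_0)\bigl(|u(x_0)|+r|\nabla u(x_0)|\bigr)\le C\Bigl(\fint_{Q(x_0,r/2)}|\Lambda_\delta u|^2\Bigr)^{1/2}\le Cr^{-d/p'}\|\Lambda_\delta u\|_{L^{p'}}\le Cr^{1-d/2}\|f\|_{L^p},$$
using H\"older with $p'=2d/(d-2)$. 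Since $u(x_0)=\int \Gamma_\delta(x_0,y)\Lambda_\delta(y) f(y)\, dy$ and analogously for $\nabla_x u(x_0)$, varying $f\in L^p$ produces the weighted $L^{p'}$ bounds
$$\|\Lambda_\delta(x_0)\Lambda_\delta(\cdot)\Gamma_\delta(x_0,\cdot)\|_{L^{p'}(Q(y_0,r))}\le Cr^{1-d/2},\quad \|\Lambda_\delta(x_0)\Lambda_\delta(\cdot)\nabla_x\Gamma_\delta(x_0,\cdot)\|_{L^{p'}(Q(y_0,r))}\le Cr^{-d/2}.$$

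To convert these into pointwise bounds, freeze $x_0$ and apply the adjoint analogue of Theorem \ref{local-thm-0} at $y_0$ on scale $r/2$ to $v(y)=\Gamma_\delta(x_0,y)$ and to $w(y)=\nabla_x\Gamma_\delta(x_0,y)$, both of which satisfy $\mathcal{L}_\delta^* = 0$ in $Q(y_0,r/2)$ because $x_0\notin Q(y_0,r/2)$. The same two pointwise inequalities applied at $y_0$ give, for $z\in\{v,w\}$,
$$\Lambda_\delta(y_0)\bigl(|z(y_0)|+r|\nabla z(y_0)|\bigr)\le Cr^{-d/p'}\|\Lambda_\delta z\|_{L^{p'}(Q(y_0,r/2))}.$$
Multiplying by $\Lambda_\delta(x_0)$ and plugging in the first-step $L^{p'}$ bounds yields \eqref{f-3-0} (since $1-d/2-d/p'=2-d$), the $\nabla_y$ half of \eqref{f-3-1} from the gradient term with $z=v$, the $\nabla_x$ half of \eqref{f-3-1} from $z=w$, and finally \eqref{f-3-1x} from the gradient term with $z=w$.

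The main obstacle is the bookkeeping of three distinct $\Lambda_\delta$-factors---one at $x_0$, one at $y_0$, and one on the integration variable---under the two-case form of \eqref{5.5-0} whose right-hand sides differ between $\omega$ and $F$. The twin inequalities $\Lambda_{\delta^2}\le\Lambda_\delta$ and $|u|\le\delta^{-1}|\Lambda_\delta u|$ are precisely what is needed to reconcile the two cases into a single clean weighted estimate with $\Lambda_\delta(x_0)$ factored out on the left; once this is recognized, the structure of the argument parallels Theorems \ref{thm-f-1} and \ref{thm-f-1a}.
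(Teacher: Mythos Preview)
Your proof is correct and follows essentially the same route as the paper's: use the weighted bound of Lemma~\ref{lemma-f-2} together with the local estimate \eqref{5.5-0} to obtain the key inequality $\Lambda_\delta(x_0)\{|u(x_0)|+r|\nabla u(x_0)|\}\le C(\fint|\Lambda_\delta u|^2)^{1/2}$, pass to weighted $L^{p'}$ bounds on $\Gamma_\delta(x_0,\cdot)$ and $\nabla_x\Gamma_\delta(x_0,\cdot)$ by duality, and then close by applying the adjoint local estimate at $y_0$. Your explicit reconciliation of the two cases of \eqref{5.5-0} via $\Lambda_{\delta^2}\le\Lambda_\delta$ and $|u|\le\delta^{-1}|\Lambda_\delta u|$, and your direct handling of $w=\nabla_x\Gamma_\delta(x_0,\cdot)$ to obtain \eqref{f-3-1x}, spell out steps that the paper leaves terse, but the underlying argument is the same.
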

 
 \begin{proof}
 
 Fix $x_0, y_0\in \R^d$ with $r=|x_0-y_0|_\infty<4$.
 Let $u$ be given by (\ref{f-20-0}) with $f\in C_0^\infty(Q(y_0, r); \R^d) $.
 Then $\mathcal{L}_\delta (u)=0$ in $Q(x_0, r)$.
 It follows from (\ref{5.5-0}) that
 \begin{equation}\label{f-3-2}
 \aligned
 \Lambda_\delta (x_0)
 \big\{ |u(x_0)| + r|\nabla u(x_0) | \big\}
 &\le C \left(\fint_{Q(x_0, r)}
 |\Lambda_\delta u|^2 \right)^{1/2}\\
 &\le Cr^{1-\frac{d}{2}}
 \| f\|_{L^p(\R^d)},
 \endaligned
 \end{equation}
 where we have used \eqref{f-20-1} for the last inequality.
 By duality this implies that
 \begin{equation}\label{f-3-3}
 \Lambda_\delta  (x_0)
 \left(\int_{Q(y_0, r)}
 |\Lambda_\delta (y) \Gamma_\delta (x_0, y)|^{p^\prime} \, dy \right)^{1/p^\prime}
 \le C r^{1-\frac{d}{2}}.
 \end{equation}
 Since $\mathcal{L}_\delta^* ( \Gamma_\delta (x_0, \cdot)) =0$ in $\R^d\setminus \{ x_0\}$, we may use the first inequality in \eqref{f-3-2}
 and (\ref{f-3-3}) 
 to obtain
 \begin{equation}\label{f-3-4}
 \Lambda_\delta (x_0)\Lambda_\delta (y_0)
 |\Gamma_\delta (x_0, y_0)|,
 \le C r^{2-d}.
 \end{equation}
 which gives \eqref{f-3-0}.
 The estimates in \eqref{f-3-1} also follow from the first inequality in \eqref{f-3-2}
 and (\ref{f-3-3}). 
 Finally, \eqref{f-3-1x} follows from the first  inequality in \eqref{f-3-2} and \eqref{f-3-1}.
 \end{proof}
  
We end   this section  with  a decay estimate  of  $D\Gamma_\delta (x, y)$ for $x\in F$, as $\delta  \to \infty$.
  
 \begin{thm}\label{thm7.2}
 Let $1\le \delta<\infty$.
 Let $u\in H^1(Q(x_0, R); \R^d)$ be a weak solution of
 $\mathcal{L}_\delta (u) =0$ in $Q(x_0, R)$
 for some $x_0\in F$ and $R\ge 5$.
 Then
 \begin{equation}\label{7.2-0}
 |Du(x_0)|
 \le \frac{C}{\delta^2}
 \left(\fint_{Q(x_0, R)} | D u|^2\, dx \right)^{1/2},
 \end{equation}
 where $C$ depends only on $d$, $\kappa_1$, $\kappa_2$, $\omega$, and $(\sigma, M_0)$.
 \end{thm}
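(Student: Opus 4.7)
The plan is to exploit Lemma \ref{lemma-7.1} to obtain a $\delta^{-2}$ gain for $Du$ on the inclusion $F_k$ containing $x_0$, and transfer this to the pointwise bound at $x_0$ via Remark \ref{remark-5.2b} after subtracting two carefully chosen rigid displacements. Since $R \ge 5$ and $F_k$ (resp.\ $\widetilde{F}_k$) is a shift of a Lipschitz domain in $Q_2$ (resp.\ a smooth domain in $Q_{5/2}$), one checks that $\widetilde{F}_k \subset Q(x_0, R)$; hence $\mathcal{L}_\delta u = 0$ holds on $\widetilde{F}_k$, and Lemma \ref{lemma-7.1} with $f = 0$ yields
\[ \|Du\|_{L^2(F_k)} \le C\delta^{-2}\|Du\|_{L^2(\widetilde{F}_k \setminus \overline{F}_k)}. \]

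To pass to a pointwise bound, I would choose $\phi_k, \tilde{\phi}_k \in \mathcal{R}$ by the Korn--Poincar\'e inequality on $F_k$ and on $\widetilde{F}_k$ respectively, so that $\|u - \phi_k\|_{H^1(F_k)} \le C\|Du\|_{L^2(F_k)}$ and $\|u - \tilde{\phi}_k\|_{H^1(\widetilde{F}_k)} \le C\|Du\|_{L^2(\widetilde{F}_k)}$. Setting $v := u - \phi_k$, we have $\mathcal{L}_\delta v = 0$ and $Dv = Du$ (since $A$ annihilates antisymmetric constant matrices). The first inequality combined with the display above gives $\|\nabla v\|_{L^2(F_k)} \le C\delta^{-2}\|Du\|_{L^2(\widetilde{F}_k \setminus F_k)}$. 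For the $\omega$ side, the difference $\phi_k - \tilde{\phi}_k \in \mathcal{R}$ satisfies $\|\phi_k - \tilde{\phi}_k\|_{L^2(F_k)} \le C\|Du\|_{L^2(\widetilde{F}_k \setminus F_k)}$ by the triangle inequality applied to the two Korn--Poincar\'e inequalities, together with Lemma \ref{lemma-7.1}; since all norms on the finite-dimensional space $\mathcal{R}$ are uniformly equivalent given the bounded geometry of $F_k$, the constant matrix $\nabla(\phi_k - \tilde{\phi}_k)$ has comparable magnitude, so $\|\nabla v\|_{L^2(\widetilde{F}_k \setminus F_k)} \le C\|Du\|_{L^2(\widetilde{F}_k \setminus F_k)}$.

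Next I apply a local Lipschitz estimate to $v$ at $x_0$. If $\text{dist}(x_0, \partial F_k) \ge c$ for a fixed $c > 0$, then the interior $C^{1,\sigma}$ estimate for $-\text{\rm div}(A\nabla v) = 0$ in $F_k$ (valid because $\mathcal{L}_\delta v = 0$ reduces to this equation on $F_k$) gives $|\nabla v(x_0)| \le C\|\nabla v\|_{L^2(F_k)} \le C\delta^{-2}\|Du\|_{L^2(\widetilde{F}_k \setminus F_k)}$. Otherwise, I pick $y_0 \in \partial F_k$ close to $x_0$, flatten $\partial F_k$ near $y_0$ through a uniform $C^{1,\sigma}$ chart, and apply Remark \ref{remark-5.2b} (with $f=0$) on a cube of fixed size $r_0 > 0$ chosen so that $Q(y_0, r_0) \subset \widetilde{F}_k$:
\[ |\nabla v(x_0)| \le C\delta^{-2}\Big(\fint_{Q_{r_0}^-} |\nabla v|^2\Big)^{1/2} + C\Big(\fint_{Q_{r_0}^+} |\nabla v|^2\Big)^{1/2}, \]
where $Q_{r_0}^+ \subset F_k$ and $Q_{r_0}^- \subset \widetilde{F}_k \setminus F_k$. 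The two estimates of the previous paragraph then yield $|\nabla v(x_0)| \le C\delta^{-2}\|Du\|_{L^2(\widetilde{F}_k \setminus F_k)}$ in either case.

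Finally, $|Du(x_0)| = |Dv(x_0)| \le |\nabla v(x_0)|$, and $\widetilde{F}_k \setminus F_k \subset Q(x_0, 5) \cap \omega \subset Q(x_0, R)$, so Theorem \ref{main-thm-1} applied at scale $r=5$ (for $R\ge 8$; the case $5\le R<8$ is trivial), combined with the comparability $|Q(x_0, R) \cap \omega| \asymp |Q(x_0, R)|$ from periodicity, yields $\|Du\|_{L^2(\widetilde{F}_k \setminus F_k)} \le C(\fint_{Q(x_0, R)} |Du|^2)^{1/2}$, completing the argument. The main obstacle is that the single function $v$ must simultaneously be small at scale $\delta^{-2}$ on $F_k$ (to feed the $F$-side term in Remark \ref{remark-5.2b}) yet merely bounded on $\widetilde{F}_k \setminus F_k$ (where the explicit $\delta^{-2}$ in the local estimate supplies the needed gain on its own), forcing the reconciliation of two distinct Korn--Poincar\'e projections in the second paragraph.
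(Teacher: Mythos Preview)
Your proof is correct and follows essentially the same strategy as the paper: both combine Lemma~\ref{lemma-7.1} for the $\delta^{-2}$ gain on $\|Du\|_{L^2(F_k)}$, the local Lipschitz estimate of Remark~\ref{remark-5.2b}, a rigid-displacement subtraction, and Theorem~\ref{main-thm-1} to pass to scale $R$. The only organizational difference is that the paper subtracts one local rigid displacement $\phi$ (Korn on $F_k$) and one global $\psi$ (Korn on $Q(x_0,R)$), first obtaining $|Du(x_0)|\le C\delta^{-2}(\fint_{Q(x_0,R)}|\nabla u|^2)^{1/2}$ and converting $\nabla u$ to $Du$ at the end, whereas you reconcile two local projections $\phi_k$, $\tilde\phi_k$ to work with $Du$ throughout---both routes are equivalent.
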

 
 \begin{proof}
Suppose $x_0\in F_k\subset Q(x_0, 2)$ for some $k$.
It follows from \eqref{local-2-0b} and interior estimates that
\begin{equation}\label{7.2-1}
|\nabla u(x_0)|
\le  C\delta^{-2} \left(\int_{Q(x_0, 2)} |\nabla u|^2 \, dx \right)^{1/2}
+ C \left(\int_{F_k} |\nabla u|^2 \, dx \right)^{1/2}.
\end{equation}
Choose $\phi\in \mathcal{R}$ such that $u-\phi \perp \mathcal{R}$ in $H^1(F_k; \R^d)$.
Since $u-\phi$ satisfies the same conditions as $u$, we may use \eqref{7.2-1} with $u-\phi$ in the place of $u$.
As a result, 
$$
\aligned
|Du(x_0)| & \le |\nabla (u-\phi) (x_0) |\\
& \le  C\delta^{-2} \left(\int_{Q(x_0, 2)} |\nabla u|^2 \, dx \right)^{1/2}
+ C \left(\int_{F_k} | D u|^2 \, dx \right)^{1/2},
\endaligned
$$
where we have used the second Korn inequality as well as the fact 
$|\nabla \phi|\le C \|\nabla u\|_{L^2(F_k)}$.
This, together with \ref{7.1-0} with $f=0$, gives
\begin{equation}\label{7.2-2}
\aligned
|D u(x_0)|
 & \le C \delta^{-2}
\left (\fint_{Q(x_0, 5)} |\nabla u|^2\right)^{1/2}\\
& \le   C \delta^{-2}
\left (\fint_{Q(x_0, R)} |\nabla u|^2\right)^{1/2},
\endaligned
\end{equation}
where we have used Theorem \ref{main-thm-1} for the last inequality.
Choose $\psi$  in  $\mathcal{R}$ so that
$$
\| \nabla  (u-\psi)\|_{L^2(Q(x_0, R)) } \le C \| D u \|_{L^2(Q(x_0, R))}.
$$
It  follows that
$$
\aligned
|Du (x_0)|
 & = |D (u-\psi) (x_0)|\\
 & \le C 
  \delta^{-2}
\left (\fint_{Q(x_0, R)} |\nabla (u-\psi) |^2\right)^{1/2}
\le C\delta^{-2}
 \left (\fint_{Q(x_0, R)} | D  u|^2\right)^{1/2}.
\endaligned
$$
where we have used \eqref{7.2-2} with $u-\psi$ in the place of $u$.
 \end{proof}
 
 \begin{cor}
 Let $1\le \delta<\infty$. Then 
 \begin{equation}\label{7.3-0}
\Lambda_{\delta^2} (x) 
|D_x \Gamma_\delta (x, y)|
+ \Lambda_{\delta^2} (y)
|D_y \Gamma_\delta (x, y)| 
\le C |x-y|^{1-d}
\end{equation}
for any $x, y\in \R^d$ with $|x-y|_\infty\ge 4$.
 \end{cor}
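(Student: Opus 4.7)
The plan is to apply Theorem \ref{thm7.2} directly to the fundamental solution, with the symmetric gradient estimate obtained via a suitable choice of cube $Q(x_0, R)$ around the target point. The estimate $\Lambda_{\delta^2}(z) = 1$ when $z \in \omega$ makes the bound trivial in those regions: $|D\Gamma_\delta| \le |\nabla \Gamma_\delta|$ and the conclusion follows from Theorems \ref{thm-f-1} and \ref{thm-f-1a}. So the real work is to handle the $D_x$ estimate when $x_0 \in F$ (and by the same argument applied to the adjoint operator $\mathcal{L}_\delta^\ast$, the symmetric $D_y$ estimate when $y_0 \in F$).

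Fix $x_0 \in F$ and $y_0 \in \mathbb{R}^d$ with $r := |x_0 - y_0|_\infty \ge 4$. Each column of $\Gamma_\delta(\cdot\,, y_0)$ is a weak solution of $\mathcal{L}_\delta u = 0$ in $\mathbb{R}^d \setminus \{y_0\}$. I would choose $R := \max(r, 5)$ so that $R \ge 5$ and $|y_0 - x_0|_\infty = r \ge R/2$ (trivially when $r \ge 5$, and using $r \ge 4 > 5/2$ when $r < 5$), which places $y_0$ outside $Q(x_0, R)$. Theorem \ref{thm7.2} then yields
\[
|D_x \Gamma_\delta(x_0, y_0)| \le \frac{C}{\delta^2}\left(\fint_{Q(x_0, R)} |D_x \Gamma_\delta(x, y_0)|^2\, dx\right)^{1/2}.
\]

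To bound the right-hand side, I would use the pointwise estimate $|\nabla_x \Gamma_\delta(x, y_0)| \le C |x - y_0|^{1-d}$ valid for all $x \ne y_0$ when $1 \le \delta < \infty$, obtained by combining Theorem \ref{thm-f-1} (for $|x-y_0|_\infty \ge 4$) with Theorem \ref{thm-f-1a} (for $|x-y_0|_\infty < 4$). Since every $x \in Q(x_0, R)$ satisfies $|x - y_0|_\infty \ge r - R/2$, I get a uniform positive lower bound on $|x-y_0|$, and in the principal case $r \ge 5$ (so $R = r$) the bound $|x - y_0| \ge R/2$ gives $(\fint_{Q(x_0, R)} |\nabla_x \Gamma_\delta|^2)^{1/2} \le C R^{1-d} = C r^{1-d}$. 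Combining these yields $\delta^2 |D_x \Gamma_\delta(x_0, y_0)| \le C r^{1-d}$, which is the desired estimate.

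The only delicate range is $r \in [4, 5)$, where $R = 5$ is forced and the lower bound $|x - y_0|_\infty \ge r - 5/2 \ge 3/2$ still yields a finite but $r$-independent constant from $\int_{Q(x_0, 5)} |x - y_0|^{2(1-d)}\, dx$. This produces $\delta^2 |D_x \Gamma_\delta(x_0, y_0)| \le C$, which suffices because $r^{1-d}$ is bounded below by a constant on $[4, 5)$; thus the desired scaling still holds after absorbing a harmless numerical factor. I expect this finite-range case to be the only step requiring attention, but it is essentially a bookkeeping check rather than a genuine obstacle.
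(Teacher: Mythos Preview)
Your proposal is correct and follows essentially the same approach as the paper: first obtain the global pointwise bound $|\nabla_x \Gamma_\delta(x,y)| + |\nabla_y \Gamma_\delta(x,y)| \le C|x-y|^{1-d}$ from Theorems \ref{thm-f-1} and \ref{thm-f-1a}, then apply Theorem \ref{thm7.2} (and its adjoint analogue) to gain the factor $\delta^{-2}$ at points of $F$. The paper compresses this into two sentences; your version simply fills in the bookkeeping, including the harmless case $4\le r<5$.
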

 
 \begin{proof}
Since $\delta\ge 1$, it follows by Theorems \ref{thm-f-1} and \ref{thm-f-1a} that
$$
|\nabla_x \Gamma_\delta  (x, y)| + |\nabla_y \Gamma_\delta   (x, y)| \le
C |x-y|^{1-d}.
$$
for any $x, y\in \R^d$ and $x\neq y$.
 This, together with Theorem \ref{thm7.2}, gives \eqref{7.3-0}.
 \end{proof}
 
 
 \section{Lipschitz estimates for $\mathcal{L}_\delta (u)=f$} \label{section-7}
 
 The goal of this section  is to prove \eqref{f-Lip-f}.
 The case $0<\delta<\infty$ follows readily from Theorem \ref{main-thm-2} and estimates of fundamental solutions in Section \ref{section-6}.
 To handle the cases $\delta=0$ and  $\delta =\infty$, we use an approximation  argument.
 
 Let $\Omega$ be a bounded Lipschitz domain in $\R^d$.
 We call $\Omega$ a type II domain (with respect to $\omega$)  if $F_k \cap \Omega\neq \emptyset$ implies that 
 $\widetilde{F}_k \subset \Omega$.
 In particular, if $\Omega$ is a type II Lipschitz domain, then $\partial \Omega \cap \partial \omega =\emptyset$.
 
 \begin{lemma}\label{lemma-app-1}
Assume that $A$ and $\omega$ satisfy the same conditions as in Theorem \ref{main-thm-1}.
 Let $0<\delta\le 1$ and $\Omega$ be a type II Lipschitz domain.
 Let $u_\delta \in H^1(\Omega; \R^d)$ be  a weak solution of
 $\mathcal{L}_\delta (u_\delta)=f \chi_\omega $ in $\Omega$ and $u_0\in H^1(\Omega; \R^d)$ a weak solution of
 $\mathcal{L}_0 (u_0)=f\chi_\omega $ in $\Omega$, where $f\in L^2(\Omega; \R^d)$.
 Suppose that $u_\delta=u_0$ on $\partial\Omega$.
 Then
 \begin{equation}\label{app-1-0}
 \| u_\delta -u_0\|_{H^1(\Omega)}
 \le C \delta \| D u_0\|_{L^2(\Omega\cap F)},
 \end{equation}
 where $C$ depends only on $d$, $\kappa_1$, $\kappa_2$, and $\omega$.
 \end{lemma}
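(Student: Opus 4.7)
The plan is to set $w = u_\delta - u_0$, which lies in $H_0^1(\Omega;\R^d)$ by the assumption $u_\delta = u_0$ on $\partial\Omega$, and to exploit that the two solutions satisfy the same equation in $\Omega\cap\omega$ against the same source. Subtracting the weak formulation of $\mathcal L_\delta(u_\delta)=f\chi_\omega$ from that of $\mathcal L_0(u_0)=f\chi_\omega$ tested against an arbitrary $v\in H_0^1(\Omega;\R^d)$ cancels the $f$ term and leaves
\begin{equation*}
\int_{\Omega\cap\omega} A\nabla w\cdot\nabla v\,dx \;+\; \delta^2\int_{\Omega\cap F} A\nabla u_\delta\cdot\nabla v\,dx \;=\; 0.
\end{equation*}
I would then choose $v=w$ and split $\nabla u_\delta = \nabla w + \nabla u_0$ on $F$, apply the ellipticity bound \eqref{e-2} on the left, and use Cauchy--Schwarz plus Young's inequality on the remainder to absorb a small multiple of $\delta^2\|Dw\|^2_{L^2(\Omega\cap F)}$. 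This yields the weighted energy estimate
\begin{equation*}
\|Dw\|_{L^2(\Omega\cap\omega)}^2 + \delta^2\|Dw\|_{L^2(\Omega\cap F)}^2 \;\le\; C\delta^2\|Du_0\|_{L^2(\Omega\cap F)}^2.
\end{equation*}

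The $\omega$ piece immediately gives $\|Dw\|_{L^2(\Omega\cap\omega)}\le C\delta\|Du_0\|_{L^2(\Omega\cap F)}$, but on the inclusions one only has $\|Dw\|_{L^2(\Omega\cap F)}\le C\|Du_0\|_{L^2(\Omega\cap F)}$, with no $\delta$ gain. Recovering the $\delta$ factor on $F$ is the main obstacle, and this is precisely where the type II hypothesis enters. Since each $F_k$ meeting $\Omega$ satisfies $\widetilde F_k\subset\Omega$, the annular shell $\widetilde F_k\setminus\overline{F_k}$ sits entirely inside $\Omega\cap\omega$. Because $f\chi_\omega$ vanishes on $F$ and $u_0$ is taken to satisfy $-\mathrm{div}(A\nabla u_0)=0$ in $F$ under the convention of Section \ref{section-1}, the difference $w$ is a weak solution of $-\mathrm{div}(A\nabla w)=0$ in each $F_k$. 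Lemma \ref{lemma-2.2} then supplies $\|Dw\|_{L^2(F_k)}\le C\|Dw\|_{L^2(\widetilde F_k\setminus\overline F_k)}$, and summing over the disjoint inclusions gives
\begin{equation*}
\|Dw\|_{L^2(\Omega\cap F)}\;\le\; C\|Dw\|_{L^2(\Omega\cap\omega)}\;\le\; C\delta\|Du_0\|_{L^2(\Omega\cap F)},
\end{equation*}
hence $\|Dw\|_{L^2(\Omega)}\le C\delta\|Du_0\|_{L^2(\Omega\cap F)}$.

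From there the conclusion is routine: since $w\in H_0^1(\Omega;\R^d)$, the first Korn inequality (with universal constant under zero trace) upgrades $Dw$ control to $\nabla w$ control, and Poincar\'e's inequality bounds $\|w\|_{L^2(\Omega)}$ by $\|\nabla w\|_{L^2(\Omega)}$, yielding the stated estimate $\|w\|_{H^1(\Omega)}\le C\delta\|Du_0\|_{L^2(\Omega\cap F)}$. The key ingredients are thus the weighted energy identity produced by subtraction, the Caccioppoli-type inclusion-to-shell transfer from Lemma \ref{lemma-2.2} that converts the nontrivial $\delta^2$ weight on $\omega$ into $\delta$ control on $F$ under the type II assumption, and the standard Korn/Poincar\'e wrap-up on $H_0^1(\Omega)$.
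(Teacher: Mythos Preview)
Your proof is correct and follows essentially the same route as the paper's own argument: subtract the two weak formulations and test with $w=u_\delta-u_0$ to obtain the weighted energy inequality \eqref{app-1-2}, then invoke Lemma~\ref{lemma-2.2} on each inclusion (using that $-\mathrm{div}(A\nabla w)=0$ in $F_k$ and the type~II hypothesis) to transfer the $\delta$-gain from $\Omega\cap\omega$ to $\Omega\cap F$, and finish with the first Korn inequality. The only cosmetic difference is that you spell out the Poincar\'e step for the $L^2$ part of the $H^1$ norm, which the paper leaves implicit.
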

 
 \begin{proof}
 Let $w=u_\delta -u_0\in H^1_0(\Omega; \R^d) $.
 Since $u_0$ is a weak solution of $\mathcal{L}_0 (u_0)=f\chi_\omega $ in $\Omega$,
 \begin{equation}\label{app-1-1}
 \int_{\Omega\cap \omega} A\nabla u_0 \cdot \nabla v \, dx
 =\int_{\Omega\cap \omega} f \cdot v\, dx
 \end{equation}
 for any $v\in H^1_0(\Omega; \R^d)$.  We also assume that $-\text{\rm div}(A\nabla u_0)=0$ in $F\cap \Omega$.
 Using
 $$
 \int_{\Omega\cap \omega} A\nabla u_\delta \cdot \nabla v\, dx
 + \delta^2 \int_{\Omega\cap F} A\nabla u_\delta \cdot \nabla v\, dx
 =\int_{\Omega\cap \omega} f\cdot v\, dx,
 $$
 we obtain 
 $$
 \int_{\Omega\cap \omega}
 A\nabla w \cdot \nabla w\, dx
 + \delta^2 \int_{\Omega\cap F} A \nabla w \cdot \nabla w 
 =-\delta^2 \int_{\Omega\cap F} A\nabla u_0 \cdot \nabla w\, dx.
 $$
 Hence, by \eqref{ellipticity} and  the Cauchy inequality, 
 \begin{equation}\label{app-1-2}
 \int_{\Omega\cap \omega}
 |D w|^2\, dx
 + \delta^2 \int_{\Omega\cap F} |D w|^2\, dx
 \le C \delta^2 \int_{\Omega\cap F} |D u_0|^2\, dx.
 \end{equation}
 Note that $\text{\rm div}(A\nabla w)=0$ in $F_k$ for any $F_k\subset \Omega$.
 By Lemma \ref{lemma-2.2} we have 
 $$
 \| D w\|_{L^2(F_k)}\le C \| D w \|_{L^2(\widetilde{F}_k \setminus \overline{F}_k)}.
 $$
 As a result, $\| Dw\|_{L^2(\Omega\cap F)}\le C \| Dw \|_{L^2(\Omega\cap \omega)}$,
 where we have used the assumption that $\Omega$ is a type II Lipschitz domain.
 This, together with \eqref{app-1-2} and the first Korn inequality \cite[p.13]{OSY-1992}, gives \eqref{app-1-0}.
 \end{proof}
 
 \begin{lemma}\label{lemma-app-2}
 Assume that $A$ and $\omega$ satisfy the same conditions as in Theorem \ref{main-thm-1}.
 Let $1\le  \delta<\infty$ and $\Omega$ be a type II Lipschitz domain.
 Let $u_\delta \in H^1(\Omega; \R^d)$ be  a weak solution of
 $\mathcal{L}_\delta (u_\delta)=f $ in $\Omega$ and $u_\infty\in H^1(\Omega; \R^d)$ a weak solution of
 $\mathcal{L}_\infty (u_\infty)=f$ in $\Omega$, where $f\in L^2(\Omega; \R^d)$.
 Suppose that $u_\delta=u_\infty$ on $\partial\Omega$.
 Then
 \begin{equation}\label{app-2-0}
 \| u_\delta -u_\infty \|_{H^1(\Omega)}
 \le C \delta^{-1} \big\{ 
  \| D u_\infty\|_{L^2(\Omega\cap \omega)}
  + \| f\|_{L^2(\Omega)} \big\},
 \end{equation}
 where $C$ depends only on $d$, $\kappa_1$, $\kappa_2$, and $\omega$.
 \end{lemma}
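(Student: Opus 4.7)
My plan follows the pattern of Lemma~\ref{lemma-app-1}: derive an energy identity for $w := u_\delta - u_\infty \in H^1_0(\Omega;\R^d)$, then close it quantitatively. The sharp $\delta^{-1}$ factor requires a self-improving use of Lemma~\ref{lemma-7.1}. First I would test the weak equation for $u_\delta$ with $v=w$. Since $Du_\infty=0$ on each $F_k$, the matrix $\nabla u_\infty$ is constant antisymmetric on $F_k$; the elasticity symmetries \eqref{ellipticity} force $A\xi=0$ whenever $\xi$ is antisymmetric, so $A\nabla u_\infty \equiv 0$ on $\Omega\cap F$. A short rearrangement then produces
\begin{equation*}
\int_{\Omega\cap\omega} A\nabla w\cdot\nabla w + \delta^2 \int_{\Omega\cap F} A\nabla w\cdot\nabla w = \int_\Omega f\cdot w - \int_{\Omega\cap\omega} A\nabla u_\infty\cdot\nabla w.
\end{equation*}

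Integrating by parts the last integral using $-\text{\rm div}(A\nabla u_\infty)=f$ in $\Omega\cap\omega$, and applying the transmission condition $\int_{\partial F_k}(\partial u_\infty/\partial\nu)_-\cdot\phi_k = -\int_{F_k} f\cdot\phi_k$ valid for any rigid $\phi_k\in\mathcal{R}$, I arrive at
\begin{equation*}
B_\delta(w,w) = \sum_k \left\{ \int_{F_k} f\cdot(w-\phi_k) + \int_{\partial F_k} \Big(\frac{\partial u_\infty}{\partial \nu}\Big)_-\cdot (w-\phi_k) \right\},
\end{equation*}
where $B_\delta(w,w)$ denotes the LHS above and the sum runs over those $F_k\subset\Omega$ (the type II hypothesis guarantees $\widetilde F_k\subset\Omega$). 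I would then choose $\phi_k\in\mathcal{R}$ by the second Korn inequality so that $\|w-\phi_k\|_{H^1(F_k)}\le C\|Dw\|_{L^2(F_k)}$. Cauchy--Schwarz controls the interior integral, and a standard $H^{-1/2}$--$H^{1/2}$ duality via extending $w-\phi_k$ into a collar in $\widetilde F_k\cap\omega$ (and then using $-\text{\rm div}(A\nabla u_\infty)=f$ there) controls the boundary integral by $C(\|Du_\infty\|_{L^2(\widetilde F_k\cap\omega)}+\|f\|_{L^2(\widetilde F_k)})\,\|Dw\|_{L^2(F_k)}$. Summing and applying Cauchy--Schwarz in $k$ gives
\begin{equation*}
B_\delta(w,w)\le C M\,\|Dw\|_{L^2(\Omega\cap F)}, \qquad M:=\|Du_\infty\|_{L^2(\Omega\cap\omega)}+\|f\|_{L^2(\Omega)}.
\end{equation*}

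The crucial closing step is Lemma~\ref{lemma-7.1}. Since $u_\delta$ satisfies $-\text{\rm div}(\Lambda_{\delta^2} A\nabla u_\delta)=f$ in each $\widetilde F_k$, applying Lemma~\ref{lemma-7.1}, summing, and using $Dw=Du_\delta$ on $F$ together with $Du_\delta=Dw+Du_\infty$ on $\omega$, I get
\begin{equation*}
\delta^2\|Dw\|_{L^2(\Omega\cap F)} \le C\big( M+\|Dw\|_{L^2(\Omega\cap\omega)}\big).
\end{equation*}
Writing $X=\|Dw\|_{L^2(\Omega\cap\omega)}$ and $Y=\|Dw\|_{L^2(\Omega\cap F)}$, the two estimates combined with the coercivity $B_\delta(w,w)\ge c(X^2+\delta^2 Y^2)$ read $cX^2+c\delta^2 Y^2\le CMY\le CM\delta^{-2}(M+X)$; Young's inequality absorbs $CMX\delta^{-2}$ into $(c/2)X^2$ and (since $\delta\ge 1$) produces $X^2+\delta^2 Y^2\le CM^2/\delta^2$, whence $X\le CM/\delta$ and $Y\le CM/\delta^2$. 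Korn's first inequality on $H^1_0(\Omega;\R^d)$ together with Poincar\'e then give $\|w\|_{H^1(\Omega)}\le C\|Dw\|_{L^2(\Omega)}\le CM/\delta$.

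The main obstacle is the extraction of the $\delta^{-1}$ factor itself: a direct application of coercivity and Cauchy--Schwarz to the energy identity yields only $\|w\|_{H^1(\Omega)}\le CM$, with no $\delta$-dependence. The improvement requires first squeezing the right-hand side of the identity for $B_\delta(w,w)$ down to a multiple of $\|Dw\|_{L^2(\Omega\cap F)}$ alone -- which is precisely what the Korn-based choice of $\phi_k$ and the boundary-flux duality accomplish -- and only then inserting the $\delta^{-2}$ improvement via Lemma~\ref{lemma-7.1} before applying Young's inequality.
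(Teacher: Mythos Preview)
Your argument is correct and reaches the same key estimate as the paper, namely
\[
\|Dw\|_{L^2(\Omega\cap\omega)}^2+\delta^2\|Dw\|_{L^2(\Omega\cap F)}^2 \;\le\; C\,M\,\|Dw\|_{L^2(\Omega\cap F)},
\qquad M=\|Du_\infty\|_{L^2(\Omega\cap\omega)}+\|f\|_{L^2(\Omega)},
\]
but the route and the closing are both different from what the paper does, and your closing is more elaborate than necessary.

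\medskip
\textbf{How the paper argues.} Instead of integrating by parts and invoking the transmission identity on $\partial F_k$, the paper works entirely at the weak-formulation level: for an arbitrary $\phi\in H^1_0(\Omega)$ it constructs a corrected test function $v=\phi-\sum_k w_k$ that is rigid on each $F_k$ (with $w_k\in H^1_0(\widetilde F_k)$ extending $\phi-g_k$ from $F_k$ and $g_k\in\mathcal R$ chosen by Korn), so that $v$ is admissible for the weak formulation of $\mathcal L_\infty(u_\infty)=f$. This yields directly
\[
\Big|\int_{\Omega\cap\omega}A\nabla u_\infty\cdot\nabla\phi-\int_\Omega f\cdot\phi\Big|
\le C\,M\,\|D\phi\|_{L^2(\Omega\cap F)},
\]
and taking $\phi=w$ gives the displayed inequality above. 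Your integration-by-parts/flux-duality derivation is the ``dual'' version of this same computation: you keep $w$ fixed and push the correction onto $u_\infty$ via boundary terms, whereas the paper keeps the $u_\infty$-equation fixed and corrects the test function. Both produce the same estimate; the paper's version avoids explicit boundary traces and is a bit cleaner.

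\medskip
\textbf{The closing step.} Once you have $c(X^2+\delta^2Y^2)\le CMY$ with $X=\|Dw\|_{L^2(\Omega\cap\omega)}$ and $Y=\|Dw\|_{L^2(\Omega\cap F)}$, the paper simply applies the Cauchy inequality $CMY\le \tfrac{c}{2}\delta^2Y^2+C'\delta^{-2}M^2$ to get $X\le C\delta^{-1}M$ and $Y\le C\delta^{-2}M$ immediately. Your appeal to Lemma~\ref{lemma-7.1} (the bound $\delta^2Y\le C(M+X)$) is therefore superfluous: it is correct, but it only reproduces information already encoded in the energy inequality you have just derived. The $\delta^{-1}$ gain is entirely captured by having isolated $Y$ on the right-hand side and by the weight $\delta^2$ on $Y^2$ on the left.
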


\begin{proof}
Since  $u_\infty$ is a weak solution of $\mathcal{L}_\infty (u_\infty) =f$, it follows that 
$$
\int_{\Omega\cap \omega} A\nabla u_\infty \cdot \nabla v\, dx
=\int_\Omega f \cdot  v\, dx
$$
for any $v\in H^1_0(\Omega; \R^d)$ with $Dv=0$ in $\Omega\cap F$, and  that 
$D u_\infty=0$ in $\Omega\cap F$.
Let $\phi \in H^1_0(\Omega; \R^d)$.
For each $F_k \subset \Omega$ and $g_k \in \mathcal{R}$, let $w_k \in H^1_0(\widetilde{F}_k; \R^d)$ be an extension 
of $\phi-g_k$ from $F_k$ to $\widetilde{F}_k$ with the property that
$$
\| w_k \|_{H^1(\widetilde{F}_k)} \le C \| \phi- g_k\|_{H^1(F_k)}.
$$
Extend $w_k$ from $\widetilde{F}_k$ to $\R^d$ by zero, and define 
$$
v=\phi -\sum_k w_k, 
$$
where the sum is taken over those $k$'s for which $F_k\subset \Omega$.
Note that $v\in H^1_0(\Omega; \R^d)$ and $v =g_k$ on $F_k$. Since $\Omega$ is a type II domain,
it follows that $Dv =0$ on $\Omega\cap F$.
As a result, we see that 
$$
\aligned
 & \Big|
\int_{\Omega\cap \omega}  A \nabla u_\infty\cdot \nabla \phi\, dx -\int_{\Omega} f \cdot \phi \, dx \Big|\\
&
= \Big| \int_{\Omega\cap \omega}  A \nabla u_\infty\cdot   \sum_k  \nabla w_k \, dx -\int_{\Omega} f \cdot \sum_k w_k \, dx \Big|\\
&\le C \sum_k \| D u_\infty\|_{L^2(\widetilde{F}_k \setminus \overline{F}_k)}
\| D w_k \|_{L^2(\widetilde{F}_k)}
+ C \sum_k \| f\|_{L^2(\widetilde{F}_k)}
\| w_k \|_{L^2(\widetilde{F}_k)}\\
&\le C \sum_k
\big( \| D u_\infty\|_{L^2(\widetilde{F}_k \setminus \overline{F}_k)}
+ \| f\|_{L^2(\widetilde{F}_k)}\big) \| \phi -g_k \|_{H^1(F_k)}\\
& \le C \big( \| D u_\infty\|_{L^2(\Omega\cap \omega)}
+\| f\|_{L^2(\Omega)} \big) \| D \phi\|_{L^2(\Omega\cap F)},
\endaligned
$$
where we have chosen $g_k \in \mathcal{R}$ so that
$\| \phi - g_k \|_{H^1(F_k)} \le  C \| D \phi\|_{L^2(F_k)}$.
This, together with 
$$
\int_{\Omega\cap \omega} A\nabla u_\delta \cdot \nabla \phi\, dx
+ \delta^2 \int_{\Omega\cap F} A \nabla u_\delta \cdot \nabla \phi\, dx
=\int_\Omega f \cdot \phi, dx,
$$
implies that
$$
\aligned
 & \Big| \int_{\Omega\cap \omega} A\nabla (u_\delta -u_\infty) \cdot \nabla \phi\, dx
+ \delta^2 \int_{\Omega\cap F} A \nabla (u_\delta -u_\infty)  \cdot \nabla \phi\, dx \Big|\\
&\le 
C \big( \| D u_\infty\|_{L^2(\Omega\cap \omega)}
+\| f\|_{L^2(\Omega)} \big) \| D \phi\|_{L^2(\Omega\cap F)}.
\endaligned
$$
By letting $w=u_\delta -u_\infty$ and $ \phi=w$ in the inequality above, we obtain
$$
\| D w \|_{L^2(\Omega\cap \omega)}
+\delta \| D w\|_{L^2(\Omega\cap F)}
\le C \delta^{-1} \big\{ \| D u_\infty\|_{L^2(\Omega\cap \omega)}
+ \| f\|_{L^2(\Omega)} \big\},
$$
where we have also used the Cauchy inequality.
Since $\delta\ge 1$ and $w\in H^1_0(\Omega; \R^d)$, the estimate \eqref{app-2-0}
follows by the first Korn inequality.
\end{proof}

\begin{thm}\label{main-thm-3}
Let $ d\ge 3$.
Assume that $A$ satisfies \eqref{ellipticity}, \eqref{smoothness}, and is 1-periodic.
Also assume that each $F_k$ is a bounded $C^{1, \sigma}$ domain.

\begin{enumerate}

\item

Let  $0\le \delta < 1$.
Let $u\in H^1(\Omega; \R^d)$ be a weak solution of
$\mathcal{L}_\delta (u)=f\chi_\omega$ in $Q(x_0, R)$ for some $R\ge 6$, where  $f\in L^p(Q(x_0, R);  \R^d)$ for some $p>d$.
Then
\begin{equation}\label{lip-f-8}
|\nabla u (x_0)|
\le   C \left(\fint_{Q(x_0, R)\cap \omega } |\nabla u|^2 \right)^{1/2}
+C R \left(\fint_{Q(x_0, R)} |f|^p \right)^{1/p},
\end{equation}
where $C$ depends only on $d$, $\kappa_1$, $\kappa_2$, $p$, $\omega$, and $(\sigma, M_0)$ in \eqref{smoothness}.

\item

Let $1\le  \delta \le \infty$.
Let $u\in H^1(\Omega; \R^d)$ be a weak solution of
$\mathcal{L}_\delta (u)= f$ in $Q(x_0, R)$ for some $R\ge 6$, where  $f\in L^p(Q(x_0, R);  \R^d)$ for some $p>d$.
Then
\begin{equation}\label{lip-f-9}
|\nabla u (x_0)|
\le  \left(\fint_{Q(x_0, R)\cap \omega } |\nabla u|^2 \right)^{1/2}
+C R \left(\fint_{Q(x_0, R)} |f|^p \right)^{1/p},
\end{equation}
where $C$ depends only on $d$, $\kappa_1$, $\kappa_2$, $p$, $\omega$, and $(\sigma, M_0)$.

\end{enumerate}
\end{thm}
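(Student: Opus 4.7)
The plan is to reduce the inhomogeneous estimate to the homogeneous Lipschitz estimate of Theorem \ref{main-thm-2} by subtracting off a Newtonian-type potential built from the matrix of fundamental solutions $\Gamma_\delta$ of Section \ref{section-6}. This handles the range $0<\delta<\infty$ directly. The endpoint cases $\delta=0$ and $\delta=\infty$ are then obtained from the $\delta>0$ estimates by an approximation argument based on Lemmas \ref{lemma-app-1} and \ref{lemma-app-2}.

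For $0<\delta<\infty$, I would fix a cutoff $\eta\in C_0^\infty(Q(x_0,R/2))$ with $\eta\equiv 1$ on $Q(x_0,R/4)$ and $|\nabla\eta|\le CR^{-1}$, set $g=\eta f$ in case (2) and $g=\eta f\chi_\omega$ in case (1), and define
$$
v(x)=\int_{\R^d}\Gamma_\delta(x,y)\,g(y)\,dy,\qquad w=u-v.
$$
Then $\mathcal{L}_\delta v=g$ weakly in $\R^d$, so $\mathcal{L}_\delta w=0$ in $Q(x_0,R/4)$. For $R\ge 16$, Theorem \ref{main-thm-2} applied to $w$ yields
$$
|\nabla w(x_0)|\le C\Big(\fint_{Q(x_0,R)\cap\omega}|\nabla u|^2\Big)^{1/2}+C\Big(\fint_{Q(x_0,R/2)\cap\omega}|\nabla v|^2\Big)^{1/2},
$$
while the residual range $6\le R<16$ is handled directly by Theorem \ref{local-thm-0}. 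The representation $\nabla v(x_0)=\int\nabla_x\Gamma_\delta(x_0,y)\,g(y)\,dy$, combined with the pointwise bounds of Theorems \ref{thm-f-1}, \ref{thm-f-1a}, \ref{thm-f-3} and H\"older's inequality with $p>d$, gives $|\nabla v(x_0)|\le CR(\fint_{Q(x_0,R)}|f|^p)^{1/p}$. The $L^2$ norm of $\nabla v$ on $Q(x_0,R/2)\cap\omega$ is controlled by the same quantity via Lemma \ref{lemma-f-1} in case (1) and Remark \ref{re-large} in case (2).

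For $\delta=0$, let $u_0$ solve $\mathcal{L}_0 u_0=f\chi_\omega$ in $Q(x_0,R)$. I would choose a type II Lipschitz cube $\Omega\subset Q(x_0,R)$ with $Q(x_0,R-3)\subset\Omega$ and $x_0$ in its interior, obtained by small perturbations of the faces of $Q(x_0,R-3)$ to avoid clipping the collars $\widetilde{F}_k$. For $0<\delta<1$, let $u_\delta\in H^1(\Omega;\R^d)$ solve $\mathcal{L}_\delta u_\delta=f\chi_\omega$ in $\Omega$ with $u_\delta=u_0$ on $\partial\Omega$; Lemma \ref{lemma-app-1} gives $\|u_\delta-u_0\|_{H^1(\Omega)}\le C\delta\|Du_0\|_{L^2(\Omega\cap F)}\to 0$. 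Applying the $\delta\in(0,1)$ estimate to $u_\delta$ on a slightly smaller concentric cube and passing to the limit (the pointwise functional $v\mapsto\nabla v(x_0)$ being continuous in $H^1$ on solutions via the local Lipschitz estimate of Theorem \ref{local-thm-0}) delivers the bound for $u_0$. The case $\delta=\infty$ is analogous, with Lemma \ref{lemma-app-2} replacing Lemma \ref{lemma-app-1} and $\delta\to\infty$.

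The main obstacle I foresee is case (1) when $x_0\in F$: by Theorem \ref{thm-f-3} the pointwise bound on $|\nabla_x\Gamma_\delta(x_0,y)|$ carries a factor $\Lambda_\delta(x_0)^{-1}=\delta^{-1}$ when $x_0\in F$ and $y\in\omega$ is near the interface, which spoils the direct estimate of $\nabla v(x_0)$. I would handle this by first applying the interface estimate of Remark \ref{remark-5.1a} on a unit neighborhood of the $\partial F_k$ containing $x_0$ to bound $|\nabla u(x_0)|$ in terms of $L^2$ averages of $\nabla u$ on both sides of the interface plus a local $L^p$ norm of $f$, then controlling the $\omega$-side term by the already-proved $x_0\in\omega$ case and the $F$-side term through Lemma \ref{lemma-7.1}.
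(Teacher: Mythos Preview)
Your approach is essentially that of the paper: subtract a potential built from $\Gamma_\delta$, reduce to Theorem \ref{main-thm-2}, and handle $\delta=0,\infty$ by the approximation Lemmas \ref{lemma-app-1}--\ref{lemma-app-2} on a type II domain. A few points of comparison, and one correction.

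First, the cutoff $\eta$ is unnecessary. The paper simply sets $v(x)=\int_{Q_R}\Gamma_\delta(x,y)f(y)\,dy$ in case (2) (resp.\ $\int_{Q_R\cap\omega}$ in case (1)); then $\mathcal{L}_\delta(u-v)=0$ holds on all of $Q_R$, and Theorem \ref{main-thm-2} applies directly without a separate small-$R$ argument.

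Second, your citation for the $x_0\in F$ obstacle in case (1) is wrong: Lemma \ref{lemma-7.1} is stated for $1<\delta<\infty$, not $0<\delta<1$. The estimate you actually want---controlling $\|\nabla u\|_{L^2(F_k)}$ by $\|\nabla u\|_{L^2(\widetilde F_k\setminus\overline{F}_k)}$---is Lemma \ref{lemma-2.2} (inequality \eqref{2.2-0}), which applies because $-\text{div}(A\nabla u)=0$ in $F_k$ when the right-hand side is $f\chi_\omega$.

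More to the point, the paper resolves the $x_0\in F$ obstacle more directly, and for $v$ rather than $u$. The key observation is that the local Lipschitz estimate (Theorem \ref{local-thm-0} / Remark \ref{remark-5.1a}) applied to $v$ yields
\[
|\nabla v(0)|\le C\Big(\fint_{Q_1}|\nabla v|^2\Big)^{1/2}+C\Big(\fint_{Q_1}|f|^p\Big)^{1/p}
\]
\emph{uniformly in $\delta\in(0,1)$}, because the dangerous $\delta^{-2}$ term in \eqref{local-2b} multiplies $\|f\|_{L^p}$ over the $F$-side, which vanishes since $f\chi_\omega=0$ on $F$. One then passes the $L^2$ average to $Q_4\cap\omega$ via \eqref{2.2-0}, where the weighted bound $\|\Lambda_\delta\nabla v\|_{L^\infty(Q_R)}\le CR(\fint|f|^p)^{1/p}$ (from Theorems \ref{thm-f-1} and \ref{thm-f-3}, using $\Lambda_\delta(y)=1$ for $y\in\omega$) becomes unweighted. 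This avoids the detour through the already-proved $x_0\in\omega$ case.
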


 \begin{proof}
 
 By translation we may assume $x_0=0$.
 We consider 4 cases.
 
 \medskip
 
 \noindent{\bf Case 1.} Assume $1\le \delta  < \infty$.
 If $f=0$, this  is given by Theorem \ref{main-thm-2}.
 In general, let
 \begin{equation}\label{lip-f-10}
 v(x) =\int_{Q_R} \Gamma_\delta (x, y) f(y)\, dy.
 \end{equation}
 Then $\mathcal{L}_\delta (v)=f$ in $Q_R$, and by Theorems \ref{thm-f-1} and \ref{thm-f-1a},
 \begin{equation}\label{lip-f-11}
 \|\nabla v\|_{L^\infty (Q_R)} \le CR  \left(\fint_{Q_R} |f|^p\right)^{1/p}
 \end{equation}
for $p>d$.  Hence,
 $$
 \aligned
 |\nabla u(0)|
 &\le |\nabla (u-v)(0)| +|\nabla v(0)|\\
 & \le C \left(\fint_{Q_R\cap \omega } |\nabla (u-v)|^2\right)^{1/2}
 + C R  \left(\fint_{Q_R} |f|^p\right)^{1/p}\\
 &\le C \left(\fint_{Q_R\cap \omega } |\nabla u|^2 \right)^{1/2}
+C R \left(\fint_{Q_R  } |f|^p \right)^{1/p},
\endaligned
$$
where we have used the fact $\mathcal{L}_\delta (u-v)=0$ in $Q_R$.
 
 \medskip
 
 \noindent{\bf Case 2.} 
 Assume $\delta=\infty$.
 In this case we use an approximation argument.
 Choose a type II Lipschitz domain $\Omega$ such that $Q_{R-2}\subset \Omega\subset Q_R$.
 Let $u_\delta\in H^1(\Omega: \R^d)$ be a weak solution of $\mathcal{L}_\delta (u_\delta)=f$ in $\Omega$
 such that $u_\delta=u$ on $\partial\Omega$.
 It follows by Lemma \ref{lemma-app-2} that $u_\delta \to u$ in $H^1(\Omega; \R^d)$,  as $\delta \to \infty$.
 By the proof for Case 1, 
 $$
 \left(\fint_{Q_r} |\nabla u_\delta|^2\right)^{1/2}
 \le C \left(\fint_{Q_{R-2}\cap \omega } |\nabla u_\delta |^2 \right)^{1/2}
+C R \left(\fint_{Q_R} |f|^p \right)^{1/p},
 $$
 for  $r\in (0, 1/4)$.
 The proof is complete by letting $\delta\to \infty$ and then $r\to 0$ in the inequality above.
 
 \medskip
 
 \noindent{\bf Case 3.}
 Assume  $0<\delta< 1$.
 If $f=0$, the estimate \eqref{lip-f-8} is given by   Theorem \ref{main-thm-2}.
 In general, let
 $$
 v(x)=\int_{Q_R\cap \omega} \Gamma_\delta (x, y) f(y)\, dy.
 $$
 Then
 $
 \mathcal{L}_\delta (v)=f\chi_\omega$ in $Q_R$, and by Theorems \ref{thm-f-1} and \ref{thm-f-3},
 $$
 \|\Lambda_\delta  \nabla  v\|_{L^\infty(Q_R)}
 \le C R \left(\fint_{Q_R} |f|^p \right)^{1/p}.
 $$
 Observe that since $\text{\rm div}(A\nabla v)=0$ in $Q_R\cap F$, it follows from Theorem \ref{local-thm-0}  that
 $$
 \aligned
 |\nabla v (0)|
  & \le C \left( \fint_{Q_1} |\nabla v|^2 \right)^{1/2}
 + C \left(\fint_{Q_1} |f|^p \right)^{1/p}\\
&  \le C  \left(\fint_{Q_4\cap \omega} |\nabla v|^2 \right)^{1/2}
 + C \left(\fint_{Q_1} |f|^p \right)^{1/p},
\endaligned
$$
 where we have used \eqref{2.2-0} for the last inequality.
 Hence,
 $$
 \aligned
  |\nabla u (0)|
 &\le |\nabla (u-v)(0)| +  |\nabla v(0)|\\
 &\le C \left(\fint_{Q_R\cap \omega} |\nabla (u-v)|^2\right)^{1/2}
 + C R \left(\fint_{Q_R} |f|^p \right)^{1/p}\\
 & \le C \left(\fint_{Q_R\cap \omega} |\nabla u|^2\right)^{1/2}
 +C R \left(\fint_{Q_R} |f|^p \right)^{1/p}.
 \endaligned
 $$
 
 \noindent{\bf Case 4.}
 Assume $\delta=0$.
 As in Case 2,  this follows from Case 3 by using  the  approximation in Lemma \ref{lemma-app-2}.
 We omit the details.
\end{proof} 
 
 \bibliographystyle{amsplain}
 
\bibliography{S20202.bbl}

\bigskip

\begin{flushleft}

Zhongwei Shen,
Department of Mathematics,
University of Kentucky,
Lexington, Kentucky 40506,
USA.

E-mail: zshen2@uky.edu
\end{flushleft}

\bigskip

\end{document}